\pgfplotsset{compat=1.5.1}
\numberwithin{equation}{section}
\renewcommand{\maketitle}{
 {\centering\textbf{\thetitle}\par
  \vspace{0.5em}
  \centering{\theauthor}\par 
  \vspace{2em}
 }
}
\newtheorem{theorem}[equation]{Theorem}
\newtheorem{proposition}[equation]{Proposition}
\newtheorem{corollary}[equation]{Corollary}
\newtheorem{lemma}[equation]{Lemma}
\theoremstyle{definition}
\newtheorem{example}[equation]{Example}
\newtheorem{definition}[equation]{Definition}
\newcommand{\Holder}{H\"{o}lder}
\newcommand{\R}{\mathbb{R}}
\newcommand{\N}{\mathbb{N}}
\newcommand{\metric}{\mathsf{d}}
\newcommand{\nbar}{|\!|}
\newcommand{\intd}{\,\mathrm{d}}
\newcommand{\entropy}{\mathsf{h}}
\newcommand{\press}{\mathsf{P}}
\newcommand{\define}[1]{\textbf{#1}}
\renewcommand{\emptyset}{\varnothing}
\newcommand{\floor}[1]{\lfloor {#1} \rfloor}
\renewcommand{\intd}{\,\mathsf{d}}
\newcommand{\vertex}[1]{[#1]}
\newcommand{\svtx}[1]{\scriptstyle{[#1]}}
\newcommand{\ceil}[1]{\lceil #1 \rceil}
\newcommand{\foll}{\mathsf{F}}
\newcommand{\lang}{\mathcal{L}}
\newcommand{\good}{\mathcal{G}}
\newcommand{\suff}{\mathcal{S}}
\renewcommand{\pref}{\mathcal{P}}
\newcommand{\diag}{\mathsf{D}}
\newcommand{\vtx}{\mathsf{vtx}}
\newcommand{\pth}{\mathsf{pth}}
\title{Unique equilibrium states for some intermediate beta transformations}
\author{Leonard Carapezza, Marco López and Donald Robertson}
\begin{document}

\maketitle

\begin{abstract}
We prove uniqueness of equilibrium states for subshifts corresponding to intermediate beta transformations with $\beta > 2$ having the property that the orbit of 0 is bounded away from 1.
\end{abstract}

\section{Introduction}
\label{sec:introduction}

A \define{topological dynamical system} is any pair $(X,T)$ where $X$ is a compact metric space and $T : X \to X$ is a continuous map.
Often there are many Borel probability measures on $X$ that are $T$ invariant and ergodic.
One way to choose a specific $T$ invariant probability measure with which to analyze the statistics of a system is to attempt to maximize the measure-theoretic entropy $\nu \mapsto \entropy(T,\nu)$.
More generally, given $\phi : X \to \R$ measurable one can choose an invariant measure by attempting to maximize
\begin{equation}
\label{eqn:freeEnergy}
\nu \mapsto \int \phi \intd \nu + \entropy(T,\nu)
\end{equation}
over all $T$ invariant probabilities.
When $T$ is expansive and $\phi$ is continuous this can always be done in at least one way~\cite[Page~224]{MR648108}.
Measures maximizing \eqref{eqn:freeEnergy} are called \define{equilibrium states} of $T$ for the function $\phi$.

Here we are interested in uniqueness of equilibrium states for a class of topological systems coming from generalizations of the beta transformations $F(x) = \beta x \bmod 1$ on $[0,1)$ for $\beta > 1$.
Given $0 \le \alpha < 1$ and $\beta > 1$ we consider the map $F : [0,1) \to [0,1)$ defined by
\begin{equation}
\label{eqn:alphaBetaMap}
F(x) = \beta x + \alpha - \floor{\beta x + \alpha} = \beta x + \alpha \bmod 1
\end{equation}
for all $0 \le x < 1$.
Such maps are called \define{alpha--beta transformations} or \define{intermediate beta transformations} and have been studied extensively~\cite{MR0097374,MR0142719,MR0466492,MR1027793,MR1452189,MR3046278,2017arXiv170908035L,faller_point_2009}.
In this paper we focus on intermediate beta transformations with $\beta > 2$.

Put $\ell = \ceil{\alpha + \beta} - 1$.
Using the partition
\begin{equation}
\label{eqn:alphaBetaPartition}
J_0 = \left[ 0, \frac{1-\alpha}{\beta} \right),
J_1 = \left[ \frac{1-\alpha}{\beta}, \frac{2-\alpha}{\beta} \right),
\dots,
J_\ell = \left[ \frac{\ell - \alpha}{\beta}, 1 \right)
\end{equation}
of $[0,1)$ one can associate to any intermediate beta transformation $F$ a subshift $\Sigma$ of $\{ 0,\dots, \ell \}^\N$.
We call such subshifts \define{intermediate beta shifts}.
To describe our main result denote by $a$ and $b$ respectively the infimum and supremum of $\Sigma$ with respect to the lexicographic order on $\{ 0, \dots, \ell \}^\N$.
We will make repeated reference to the sets
\begin{align*}
\diag(a) &= \{ n \in \N : (b_1,\dots,b_n) = (a_j,\dots,a_{j+n-1}) \textup{ for some } j \in \N \} \\
\diag(b) &= \{ n \in \N : (a_1,\dots,a_n) = (b_j,\dots,b_{j+n-1}) \textup{ for some } j \in \N \}
\end{align*}
and will be particularly interested in whether they are bounded.
Our main result is the following.

\begin{theorem}
\label{thm:mainTheorem}
Fix $0 \le \alpha < 1$ and $\beta > 2$ and $\phi : \Sigma \to \R$ a \Holder{} continuous function.
If the set $\diag(a)$ is bounded then $T$ has a unique equilibrium state for the function $\phi$.
\end{theorem}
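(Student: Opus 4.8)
The plan is to verify the hypotheses of the Climenhaga--Thompson criterion for uniqueness of equilibrium states: once the language of $\Sigma$ admits a decomposition $\pref \cdot \good \cdot \suff$ into a prefix collection, a good core satisfying the specification property, and a suffix collection, with $\phi$ having the Bowen (bounded distortion) property on $\good$ and with the obstruction collections $\pref$ and $\suff$ carrying strictly smaller pressure than $\Sigma$ itself, uniqueness follows. The combinatorial engine throughout is the order characterization of $\Sigma$, namely that $x \in \Sigma$ exactly when $a \preceq \sigma^n x \preceq b$ for all $n$; the only failures of free concatenation occur when a word is pinned against the lower extremal sequence $a$ or the upper extremal sequence $b$.

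First I would exploit the hypothesis $\beta > 2$. Since $\ell = \ceil{\alpha+\beta}-1 \ge 2$ while $a_1 = 0$ and $b_1 = \ell$, there is an interior symbol $c$ with $a_1 < c < b_1$. Prepending $c$ to any admissible word makes the shift beginning at $c$ strictly interior at its first coordinate, hence admissible no matter what follows. Taking $\good$ to be the admissible words none of whose suffixes is a prefix of $a$ or of $b$, any finite list of such words can then be glued through single copies of $c$: the shifts straddling each junction are already decided strictly inside $(a,b)$ within the preceding word, so $\good$ has the specification property with gap one. The Bowen property on $\good$ is immediate, since bounded distortion of Birkhoff sums over a matched cylinder is precisely the \Holder{} estimate for $\phi$.

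The decomposition then records, for each word, the boundary-pinned material it carries at its two ends, placing the pinned prefix in $\pref$ and the pinned suffix in $\suff$. The crux is the pressure gap $\mathsf{P}(\pref \cup \suff, \phi) < \mathsf{P}(\phi)$, and it is here that boundedness of $\diag(a)$ enters. A boundary-pinned word alternates between tracking $b$ from above and $a$ from below, and each switch from the upper to the lower boundary forces a prefix of $b$ to occur as a factor of $a$; thus $\diag(a)$ bounded caps the length of any such stretch, the pinned words fail to branch, and the subsystem they generate carries zero topological entropy. (In the greedy case $\alpha = 0$ one has $a = 0^\infty$, $\diag(a) = \emptyset$, and $\suff$ degenerates to the prefixes of the expansion of $1$, recovering the classical picture.) Since this obstruction subsystem has zero entropy, its pressure equals the largest $\phi$-average of an invariant measure supported on it; shadowing such an extremal measure by core orbits, whose free concatenation under the specification property contributes entropy at least $\log 2$ while barely perturbing the $\phi$-average, produces an invariant measure of strictly greater free energy and hence the desired strict inequality.

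The step I expect to be the genuine obstacle is this pressure estimate in the authentically two-boundary regime $\alpha > 0$: it is the possible resonance of $a$ and $b$, exactly what the diagonal sets quantify, that could inflate the obstruction collection to exponential size and destroy the gap. Converting ``$\diag(a)$ bounded'' into subexponential growth of the pinned words, and then into the strict inequality of pressures, is where the careful combinatorial bookkeeping lies, and I would organize it through the follower-set graph of $\Sigma$. By comparison, the specification property on $\good$ and the existence of the decomposition should be comparatively routine once the interior symbol $c$ is in hand.
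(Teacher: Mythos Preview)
Your outline correctly targets the Climenhaga--Thompson criterion and correctly flags the pressure gap as the crux, but the argument you give for that gap does not go through, and the role you assign to boundedness of $\diag(a)$ is not the one it actually plays.

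On the decomposition: the paper takes $\pref$ trivial and $\suff = \{ b_1^n : n \ge 0 \}$, so $\good$ consists of words with $k_2(w)=0$ (no suffix of $w$ is a prefix of $b$). Your $\good$ is smaller, requiring $k_1(w)=k_2(w)=0$, hence your obstruction set is larger. More importantly, the hypothesis that $\diag(a)$ is bounded is used in the paper for \emph{specification of $\good(M)$} (Proposition~\ref{prop:specset}): for $K>L:=\max\diag(a)$ there is a unique vertex in the graph with second coordinate $K$, so every word in $\good(M)$ can be routed through $\vtx(b_1^M)$ and then back to $[0,0]$ in uniformly bounded time. Your gap-one gluing of $\good$ via an interior symbol is fine but does not by itself yield specification of $\good(M)$, which is what Theorem~\ref{thm:use} requires.

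The substantive error is the claim that the pinned words ``fail to branch'' and generate a zero-entropy subsystem. In the follower-set graph, a path avoiding $[0,0]$ still has, at every non-diagonal vertex $[j,k]$ with $a_{j+1}<b_{k+1}$, two outgoing edges (to $[j+1,0]$ and $[0,k+1]$); branching is suppressed only along diagonals. Your mechanism---``each switch from the upper to the lower boundary forces a prefix of $b$ to occur as a factor of $a$''---controls only diagonals entered from the $a$-axis (those with $j>k$); diagonals entered from the $b$-axis are governed by $\diag(b)$, which may be unbounded under the hypothesis of the theorem. So the pinned collection can carry entropy up to $\log 2$, and zero entropy is simply false in general.

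Even with the paper's much smaller $\suff$ (one word per length, so growth is trivially subexponential), the inequality $\limsup_n \tfrac{1}{n}(S_n\phi)(b) < \press(\phi,\lang)$ is not a soft consequence of ``shadowing by core orbits''; indeed $\Sigma$ itself need not have specification, so Corollary~\ref{cor:positiveentropy} cannot be applied directly to $b$. The paper obtains the gap by constructing auxiliary sequences $c$ and $d$ (see \eqref{eqn:cseq} and \eqref{eqn:dseq}) which track $b$ closely but are confined to subshifts of $\Sigma$ that \emph{do} have specification (Corollary~\ref{cor:specset}, Lemmas~\ref{lem:dspec} and~\ref{lem:cspec}); Corollary~\ref{cor:positiveentropy} then gives a definite gap for $c$ or $d$, and a quantitative counting argument---editing $b$ in many places, bounding the change in Birkhoff sums via the Bowen constant, and in case~\ref{b:manyDiags} invoking the binomial estimate of Lemma~\ref{lem:logbinomial}---transfers that gap to $b$. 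Your shadowing heuristic points in this direction, but the case split \ref{b:flats}/\ref{b:manyDiags} and the construction of $c$ and $d$ are the actual content of the proof, and without them the pressure gap remains open.
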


The hypothesis $\beta > 2$ is related to transitivity.
One says that $F$ is transitive if every open subinterval $J \subset [0,1)$ has associated to it some $m \in \N$ such that the sets $J,F(J),\dots,F^m(J)$ cover $[0,1)$.
The parameters $\alpha,\beta$ for which the intermediate beta transformation is transitive have been determined: in \cite{palmer_thesis,MR1027793} it is shown for $\alpha + \beta < 2$ that non-transitivity takes place in a countable union of connected regions in the $(\alpha,\beta)$ plane.
In particular, for $\alpha + \beta < 2$ the condition $\beta > \sqrt{2}$ implies transitivity.
For $\alpha + \beta \ge 2$ and $\beta < 2$ there is a further non-transitive region bounded by the curves $\alpha\beta = 1$ and $\beta(\alpha + \beta - 2) = \beta-1$.
The assumption $\beta > 2$ avoids all these regions and is sufficient for our goal (see Proposition~\ref{prop:rts}).
See \cite{MR602242} for related work on the support of the unique measure of maximal entropy.

Equilibrium states have long been a topic of interest and their uniqueness and statistical properties have been studied in various settings.
We briefly mention some of the previous work done in similar settings.
In \cite{MR1074766} the authors establish uniqueness for a collection of interval maps that includes intermediate beta transformations but require the additional assumption on the potential $\phi$ that $\sup \phi < \press(\phi)$. This condition is referred to as ``small potentials'' in \cite{buzzi_number_1995}, where the author also relaxes the hypothesis of the main result in \cite{MR1074766} to include maps with homtervals and discontinuous potentials.

The consequences of having unique equilibrium states have been studied by many authors.
In \cite{MR1837214, MR1994883} multifractal results are given for functions coming from a subspace for which equilibrium states are unique.
If there exists a dense subspace of continuous functions with unique equilibrium states, then \cite{MR3001770} gives  multifractal results that apply to all continuous functions, \cite{MR3568724} shows that every invariant measure can be arbitrarily well approximated in the weak star topology and in entropy by measures that are unique equilibrium states, \cite{MR3628919} shows that the graph of the entropy map restricted to ergodic measures is dense in the graph of the entropy map, and \cite{MR1025756,MR3568724,MR3628919} show that large deviation principles are obtained.
The space of \Holder{} continuous functions on $\Sigma$ is dense in the space of all continuous functions on $\Sigma$ so these results apply in particular to intermediate beta transformations.
We refer the reader to \cite[Section~4]{MR3046278} for an explanation of how, mutatis mutandis, these results for intermediate beta shifts imply results for intermediate beta transformations.

Our proof of Theorem~\ref{thm:mainTheorem} makes use of \cite[Theorem~C]{MR3046278}, which gives some sufficient conditions for a subshift to have unique equilibrium states with respect to a potential.
This is discussed further in Section~\ref{sec:preliminaries}.

If the subshift $\Sigma$ has specification (see Section~\ref{subsec:spec} for the definition) then the conclusion of Theorem~\ref{thm:mainTheorem} is an immediate consequence of \cite{MR0399413}.
We give in Section~\ref{sec:examples} an example of $0 \le \alpha < 1$ and $\beta > 2$ such that the associated subshift $\Sigma$ satisfies the hypothesis of Theorem~\ref{thm:mainTheorem} but does not have specification.
In doing so the following characterization of specification for intermediate beta transformations with $\beta > 2$, proved in Section~\ref{sec:intBetaTransformation}, will be used.

\begin{theorem}
\label{thm:specChar}
Fix $0 \le \alpha < 1$ and $\beta > 2$.
The subshift $\Sigma$ has specification if and only if the sets $\diag(a)$ and $\diag(b)$ are both bounded.
\end{theorem}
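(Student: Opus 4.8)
The plan rests on the lexicographic description of the language. Writing $\preceq$ for the lexicographic order and $T$ for the shift, a word $w = w_0\cdots w_{n-1}$ lies in $\lang(\Sigma)$ exactly when each of its suffixes is squeezed between the corresponding prefixes of $a$ and $b$, that is $(a_0,\dots,a_{n-1-i}) \preceq (w_i,\dots,w_{n-1}) \preceq (b_0,\dots,b_{n-1-i})$ for all $0 \le i < n$. A subshift has specification precisely when there is an $N$ such that any two words of $\lang(\Sigma)$ can be joined by a connecting word of length at most $N$ into a word of $\lang(\Sigma)$. Both implications are controlled by the \emph{tight} suffixes of a word --- those equal to a prefix of $a$ (lower-tight) or to a prefix of $b$ (upper-tight) --- since it is exactly these that constrain admissible continuations, and the sets $\diag(a),\diag(b)$ record the largest length at which a prefix of one of $a,b$ can sit tightly inside the other.

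Necessity (specification $\Rightarrow$ both bounded) I would prove by contraposition, treating the two sets symmetrically; say $\diag(b)$ is unbounded. Fix $N$ and pick $n \in \diag(b)$ with $n > N$, so $(a_0,\dots,a_{n-1}) = (b_j,\dots,b_{j+n-1})$ for some $j$. Let $u = (b_0,\dots,b_{j-1})$, which is a prefix of $b$ and hence admissible. Its full-length suffix is then a prefix of $b$, so any admissible extension must stay $\preceq (b_j,b_{j+1},\dots) = (a_0,a_1,\dots)$, while the newly started suffix must stay $\succeq (a_0,a_1,\dots)$; the two bounds pinch together and force the next $n$ symbols after $u$ to be exactly $a_0,\dots,a_{n-1}$. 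Consequently, if $uwv \in \lang(\Sigma)$ with $|w| = t \le N < n$, then $v$ must begin with the tail $a_t\cdots a_{n-1}$ of this forced block. At most $(N+1)(\ell+1)^{N}$ words of length $n$ arise this way, a bound independent of $n$, whereas the number of admissible words of length $n$ grows exponentially (the topological entropy being $\log\beta > 0$); hence for $n$ large there is an admissible $v$ of length $n$ avoiding all of these tails, and then $uwv \notin \lang(\Sigma)$ for every $w$ with $|w| \le N$. As $N$ was arbitrary, specification fails. The case $\diag(a)$ unbounded is identical with the roles of $a$ and $b$ (and of the upper and lower bounds) interchanged.

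Sufficiency (both bounded $\Rightarrow$ specification) is where the real work lies, and I would organise it around the pair $(p,q)$ recording the lengths of the longest upper- and lower-tight suffixes of the current word. The key structural fact is that $\min(p,q) \le M$ always holds when $\diag(a),\diag(b) \le M$: if, say, $q \le p$, then the last $q$ symbols are simultaneously the prefix $a_0\cdots a_{q-1}$ and the block $b_{p-q}\cdots b_{p-1}$, so $q \in \diag(b) \le M$, and symmetrically when $p \le q$. From this I would deduce that \emph{forced} steps --- those where only one symbol may be appended --- cannot persist: a genuinely forced step advances the active upper and lower tight lengths in lock-step, which is incompatible with $\min(p,q) \le M$ for more than $M$ consecutive steps. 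Thus from any admissible $u$ one always reaches, within boundedly many appended symbols, a position of genuine choice, and by repeatedly electing to drop strictly below the upper constraint and to rise strictly above the lower one I would steer the word in boundedly many steps to a \emph{free} word, one with no tight suffix of length exceeding a fixed constant; after a free word every admissible $v$ may be appended. Concatenating the bounded descent with $v$ yields a uniform connecting length $N$, hence specification.

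The main obstacle is this last sufficiency step, because tightness is governed by \emph{all} tight suffixes at once, not merely the longest: appending a symbol that strictly undercuts the longest upper-tight suffix can still leave, or even create, shorter tight suffixes, and the rate at which the longest tight length decreases is tied to the self-overlaps (periods) of $a$ and $b$ rather than directly to $\diag(a),\diag(b)$. The delicate point is therefore to show that, under $\diag(a),\diag(b) \le M$, one can always simultaneously neutralise the upper and lower constraints in boundedly many steps and then launch an \emph{arbitrary} admissible $v$. I would isolate this as a lemma on the joint dynamics of the two prefix-matching automata of $a$ and $b$, using $\min(p,q) \le M$ to guarantee that a free (synchronising) configuration recurs with bounded gaps, and then invoke the standard fact that bounded-gap occurrences of a synchronising word yield specification.
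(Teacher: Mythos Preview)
Your necessity argument is correct and takes a somewhat different route from the paper. The paper works through its graph $\Gamma$: when $\diag(a)$ is unbounded it locates in $\pth(a)$ a diagonal of length $K > \tau$ beginning at some $\vertex{q,0}$, observes that vertices on a diagonal have a unique outgoing edge, and concludes that after $w = a_1^q$ the next $\tau$ symbols are forced to be $b_1^\tau$ with $b_{\tau+1}$ forced thereafter, so any $v$ whose first letter differs from $b_{\tau+1}$ witnesses failure of $\tau$-specification. Your version avoids the graph: you force the $n > N$ symbols following a suitable prefix of $b$ via lexicographic pinching, then count that at most $(N+1)(\ell+1)^N$ words of length $n$ can serve as $v$ while $|\lang_n|$ grows like $\beta^n$, so for large $n$ a bad $v$ exists. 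Both are sound; yours is a shade more elementary, the paper's more constructive.

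The sufficiency direction, however, has a genuine gap, which you have correctly located but not closed. The observation $\min(p,q) \le M$ is exactly the structural fact the paper uses, and the bound on forced runs follows from it. But your strategy of simultaneously dropping strictly below the upper bound and rising strictly above the lower one amounts to finding a reset symbol strictly between $a_{q+1}$ and $b_{p+1}$, and no such symbol need exist at the first non-forced vertex you reach, nor at any vertex reachable in time bounded by $M$ alone; the ``joint prefix automaton'' lemma you propose is left unproved. More seriously, you never invoke $\beta > 2$, and the paper uses it essentially in this direction. The paper's idea runs opposite to yours: rather than trying to decrease both tight lengths, it \emph{increases} one of them past a threshold so as to funnel through a single vertex. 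One always has $q \le N := \max\diag(b)$ or $p \le L := \max\diag(a)$; in the first case appending $a_{q+1}\cdots a_{N+1}$ drives $q$ to $N+1$, and because $N+1 \notin \diag(b)$ there is a \emph{unique} state with lower-tight length $N+1$, so every such word arrives in at most $N+1$ steps at one fixed vertex (and symmetrically at a second fixed vertex in the other case). From each of those two vertices a path to $\vertex{0,0}$ is supplied by Corollary~\ref{cor:graphrts}, whose proof (via Proposition~\ref{prop:rts}) is precisely where $\beta > 2$ enters: one needs that iterates of $F$ eventually blow up every subinterval to all of $[0,1)$. Concatenating gives the uniform gap $\tau$. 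Your outline is missing both the funneling trick and the $\beta > 2$ input that terminates it.
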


We also prove in Section~\ref{sec:examples} that, in a certain sense, the hypothesis of Theorem~\ref{thm:mainTheorem} is often true.

\begin{theorem}
\label{thm:denseness}
For every $\beta > 2$ there is a dense set of $0 \le \alpha < 1$ such that $\diag(a)$ is bounded.
\end{theorem}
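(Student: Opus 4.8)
The plan is to translate the symbolic hypothesis into a statement about the forward orbit of $0$ under $F$ and then to produce, arbitrarily close to any given parameter, a parameter for which that orbit is finite. Fix $\beta > 2$, let $F = F_{\alpha,\beta}$ be the map \eqref{eqn:alphaBetaMap}, and write $g_n(\alpha) = F^n(0)$ for the $n$-th point in the orbit of $0$, keeping the dependence on $\alpha$ explicit. First I would record the dictionary between $\diag(a)$ and this orbit: the factor $(a_j,\dots,a_{j+n-1})$ of $a$ is the length-$n$ prefix of the itinerary of $g_{j-1}(\alpha)$, so $n \in \diag(a)$ exactly when some $g_{j-1}(\alpha)$ lies in the cylinder $[b_1\cdots b_n]$. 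Since $b$ is the lexicographic supremum of $\Sigma$, this is the rightmost cylinder of rank $n$; as $F^n$ expands it affinely by the factor $\beta^n$ onto a subinterval of $[0,1)$, the cylinder is contained in $(1-\beta^{-n},1)$. Consequently, if $g_k(\alpha) \le 1-\delta$ for all $k$ and some $\delta>0$, then $n\in\diag(a)$ forces $\beta^{-n}>\delta$, whence $\diag(a) \subseteq \{\, n : n < \log_\beta(1/\delta)\,\}$ is bounded. (The itinerary conventions of Section~\ref{sec:intBetaTransformation} make this correspondence precise even when the orbit meets a partition boundary.) It therefore suffices to exhibit, in every nonempty open subinterval $I \subseteq [0,1)$, a parameter $\alpha$ whose orbit of $0$ is finite, since such an orbit is automatically bounded away from $1$.

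To find such $\alpha$ I would arrange for the orbit of $0$ to land on the fixed point of $F$. A direct computation shows $x^\ast = \frac{1-\alpha}{\beta-1}$ satisfies $F(x^\ast)=x^\ast$, with $0 \le x^\ast \le \frac{1}{\beta-1} < 1$ because $\beta>2$; thus $x^\ast$ lies in the interior of $[0,1)$ for every $\alpha$. If $g_n(\alpha)=x^\ast$ for some $n$, then $g_m(\alpha)=x^\ast$ for all $m\ge n$, so the orbit of $0$ is the finite set $\{g_0(\alpha),\dots,g_{n-1}(\alpha),x^\ast\}$. The key analytic input is that on each maximal interval of continuity $g_n$ is affine with slope $s_n = 1+\beta+\cdots+\beta^{n-1} = \frac{\beta^n-1}{\beta-1}\to\infty$. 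Call a maximal subinterval on which $g_{n-1}$ is affine and $\beta g_{n-1}(\alpha)+\alpha$ stays between two consecutive integers a \emph{full piece}; on it the fractional part $g_n(\alpha)=\{\beta g_{n-1}(\alpha)+\alpha\}$ increases continuously from $0$ to $1$. There the affine function $g_n(\alpha)-x^\ast(\alpha)$, of positive slope $s_n + \frac{1}{\beta-1}$, runs from a negative value (near the left end, where $g_n\approx 0 < x^\ast$) to a positive value (near the right end, where $g_n\approx 1 > x^\ast$), so by the intermediate value theorem it vanishes at some interior $\alpha^\ast$, which then has a finite orbit of $0$.

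The main obstacle is to guarantee that a full piece lies inside the prescribed interval $I$, and this is where I expect the only real work, together with the role of $\beta>2$. The breakpoints of $g_n$ are the integer crossings of $\beta g_{k-1}(\alpha)+\alpha$ for $1\le k\le n$. The top-level crossings ($k=n$) have slope $s_n$, hence are spaced $\sim 1/s_n$ apart and number $\sim |I|s_n$ in $I$; the lower-level crossings ($k<n$) number at most a constant multiple of $|I|\sum_{k<n}s_k \sim |I|\,\beta^n/(\beta-1)^2$, which is a fraction $\sim 1/(\beta-1)$ of the former. Since $\beta>2$ gives $\beta-1>1$, the top-level crossings strictly outnumber the lower-level ones for all large $n$, so by pigeonhole some pair of consecutive top-level crossings has no lower-level crossing between them; the interval they bound is a full piece contained in $I$. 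Choosing $n$ large then yields $\alpha^\ast\in I$ with a finite orbit of $0$, completing the density argument; what remains is to make the breakpoint-spacing and counting estimates of this paragraph rigorous.
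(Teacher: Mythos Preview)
Your reduction to ``the forward orbit of $0$ is finite'' is exactly what the paper does, but from there the two arguments diverge. The paper never looks for full pieces or for the fixed point. Instead it perturbs directly: if the orbit of $0$ under $F_\alpha$ is not already bounded away from $1$, choose a record time $m$ with $F^m(0)$ closer to $1$ than at any earlier time; then $F^{m-1}(0)$ sits just to the left of a discontinuity of $F$. Increasing $\alpha$ to $\alpha+\epsilon$ slides every discontinuity left by $\epsilon/\beta$ while sliding $(F')^{n}(0)$ right by $\epsilon s_n$, and the record property forces the first collision (as $\epsilon$ grows from $0$) to occur at index $m-1$; at that instant $(F')^{m}(0)=0$ and the orbit is periodic. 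No counting and no intermediate-value step is needed, so the paper's argument is shorter.

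Your route is also viable, but the pigeonhole paragraph you flag as ``remaining'' is genuinely the crux, and the heuristic you give is too optimistic. Writing $D_k$ for the number of discontinuities of $g_k$ in $I$, the honest bound on level-$k$ crossings is $C_k \le s_k|I| + D_{k-1}+1$ (one loses up to one crossing per continuity piece of $g_{k-1}$), so the recursion is $D_k \le 2D_{k-1} + s_k|I|+1$; solving gives $D_{n-1}$ of order $|I|\sum_{k<n}2^{n-1-k}s_k$, and comparing with $s_n|I|$ then forces $\beta>4$, not $\beta>2$. The clean repair is not a sharper count but an expansion argument in the spirit of Proposition~\ref{prop:rts}: track nested continuity intervals $J_1\supset J_2\supset\cdots$ of $g_1,g_2,\dots$ inside $I$, always choosing the longer of the at most two sub-pieces when no full piece appears. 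Then $s_{k+1}|J_{k+1}| \ge \tfrac{1}{2}s_{k+1}|J_k| > \tfrac{\beta}{2}\,s_k|J_k|$, so the range $s_k|J_k|$ grows geometrically (since $\beta>2$) and must exceed $1$ in finitely many steps, producing a full piece inside $I$. With this in place your IVT argument against $x^\ast=(1-\alpha)/(\beta-1)$ goes through for every $\beta>2$.
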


In the case where $\alpha=0$ the subshift $\Sigma$ has specification if and only if the set $\diag(b)$ is bounded. This result, as well as results regarding the size of the set of parameters $\beta$ such that the corresponding shift has specification (or other properties) are in \cite{MR1452189}. 

The remainder of the paper runs as follows.
We present in Section~\ref{sec:preliminaries} some background on subshifts and topological pressure.
In Section~\ref{sec:intBetaTransformation} we describe subshifts associated to intermediate beta transformations in terms of infinite graphs and prove Theorem~\ref{thm:specChar}.
The proof of Theorem~\ref{thm:mainTheorem} is given in Section~\ref{sec:proof}.
Finally, in Section~\ref{sec:examples}, we prove Theorem~\ref{thm:denseness} and give an example showing that the hypothesis of Theorem~\ref{thm:mainTheorem} is broader than specification.

This project began under the supervision of Vaughn Climenhaga at the 2017 AMS Mathematics Research Community ``Dynamical Systems: Smooth, Symbolic, and Measurable''.
We would like to thank the AMS for the opportunity to attend the MRC, and to thank Jon Chaika, Vaughn Climenhaga and Daniel Thompson for many useful and engaging conversations related to this project both during and after the MRC.
We would also like to thank the AMS for supporting travel costs after the MRC, which allowed the authors to collaborate in person at the 2018 Joint Mathematics Meeting and at the University of Utah.
M. López would like thank Tony Samuel for helpful discussions regarding intermediate beta transformations.
D.\ Robertson was supported by NSF grant DMS-1703597 during the preparation of this work.

\section{Preliminaries}
\label{sec:preliminaries}

We recall in this section some standard material on subshifts and pressure that we will make use of below.
More details can be found in \cite{MR648108}.

\subsection{Subshifts}

Equip $\{0,\dots,\ell\}^\N$ with the product topology and the standard metric $\metric$ defined by
\[
\metric(x,y) = \inf \left\{ \frac{1}{2^j} : (x_1,\dots,x_j) = (y_1,\dots,y_j) \right\} \cup \{ 1 \}
\]
for all $x, y \in \{0,\dots,\ell\}^\N$.
The \define{lexicographic order} on $\{0,\dots,\ell\}^\N$ is denoted $\preceq$ and defined for all $x,y$ therein by $x \preceq y$ if and only if there is $k \in \N$ with $x_1 = y_1,\dots,x_k = y_k$ and $x_{k+1} \le y_{k+1}$.
The product, metric and lexicographic topologies on $\{0,\dots,\ell\}^\N$ all agree.

Write $\sigma$ for the shift map $(\sigma x)_n = x_{n+1}$ on $\{0,\dots,\ell \}^\N$.
A closed set $\Sigma \subset \{ 0, \dots, \ell \}^\N$ is a \define{subshift} if $\sigma(\Sigma) \subset \Sigma$.
A sequence $w : \{1,\dots,n\} \to \{ 0,\dots,\ell\}$ is a \define{word} of a subshift $\Sigma$ if there is $x \in \Sigma$ with $x_i = w_i$ for all $1 \le i \le n$.
The \define{length} of a word $w$ is the cardinality of its domain and denoted $|w|$.

The \define{language} of a subshift $\Sigma$ is the set $\lang$ of all its words.
Every word $w \in \lang$ defines the \define{cylinder}
\[
[w] = \{ x \in \Sigma : (x_1,\dots,x_n) = (w_1,\dots,w_n) \}
\]
of sequences that begin in agreement with $w$.
For any word $w$ in $\lang$ and any $1 \le i \le j \le |w|$ we write both $w_i^j$ and $(w_i,\dots,w_j)$ for the word obtained by restricting $w$ to $\{i,\dots,j\}$.
Similarly, for any $1 \le i \le j$ denote by $x_i^j$ and $(x_i,\dots,x_j)$ the word obtained by restricting any sequence $x$ to $\{i,\dots,j\}$. 
Words can be concatenated as follows: given words $w,v$ denote by $wv$ the word $(w_1,\dots,w_{|w|},v_1,\dots,v_{|v|})$.
If $\mathcal{W},\mathcal{V}$ are subsets of $\lang$ write
\[
\mathcal{W} \mathcal{V} = \{ wv \in \lang : w \in \mathcal{W}, v \in \mathcal{V} \}
\]
for the set of all words in $\lang$ that can be written as a word from $\mathcal{W}$ concatenated with a word from $\mathcal{V}$.
Lastly, given $\mathcal{H} \subset \lang$ write $\mathcal{H}_n$ for the set of words of length $n$ in $\mathcal{H}$.

\subsection{Pressure}

Fix a subshift $\Sigma$.
Given $\phi : \Sigma \to \R$ and $n \in \N$ define
\[
(S_n \phi)(x) = \sum_{i=0}^{n-1} \phi(T^i x)
\]
for all $x \in \Sigma$.
We define
\[
\phi(w) = \sup \{ (S_{|w|} \phi)(x) : x \in [w] \}
\]
for all words $w$ in $\lang$.
Given $\mathcal{H} \subset \mathcal{L}$ the quantity
\[
\press(\phi,\mathcal{H})
=
\limsup_{n \to \infty} \frac{1}{n} \log \Lambda_n(\phi,\mathcal{H})
\]
is the \define{topological pressure} along $\phi$ of $\mathcal{H}$ where
\[
\Lambda_n(\phi,\mathcal{H})
=
\sum_{w \in \mathcal{H}_n} \exp( \phi(w) )
\]
for all $n \in \N$.
If $\phi = 0$ then $\Lambda_n(\phi,\mathcal{H})$ is the cardinality of $\mathcal{H}_n$ and $\press(0,\mathcal{H})$ is the exponential growth rate of $n \mapsto |\mathcal{H}_n|$.
In particular $\press(0,\lang)$ is the topological entropy of $\Sigma$.

When $\phi$ is continuous the variational principle~\cite[Theorem~9.10]{MR648108} states that
\begin{equation}
\label{eqn:variational}
\press(\phi) = \sup \left\{ \int \phi \intd \nu + \entropy(\sigma,\nu) : \nu \textup{ a } \sigma \textup{ invariant Borel probability on } \Sigma \right\}
\end{equation}
where $\entropy(\sigma,\nu)$ is the measure-theoretic entropy of $\nu$.
The function $\phi$ is referred to as a \define{potential}.
We will primarily be interested in potentials with the Bowen property.

\begin{definition}
A function $\phi : \Sigma \to \R$ has the \define{Bowen} property if there is $V > 0$ such that, for every $n \in \N$ and any $w \in \lang_n$, one has $|(S_n \phi)(x) - (S_n \phi)(y)| \le V$ for all $x,y \in [w]$.
\end{definition}

Every \Holder{} continuous function $\phi : \Sigma \to \R$ has the Bowen property~\cite[Section~2.2]{MR3046278}.
The following lemma will be useful later.

\begin{lemma}
\label{lem:sup}
Fix $x \in \Sigma$ and let $\mathcal{D} = \{ (x_1,\dots,x_n) : n \in \N \} \subset \lang$.
If $\phi : \Sigma \to \R$ has the Bowen property, then
\[
\press(\phi,\mathcal{D}) = \limsup_{n \to \infty}\frac{1}{n} (S_n\phi)(x)
\]
holds.
\end{lemma}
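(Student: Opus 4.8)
The plan is to reduce the pressure $\press(\phi,\mathcal{D})$ to a single term per length and then sandwich it between $\tfrac1n (S_n\phi)(x)$ and a correction of order $\tfrac1n$ coming from the Bowen property. First I would observe that $\mathcal{D}$ contains exactly one word of each length, namely the prefix $x_1^n = (x_1,\dots,x_n)$, so that $\mathcal{D}_n = \{x_1^n\}$ and hence $\Lambda_n(\phi,\mathcal{D}) = \exp(\phi(x_1^n))$. Unwinding the definition of $\press$, this already gives $\press(\phi,\mathcal{D}) = \limsup_{n\to\infty}\tfrac1n \phi(x_1^n)$, so the whole lemma reduces to comparing $\tfrac1n\phi(x_1^n)$ with $\tfrac1n (S_n\phi)(x)$ in the limit superior.

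For the inequality $\limsup_{n\to\infty}\tfrac1n(S_n\phi)(x) \le \press(\phi,\mathcal{D})$ I would simply note that $x$ itself lies in the cylinder $[x_1^n]$, so from the definition $\phi(x_1^n) = \sup\{(S_n\phi)(y) : y \in [x_1^n]\}$ we read off $(S_n\phi)(x) \le \phi(x_1^n)$; dividing by $n$ and taking $\limsup$ gives the claim. No hypothesis on $\phi$ is needed for this direction.

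For the reverse inequality I would invoke the Bowen property. Let $V$ be the associated constant. For any $y \in [x_1^n]$, applying the Bowen bound to the word $x_1^n$ and the two points $x, y \in [x_1^n]$ yields $(S_n\phi)(y) \le (S_n\phi)(x) + V$; taking the supremum over $y \in [x_1^n]$ gives $\phi(x_1^n) \le (S_n\phi)(x) + V$. Dividing by $n$ and taking $\limsup$, the term $V/n$ vanishes and I obtain $\press(\phi,\mathcal{D}) \le \limsup_{n\to\infty}\tfrac1n(S_n\phi)(x)$. Combining the two inequalities proves the lemma.

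The computation itself is routine; the only point requiring care---and the sole place the hypothesis is used---is this last step, where the Bowen property is precisely what makes the gap between the supremum $\phi(x_1^n)$ taken over the whole cylinder and the single value $(S_n\phi)(x)$ uniformly bounded in $n$, so that it is annihilated by the factor $\tfrac1n$. Without a uniform bound of this kind the two limits superior could in principle differ, so identifying the Bowen property as exactly the right uniform control is the crux of the argument.
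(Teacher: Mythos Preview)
Your proof is correct and follows essentially the same route as the paper: both reduce $\Lambda_n(\phi,\mathcal{D})$ to the single term $\exp(\phi(x_1^n))$ and then use the Bowen property to bound $|\phi(x_1^n) - (S_n\phi)(x)|$ by a constant, so that the discrepancy vanishes after dividing by $n$. The only cosmetic difference is that the paper handles both inequalities at once via the two-sided Bowen bound, whereas you observe that the lower bound $(S_n\phi)(x) \le \phi(x_1^n)$ is immediate from $x \in [x_1^n]$ and needs no hypothesis on $\phi$.
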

\begin{proof}
Put $w_n = (x_1,\dots,x_n)$.
Since $\phi$ has the Bowen property there is $V > 0$ such that
\[
|\phi(w_n) - (S_n \phi)(x)| \le V
\]
for all $n \in \N$.
Consequently
\[
\frac{1}{n} (S_n \phi)(x) - \frac{V}{n}
\le
\frac{1}{n} \phi(w_n)
\le
\frac{1}{n} (S_n \phi)(x) + \frac{V}{n}
\]
and, since $\phi(w_n) = \log \Lambda_n(\phi,\mathcal{D})$, applying the limsup gives the result.
\end{proof}

\subsection{Specification}
\label{subsec:spec}

Fix a subshift $\Sigma$.
A set $\good \subset \lang$ has \define{specification} when there is $\tau \in \N$ such that, for any words $w^0,\dots,w^n$ in $\good$ one can find words $v^1,\dots,v^n$ in $\lang_\tau$ such that $w^0 v^1 w^1 \cdots w^{n-1} v^n w^n$ belongs to $\lang$.

\begin{proposition}[cf.\ {\cite[Theorem 6.1]{cft18}}]\label{prop:positiveentropy}
Let $\Sigma$ be subshift having positive diameter with respect to $\metric$ such that $\lang$ has specification.
If $\phi : \Sigma \to \R$ is a continuous function with the Bowen property then every equilibrium state for $\phi$ has positive entropy.
\end{proposition}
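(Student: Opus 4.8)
The plan is to argue by contradiction, manufacturing from a hypothetical zero-entropy equilibrium state a family of admissible words so abundant that the pressure is forced to strictly exceed itself. The engine is the energy--entropy trade-off: sprinkling \emph{sparse} defects into a long orbit segment of the measure costs only a linear (in the defect density) amount of potential, yet because the \emph{locations} of the defects are free it contributes an entropy that is superlinear in that density, and superlinear beats linear for small densities.

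First I would reduce to the ergodic case. The entropy map $\nu \mapsto \entropy(\sigma,\nu)$ and the map $\nu \mapsto \int \phi \intd \nu$ are both affine, so in the ergodic decomposition of an equilibrium state almost every component is again an equilibrium state, and the entropy of the whole is the average of the components' entropies. Hence if some equilibrium state had zero entropy, so would some ergodic equilibrium state $\mu$, and it suffices to rule this out. For such a $\mu$ the variational principle \eqref{eqn:variational} forces $\int \phi \intd \mu = \press(\phi)$. Fixing a $\mu$-generic point $x$, the Birkhoff theorem gives $\tfrac{1}{N}(S_N \phi)(x) \to \int \phi \intd \mu = \press(\phi)$, and since $x \in [x_1^N]$ we get $\phi(x_1^N) \ge (S_N \phi)(x) = N\press(\phi) - o(N)$. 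Thus a single long word already carries essentially the full pressure.

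Now I would inject entropy cheaply. Positive diameter with respect to $\metric$ yields two distinct points, hence, for some length $q$, two distinct words in $\lang_q$; fix one of them, $r$, as an inserted \emph{defect}, and let $\tau$ be the constant from the specification property of $\lang$. Fix a small $\delta > 0$, put $m = \floor{\delta N}$, cut $x_1^N$ at $m$ well-separated, freely chosen sites, and reglue the pieces with copies of $r$ inserted; specification makes each resulting word $u$ admissible, of common length $N' = N + m(q + 2\tau) = N(1 + (q+2\tau)\delta)$. Transferring the Birkhoff sums along the bulk pieces via the Bowen property and bounding the inserted parts by $\|\phi\|_\infty$ gives $\phi(u) \ge (S_N \phi)(x) - (m+1)V - m(q+2\tau)\|\phi\|_\infty \ge N\press(\phi) - o(N) - C_1 \delta N$ for a constant $C_1$. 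Arranging (by spacing the sites and keeping $r$ locally recognizable) that distinct site-sets give distinct words, the number of such $u$ is at least $\binom{\floor{cN}}{m} = \exp(cN\,H(\delta/c) + o(N))$ for some $c > 0$, where $H(t) = -t\log t - (1-t)\log(1-t)$. Since $\press(\phi) = \press(\phi,\lang) \ge \limsup_{N'} \tfrac{1}{N'}\log \Lambda_{N'}(\phi,\lang)$ and every constructed $u$ contributes $e^{\phi(u)}$ to $\Lambda_{N'}$, letting $N \to \infty$ yields, for every small $\delta$,
\[
\press(\phi) \ \ge\ \frac{\press(\phi) - C_1\delta + c\,H(\delta/c)}{1 + (q+2\tau)\delta}.
\]
Rearranging leaves $c\,H(\delta/c) \le C_2\,\delta$ for a constant $C_2 = C_1 + (q+2\tau)\press(\phi)$, so $H(\delta/c)/\delta$ stays bounded as $\delta \downarrow 0$; but $H(t)/t \to +\infty$ as $t \downarrow 0$, a contradiction. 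Hence no ergodic---and therefore no---equilibrium state has zero entropy.

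The main obstacle is exactly this trade-off. A naive construction injecting a fixed bit per inserted block also pays a fixed amount of potential per block, and fails whenever $\|\phi\|_\infty$ is large; the decisive point is that the defects must be sparse, so the potential loss is only $O(\delta)$ while the combinatorial freedom in the defect \emph{positions} supplies entropy of order $\delta \log(1/\delta)$. A secondary, purely bookkeeping obstacle is guaranteeing that distinct position-sets yield genuinely distinct admissible words; this is handled by separating the insertion sites and choosing the inserted word to be locally identifiable, which costs nothing in the asymptotics.
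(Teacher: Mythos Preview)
The paper does not prove this proposition; it is quoted from \cite[Theorem~6.1]{cft18}, so there is no in-paper argument to compare against.  Your overall strategy --- reducing to an ergodic zero-entropy equilibrium state, taking a $\mu$-generic point $x$ so that $\phi(x_1^N)\ge N\press(\phi)-o(N)$, and then manufacturing many admissible words via sparse insertions whose positional freedom yields entropy of order $\delta\log(1/\delta)$ against a potential loss of order $\delta$ --- is the correct mechanism, and your final inequality manipulation is sound.

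The genuine gap is the distinctness step.  You assert that ``spacing the sites and keeping $r$ locally recognizable'' forces the $\binom{\lfloor cN\rfloor}{m}$ site-sets to produce distinct words, but you never explain how to procure such an $r$, and in general none is available for free: the bridging words supplied by specification are completely uncontrolled, so a block bridge--$r$--bridge may coincide with a stretch of $x$ flanked by different bridges, and distinct site-sets can collapse to the same word.  The standard repair uses the zero-entropy hypothesis a second time.  Since $\entropy(\sigma,\mu)=0$, only $e^{o(q)}$ words of length $q$ occur in a $\mu$-generic $x$, while specification together with positive diameter gives $|\lang_q|\ge e^{cq}$ for some $c>0$; hence for $q$ large there is $r\in\lang_q$ that never occurs in $x$.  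With potential sites spaced farther than $4(q+\tau)$ apart, every occurrence of $r$ in a constructed word lies within a bounded window of exactly one insertion, so the site-set is recoverable from the word and the map is injective.  Alternatively one can abandon exact distinctness and absorb an explicit overcounting factor, much as the paper itself does with the $p^{kN}$ bound in its Case~\ref{b:manyDiags} proof of Theorem~\ref{thm:mainTheorem}.  Either way this step requires an argument; as written the count is unjustified.
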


\begin{corollary}
\label{cor:positiveentropy}
Let $\Sigma$ be a subshift with positive $\metric$ diameter such that language $\lang$ has specification.
Let $\phi : \Sigma \to \R$ be a continuous function with the Bowen property.
We have $\press(\phi,\mathcal{D}) < \press(\phi,\lang)$ for every $x \in \Sigma$ where $\mathcal{D} = \{ (x_1,\dots,x_n) : n \in \N \}$.
\end{corollary}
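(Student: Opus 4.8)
The plan is to show that $\press(\phi,\mathcal{D})$ is realized as $\int \phi \intd \nu$ for a single $\sigma$ invariant measure $\nu$, deduce the non-strict inequality $\press(\phi,\mathcal{D}) \le \press(\phi,\lang)$ from the variational principle, and then rule out equality using Proposition~\ref{prop:positiveentropy}. First I would apply Lemma~\ref{lem:sup}, whose hypothesis holds because $\phi$ has the Bowen property, to rewrite
\[
\press(\phi,\mathcal{D}) = \limsup_{n \to \infty} \frac{1}{n}(S_n \phi)(x).
\]
The point of this reformulation is that $\frac{1}{n}(S_n\phi)(x) = \int \phi \intd \mu_n$, where $\mu_n = \frac{1}{n}\sum_{i=0}^{n-1}\delta_{\sigma^i x}$ is the $n$th empirical measure along the orbit of $x$, so the quantity of interest is an integral of $\phi$ against measures on $\Sigma$.

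Next I would pass to a subsequence. Since $\Sigma$ is compact, the space of Borel probabilities on $\Sigma$ is compact in the weak-star topology, so there is a subsequence $n_k \to \infty$ along which $\frac{1}{n_k}(S_{n_k}\phi)(x)$ converges to $\press(\phi,\mathcal{D})$ and $\mu_{n_k}$ converges weak-star to some Borel probability $\nu$. A standard computation shows $\nu$ is $\sigma$ invariant: for continuous $g$ the difference $\int g \circ \sigma \intd \mu_n - \int g \intd \mu_n$ telescopes to $\frac{1}{n}\bigl(g(\sigma^n x) - g(x)\bigr)$, which tends to $0$. Because $\phi$ is continuous, weak-star convergence yields $\int \phi \intd \nu = \lim_k \int \phi \intd \mu_{n_k} = \press(\phi,\mathcal{D})$.

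It then remains to compare with $\press(\phi,\lang)$. Identifying $\press(\phi,\lang)$ with the topological pressure $\press(\phi)$ and applying the variational principle \eqref{eqn:variational} to $\nu$ gives
\[
\press(\phi,\lang) \ge \int \phi \intd \nu + \entropy(\sigma,\nu) \ge \int \phi \intd \nu = \press(\phi,\mathcal{D}),
\]
which already establishes the non-strict inequality. To upgrade it to a strict one I would argue by contradiction: if $\press(\phi,\mathcal{D}) = \press(\phi,\lang)$, then both inequalities above are forced to be equalities, so $\entropy(\sigma,\nu) = 0$ while at the same time $\int \phi \intd \nu + \entropy(\sigma,\nu) = \press(\phi,\lang)$; that is, $\nu$ is an equilibrium state for $\phi$ of zero entropy. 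This contradicts Proposition~\ref{prop:positiveentropy}, whose hypotheses — positive $\metric$ diameter, specification of $\lang$, and $\phi$ continuous with the Bowen property — are precisely those assumed here. Hence $\press(\phi,\mathcal{D}) < \press(\phi,\lang)$.

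The main obstacle is the extraction of the invariant measure $\nu$ realizing the limsup: one must select a single subsequence along which $\press(\phi,\mathcal{D})$ is attained as a genuine limit \emph{and} along which the empirical measures converge, and then verify invariance of the weak-star limit. Once that measure is produced, the remainder is a short application of the variational principle together with Proposition~\ref{prop:positiveentropy}.
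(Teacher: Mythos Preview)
Your proof is correct and follows essentially the same route as the paper: use Lemma~\ref{lem:sup} to rewrite $\press(\phi,\mathcal{D})$ as a limsup of Birkhoff averages, extract via a subsequence an invariant measure $\nu$ with $\int \phi \intd \nu = \press(\phi,\mathcal{D})$, and then combine the variational principle with Proposition~\ref{prop:positiveentropy}. The only cosmetic difference is that the paper phrases the final step as a dichotomy (either $\nu$ is an equilibrium state or it is not) rather than as a proof by contradiction, but the content is identical.
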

\begin{proof}
Fix $x \in \Sigma$.
By Lemma~\ref{lem:sup} we have
\[
\press(\phi,\mathcal{D}) = \limsup_{n \to \infty} \frac{1}{n} (S_n\phi)(x)
\]
and therefore
\[
\press(\phi,\mathcal{D}) = \lim_{k \to \infty} \frac{1}{n_k}(S_{n_k} \phi)(x)
\]
for some $n_k \nearrow \infty$ in $\N$.
By passing to a further subsequence if necessary we get
\[
\press(\phi,\mathcal{D}) = \int \phi \intd \nu
\]
for some $\sigma$ invariant probability $\nu$ on $\Sigma$.
If $\nu$ is not an equilibrium state then $\press(\phi,\mathcal{D}) < \press(\phi,\lang)$ by the variational principle~\eqref{eqn:variational}.
On the other hand, if $\nu$ is an equilibrium state then $\press(\phi,\mathcal{D}) < \press(\phi,\lang)$ by Proposition~\ref{prop:positiveentropy}.
\end{proof}

\section{Intermediate beta transformations}
\label{sec:intBetaTransformation}

\subsection{Lexicographic description}

Fix $0 \le \alpha < 1$ and $\beta > 1$ and let $F : [0,1) \to [0,1)$ be the map $F(x) = \beta x + \alpha \bmod 1$.
Put $\ell = \ceil{\alpha + \beta} - 1$.
As mentioned in Section~\ref{sec:introduction} we associate to every $x \in [0,1)$ the sequence $\Omega(x) \in \{0,\dots,\ell\}^\N$ defined by $\Omega(x)_n = i$ if and only if $F^{n-1}(x) \in J_i$ for all $n \in \N$, where the intervals $J_i$ are those of \eqref{eqn:alphaBetaPartition}.
The subshift $\Sigma$ associated to the pair $(\alpha,\beta)$ is the closure of the image of $\Omega$.

The map $\Omega$ intertwines the linear order on $[0,1)$ with the lexicographic order $\preceq$ on $\Sigma$.
This leads to the following characterization of sequences in $\Sigma$. 
Throughout we write $\lang$ for the language of $\Sigma$ as well as $a = \Omega(0)$ and $b = \lim\limits_{x \nearrow 1} \Omega(x)$.

\begin{lemma}
[{\cite[Theorem~2]{MR570882}}]
\label{lem:lexshift}
A sequence $x \in \{0,\dots,\ell\}^\N$ is in $\Sigma$ if and only if
\begin{equation}
\label{eqn:sequenceorder}
a \preceq \sigma^{k-1}(x) \preceq b
\end{equation}
for all $k \in \N$.
\end{lemma}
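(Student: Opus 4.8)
The plan is to prove both implications from the fact, noted just above the statement, that $\Omega$ is order preserving: if $0 \le y < z < 1$ then $\Omega(y) \preceq \Omega(z)$. This is immediate once one observes that $F$ restricted to each $J_i$ is increasing and that the $J_i$ are listed in increasing order, so the first coordinate at which two itineraries differ is dictated by the linear order of the corresponding iterates.

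For the forward implication I would first take $x = \Omega(z)$ with $z \in [0,1)$. Since $\sigma^{k-1}\Omega(z) = \Omega(F^{k-1}z)$ and $F^{k-1}z \in [0,1)$, order preservation gives $a = \Omega(0) \preceq \Omega(F^{k-1}z)$, and letting $w \nearrow 1$ in $\Omega(F^{k-1}z) \preceq \Omega(w)$ yields $\Omega(F^{k-1}z) \preceq b$ as well; passing to the limit is legitimate because $\{ y : \Omega(F^{k-1}z) \preceq y \}$ is closed. Thus \eqref{eqn:sequenceorder} holds on the image $\Omega([0,1))$. To pass to the closure, note that the set $Y$ of all $x$ satisfying \eqref{eqn:sequenceorder} for every $k$ is closed: each of $\{ x : a \preceq x \}$ and $\{ x : x \preceq b \}$ is closed because its complement is an open union of cylinders, and $Y$ is an intersection of $\sigma^{-(k-1)}$-preimages of such sets. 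Hence $Y$ contains the closure $\Sigma$ of $\Omega([0,1))$, which is the forward direction.

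For the reverse implication, fix $x \in Y$. It suffices to show that each prefix $(x_1,\dots,x_n)$ is a word of $\Sigma$: choosing $z_n \in [0,1)$ with $\Omega(z_n) \in [(x_1,\dots,x_n)]$ gives $\Omega(z_n) \to x$, and $\Sigma$ being closed then forces $x \in \Sigma$. Writing $I_n = \{ z \in [0,1) : \Omega(z)_j = x_j \text{ for } 1 \le j \le n \}$, order preservation shows each $I_n$ is a subinterval of $[0,1)$ on which $F^n$ is affine and increasing, and realizability of the prefix is exactly the statement $I_n \neq \emptyset$. I would prove $I_n \neq \emptyset$ by induction on $n$, the base case being $I_1 = J_{x_1} \neq \emptyset$, with the inductive step amounting to $F^n(I_n) \cap J_{x_{n+1}} \neq \emptyset$.

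The main obstacle is this inductive step, and it is where the hypothesis \eqref{eqn:sequenceorder} is used. The image $F^n(I_n)$ is a subinterval of $[0,1)$ with left endpoint $c_n$ and right endpoint $d_n$, and the heart of the matter is that $c_n > 0$ can occur only when a trailing block of $(x_1,\dots,x_n)$ agrees with an initial block of $a$ — because $J_0$ is the only branch whose image $[\alpha,1)$ omits $0$, and $a = \Omega(0)$ — while symmetrically $d_n < 1$ can occur only when such a block agrees with $b$, since $J_\ell$ has image $[0,\gamma)$ with $\gamma = \beta + \alpha - \ell < 1$. The conditions $a \preceq \sigma^{k-1}x$ and $\sigma^{k-1}x \preceq b$ then guarantee that $J_{x_{n+1}}$ is pushed neither entirely below $c_n$ nor entirely above $d_n$, so that $F^n(I_n) \cap J_{x_{n+1}} \neq \emptyset$ and $I_{n+1} \neq \emptyset$. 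Formalizing this correspondence — matching the first index at which $\sigma^{k-1}x$ agrees with $a$ or with $b$ to the endpoint computations for $c_n$ and $d_n$, and treating the equality and half-open-boundary cases — is the delicate part; every interior symbol $1,\dots,\ell-1$ imposes no constraint, since $F$ carries the corresponding $J_i$ onto all of $[0,1)$.
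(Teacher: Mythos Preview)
The paper does not prove this lemma; it is cited from \cite[Theorem~2]{MR570882} and stated without argument, so there is no proof in the paper to compare yours against.

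That said, your argument is sound. The forward direction is complete as written. For the reverse direction, the claim you flag as ``delicate'' --- that $c_n>0$ (respectively $d_n<1$) occurs exactly when a trailing block of $x_1^n$ matches an initial block of $a$ (respectively $b$), and in such a way that the hypothesis \eqref{eqn:sequenceorder} at the relevant shift is exactly what forces $J_{x_{n+1}}\cap[c_n,d_n)\ne\emptyset$ --- is precisely the follower-set identity the paper records later as \eqref{eqn:alphaBetaFollower}. Concretely, the same one-step induction you sketch gives $c_n=F^{k_1}(0)\in J_{a_{k_1+1}}$ with $k_1=k_1(x_1^n)$, and symmetrically for $d_n$ via $k_2$; the half-open boundary issues evaporate once you observe that every $I_n$ and every $F^n(I_n)$ is closed on the left and open on the right, matching the $J_i$. (One small imprecision: $\gamma=\beta+\alpha-\ell$ equals $1$ when $\alpha+\beta\in\Z$, but then $b=\overline{\ell}$ and the upper constraint is vacuous, so nothing is lost.) The paper's proof of the immediately following Corollary~\ref{cor:lexword} carries out essentially the same trailing-block bookkeeping at the purely symbolic level while \emph{assuming} the present lemma; your dynamical argument in effect establishes the lemma and that corollary in one stroke.
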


\begin{corollary}
\label{cor:lexword}
A word $w : \{1,\dots,n\} \to \{0,\dots,\ell\}$ is in $\lang_n$ if and only if
\begin{equation}
\label{eqn:wordorder}
(a_1,\dots,a_{n-k+1}) \preceq (w_k,\dots,w_n) \preceq (b_1,\dots,b_{n-k+1})
\end{equation}
for all $k \in \{1,\dots,n\}$.
\end{corollary}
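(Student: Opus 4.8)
The plan is to deduce the word-level characterization directly from the sequence-level one in Lemma~\ref{lem:lexshift}, using the fact that $w \in \lang_n$ precisely when $w$ is the initial segment of some $x \in \Sigma$. One direction reads the inequalities off a given extension; the other manufactures an extension from the inequalities. Throughout I would rely on the elementary consequences of Lemma~\ref{lem:lexshift} applied to $a, b \in \Sigma$ and their shifts, namely that $a$ is shift-minimal, $b$ is shift-maximal, and that for sequences agreeing on their first $j$ coordinates one has $u \preceq v$ if and only if $\sigma^j u \preceq \sigma^j v$.

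For the forward direction, suppose $w \in \lang_n$ and choose $x \in \Sigma$ with $x_1^n = w$. Fixing $k \in \{1,\dots,n\}$ and setting $m = n-k+1$, the word $(w_k,\dots,w_n)$ is exactly the initial segment of length $m$ of $\sigma^{k-1}(x)$, and by Lemma~\ref{lem:lexshift} we have $a \preceq \sigma^{k-1}(x) \preceq b$. I would then observe that a strict violation $(w_k,\dots,w_n) \prec (a_1,\dots,a_m)$ is impossible, since such a violation forces a first coordinate $p \le m$ with $(\sigma^{k-1}x)_p < a_p$, whence $\sigma^{k-1}(x) \prec a$, contradicting the left inequality; the right inequality against $b$ is symmetric. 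This gives \eqref{eqn:wordorder} for every $k$.

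The backward direction is where the real work lies, since I must produce a point of $\Sigma$ extending $w$. The naive choice $x = w b_1 b_2 \cdots$ fails: if some suffix $(w_k,\dots,w_n)$ equals the corresponding prefix of $b$, appending $b$ can yield a shift $\succ b$, because $b$ is only shift-maximal and $\sigma^m(b) \prec b$ is possible. To repair this I would let $m^*$ be the length of the longest suffix of $w$ that is a prefix of $b$ and instead set $x = (w_1,\dots,w_n, b_{m^*+1}, b_{m^*+2}, \dots)$, i.e.\ continue $w$ along $b$. Then $\sigma^{n-m^*}(x) = b$, so every shift $\sigma^{k-1}(x)$ with $k > n - m^*$ is a shift of $b$ and satisfies the bounds by Lemma~\ref{lem:lexshift}. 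For $k \le n - m^*$, writing $m = n-k+1 > m^*$, the upper bound is automatic because maximality of $m^*$ forbids the equality $(w_k,\dots,w_n) = (b_1,\dots,b_m)$, leaving a strict inequality within the first $m$ coordinates; the lower bound reduces, in its equality case $(w_k,\dots,w_n) = (a_1,\dots,a_m)$, to showing $\sigma^m(a) \preceq \sigma^{m^*}(b)$.

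I expect the comparison $\sigma^m(a) \preceq \sigma^{m^*}(b)$ to be the crux. The key observation is that in this equality case the last $m^*$ symbols of $(a_1,\dots,a_m)$ coincide with $(b_1,\dots,b_{m^*})$, so $\sigma^{m-m^*}(a)$ and $b$ share their first $m^*$ coordinates; since $\sigma^{m-m^*}(a) \in \Sigma$ gives $\sigma^{m-m^*}(a) \preceq b$ by Lemma~\ref{lem:lexshift}, and since the lexicographic order is preserved by stripping a common initial segment, I obtain $\sigma^{m}(a) \preceq \sigma^{m^*}(b)$, as required. Having built $x \in \Sigma$ with $x_1^n = w$, the conclusion $w \in \lang_n$ follows.
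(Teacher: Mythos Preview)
Your argument is correct and is essentially the mirror image of the paper's own proof: the paper lets $i$ be the maximal length of a suffix of $w$ matching a prefix of $a$ and extends $w$ to $x = w\,\sigma^i(a)$, whereas you take $m^*$ maximal for $b$ and extend to $x = w\,\sigma^{m^*}(b)$. Consequently the roles of the two inequalities are swapped: in the paper the lower bound becomes strict by maximality of $i$ and the work goes into the upper bound (reducing to $a \preceq \sigma^{n-k-i}(b)$), while in your version the upper bound is strict by maximality of $m^*$ and the work goes into the lower bound (reducing to $\sigma^m(a) \preceq \sigma^{m^*}(b)$). Both reductions are resolved by the same mechanism, namely that shifts of $a$ and $b$ lie in $\Sigma$ and common prefixes can be stripped from a lexicographic comparison.
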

\begin{proof}
If $(w_k,\dots,w_n) \prec (a_1,\dots,a_{n-k+1})$, then every $x \in [w]$ satisfies $\sigma^{k-1}(x) \prec a$ and is therefore not in $\Sigma$ by Lemma \ref{lem:lexshift}.
Thus $w$ is not in $\lang_n$.
Similarly, if $(b_1,\dots,b_{n-k+1}) \prec (w_k,\dots,w_n)$ then $w$ is not in $\lang_n$.

Now suppose \eqref{eqn:wordorder} holds for some word $w$.
Fix $i$ maximal with $(w_{n-i+1},\dots,w_n) = (a_1,\dots,a_i)$ and put $x = w\sigma^i(a)$.
By Lemma \ref{lem:lexshift}, it suffices to show that \eqref{eqn:sequenceorder} holds.
Since $\sigma^{n-i}(x) = a$ it suffices to check \eqref{eqn:sequenceorder} for all $k < n-i$.  

Fix $k < n-i$.
We have $(a_1,\dots,a_{n-k}) \preceq (w_{k+1},\dots,w_n)$ by \eqref{eqn:wordorder}.
If $k < n-i$ then maximality of $i$ implies $(a_1,\dots,a_{n-k}) \prec (w_{k+1},\dots,w_n)$.
The initial subword of $\sigma^{k}(x)$ is $w_{k+1}^n$ so we have $a \prec \sigma^{k}(x)$.
For the second inequality in \eqref{eqn:sequenceorder} observe that $(w_{k+1},\dots,w_{n-i}) \preceq (b_1,\dots,b_{n-k-i})$ by \eqref{eqn:wordorder}.
Thus, for $k<n-i$ we have
\[
\sigma^{k}(x)
=
w_{k+1}^{n-i} a
\le
b_1^{n-k-i} a
\le
b_1^{n-k-i} \sigma^{n-k-i}(b)
=
b
\]
with the second inequality holding because $b$ satisfies \eqref{eqn:sequenceorder}. 
\end{proof}

\subsection{Graphical description}
\label{subsec:graph}

Using the lexicographic description of $\lang$ in the previous subsection we present here a graph with directed edges labeled by $\{0,\dots,\ell\}$ and a root vertex with the following property: that a word belongs to $\lang$ if and only if it corresponds via edge labels to a path in the graph starting at the root.

The \define{follower set} of the word $w \in \lang$ is the set 
\[
\foll(w) = \sigma^{|w|}([w]) = \{ x \in \{0,\dots,\ell\}^\N : wx \in \Sigma \}
\]
of sequences which can be concatenated to $w$ to give a sequence in $\Sigma$.
It is known from work of Takahashi~\cite{MR0340552} and Hofbauer~\cite{MR0492180} that every subshift can be encoded graphically using follower sets as vertices.
For intermediate beta shifts this procedure can be simplified by characterizing follower sets using the coordinates
\[
\begin{aligned}
k_1(w) &= \max \{0\} \cup \{ 1 \le k \le |w| : (w_{|w| - k + 1},\dots,w_{|w|}) = (a_1,\dots,a_k) \} \\
k_2(w) &= \max \{0\} \cup \{ 1 \le k \le |w| : (w_{|w| - k + 1},\dots,w_{|w|}) = (b_1,\dots,b_k) \}
\end{aligned}
\]
which record the lengths of the longest tail segments of $w$ that agree with initial segments of $a$ and $b$ respectively.
One can show that
\begin{equation}
\label{eqn:alphaBetaFollower}
\foll(w) = \{ x \in \Sigma : \sigma^{k_1(w)}(a) \preceq x \preceq \sigma^{k_2(w)} (b) \}
\end{equation}
for all $w$ so follower sets are indexed by the points of $(\N \cup \{0\})^2$.

We are now ready to define the graph.
For each $j,k \in \N \cup \{0\}$ with
\[
\{ x \in \Sigma : \sigma^j (a) \preceq x \preceq \sigma^k (b) \} \ne \emptyset
\]
the pair $(j,k)$ defines a vertex of $\Gamma$ denoted by $[j,k]$.
There are four types of edge.
\begin{enumerate}
[label=\textbf{E\arabic*}.,ref=\textbf{E\arabic*}]
\item
\label{edge:both}
(Follow $a$ and $b$)
If $\vertex{j,k}$ is a vertex and $a_{j+1} = b_{k+1}$ then this quantity labels an edge from $\vertex{j,k}$ to $\vertex{j+1,k+1}$.
\item
\label{edge:a}
(Follow $a$ alone)
If $\vertex{j,k}$ is a vertex and $a_{j+1} < b_{k+1}$ then $a_{j+1}$ labels an edge from $\vertex{j,k}$ to $\vertex{j+1,0}$.
\item
\label{edge:b}
(Follow $b$ alone)
If $\vertex{j,k}$ is a vertex and $a_{j+1} < b_{k+1}$ then $b_{k+1}$ labels an edge from $\vertex{j,k}$ to $\vertex{0,k+1}$.
\item
\label{edge:reset}
(Reset)
If $\vertex{j,k}$ is a vertex every $a_{j+1} < c < b_{k+1}$ labels and edge from $\vertex{j,k}$ to $\vertex{0,0}$.
\end{enumerate}
There is always an edge labeled $0$ from $\vertex{0,0}$ to $\vertex{1,0}$ and an edge labeled $\ell$ from $\vertex{0,0}$ to $\vertex{0,1}$.
We also have, for every $0 < i < \ell$, an edge from $\vertex{0,0}$ to $\vertex{0,0}$ labeled $i$.
See Figure~\ref{fig:graphExample} for a partial example.

\begin{figure}[htb]
\centering
\begin{tikzcd}[column sep=small, row sep=scriptsize]
\\
&
&
{\svtx{2,10}}
\\
&
{\svtx{1,9}}
\arrow[swap]{ur}{0}
\\
{\svtx{0,8}}
\arrow[swap]{ur}{0}
\\
&
{\svtx{1,7}}
\arrow[swap]{ul}{1}
\arrow[loop, out=45, in=30, distance=3cm, near start]{dddddddr}{0}
\\
{\svtx{0,6}}
\arrow[swap]{ur}{0}
\\
{\svtx{0,5}}
\arrow[swap]{u}{1}
\arrow[loop, blue, out=0, in=30, distance=2cm]{dddddr}
\\
&
{\svtx{1,4}}
\arrow[swap]{ul}{2}
\arrow[swap,loop, out=180, in=150, distance=3cm,overlay, very near start]{ddddl}{1}
\arrow[loop, out=45, in=60, distance=2.5cm, near start]{ddddr}{0}
\\
{\svtx{0,3}}
\arrow[swap]{ur}{0}
\\
{\svtx{0,2}}
\arrow[swap]{u}{2}
\arrow[loop, out=225, in=135,swap, near start]{dd}{1}
\arrow[blue,loop, out=330, in=60, distance=0.5cm]{ddr}
&
&
&
&
&
&
&
&
&
{\svtx{9,2}}
\arrow[swap]{rdd}{0}
\arrow[loop, out=270, in=290,overlay, very near start]{ddlllllllll}{1}
\arrow[loop,out=150, in=0,very near start, swap]{ulllllllll}{2}
\\
{\svtx{0,1}}
\arrow[swap]{u}{1}
\arrow[blue,loop, out=0, in=90, distance=0.25cm]{dr}
&
&
&
&
&
{\svtx{5,1}}
\arrow[swap]{dr}{0}
\arrow[loop,out=150, in=0, very near start,swap]{lllllu}{1}
&
&
&
{\svtx{8,1}}
\arrow[swap]{ur}{1}
\\
{\svtx{0,0}}
\arrow[swap]{r}{0}
\arrow[swap]{u}{2}
\arrow[loop, out=180, in=270, distance=1.5cm,overlay,swap]{}{1}
&
{\svtx{1,0}}
\arrow[swap]{r}{0}
\arrow[red,loop,out=120,in=340]{ul}
\arrow[loop,out=225,in=315]{l}{1}
&
{\svtx{2,0}}
\arrow[swap]{r}{1}
\arrow[red,swap,loop,out=90,in=18,distance=0.2cm]{llu}
&
{\svtx{3,0}}
\arrow[swap]{r}{1}
\arrow[red,swap,loop,out=90,in=36,distance=0.3cm]{lllu}
&
{\svtx{4,0}}
\arrow[swap]{ru}{2}
&
&
{\svtx{6,0}}
\arrow[swap]{r}{1}
\arrow[red,swap,loop,out=90,in=54,distance=3cm]{llllllu}
&
{\svtx{7,0}}
\arrow[swap]{ur}{2}
&
&
&
{\svtx{10,0}}
\\
\\
\\
\\
\\
\\
\end{tikzcd}
\caption{Part of the graph $\Gamma$ when $a = 0011201210\cdots$ and $b = 2120210100\cdots$ with all edges terminating at $\vertex{0,1}$ labeled ``2'' and all edges terminating at $\vertex{1,0}$ labeled ``0''.}
\label{fig:graphExample}
\end{figure}
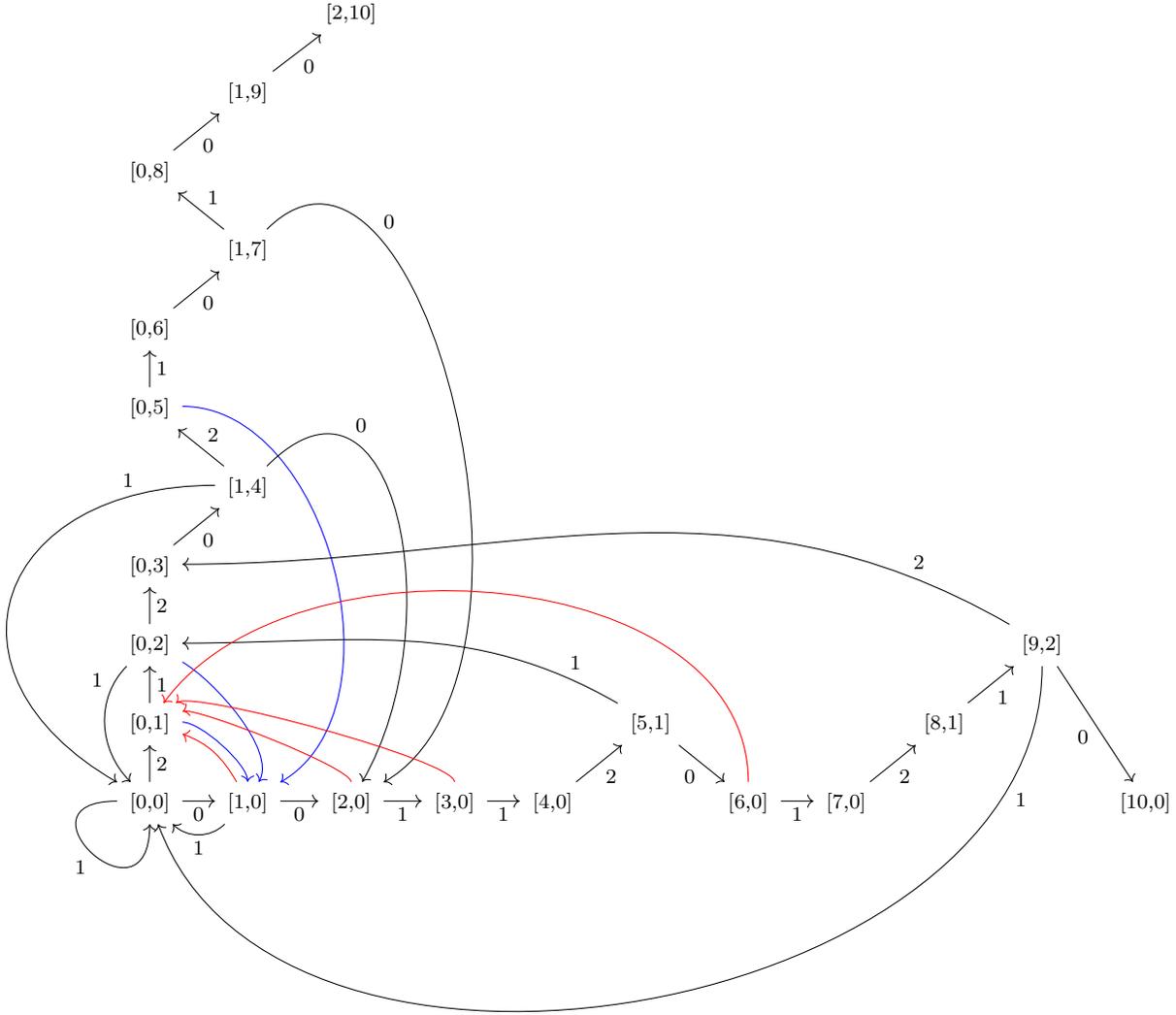

\begin{lemma}
Every vertex $\vertex{j,k}$ lies at the end of a path starting at $\vertex{0,0}$ and labeled by either an initial segment of $a$ or an initial segment of $b$.
\end{lemma}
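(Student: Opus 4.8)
The plan is to follow the two canonical paths obtained by reading off the symbols of $a$ and of $b$ one at a time, and to show that between them they exhaust the vertices of $\Gamma$.

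First I would make precise the \emph{$a$-path}: start at $\vertex{0,0}$ and, having arrived at a vertex $\vertex{j,k}$ after reading $a_1\cdots a_j$, follow the edge labelled $a_{j+1}$. The point that makes this well defined is that at every vertex $\vertex{j,k}$ one has $a_{j+1}\le b_{k+1}$; indeed $\vertex{j,k}$ being a vertex means $\sigma^j(a)\preceq\sigma^k(b)$, and comparing leading symbols gives $a_{j+1}\le b_{k+1}$. Hence $a_{j+1}$ labels either the edge \ref{edge:both} to $\vertex{j+1,k+1}$ (when $a_{j+1}=b_{k+1}$) or the edge \ref{edge:a} to $\vertex{j+1,0}$ (when $a_{j+1}<b_{k+1}$); in either case the first coordinate increases by exactly one. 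By induction the $a$-path is an infinite path through vertices $\vertex{m,p_m}$ with label $a_1\cdots a_m$ and first coordinate $m$ after $m$ steps; taking the empty initial segment accounts for $\vertex{0,0}$. Reading $b$ instead and using \ref{edge:both} and \ref{edge:b} produces, symmetrically, a \emph{$b$-path} through vertices $\vertex{q_m,m}$ labelled $b_1\cdots b_m$ whose second coordinate after $m$ steps is $m$.

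It remains to show that every vertex lies on one of these two paths. Each vertex is a follower set, so by \eqref{eqn:alphaBetaFollower} it equals $\vertex{k_1(w),k_2(w)}$ for some $w\in\lang$; write $j=k_1(w)$ and $k=k_2(w)$ and suppose $k\le j$ (the case $j\le k$ is symmetric, with the roles of $a$ and $b$ interchanged). By definition of $k_1(w)$ the length-$j$ suffix of $w$ is exactly $a_1\cdots a_j$, which is itself a word of $\lang$. I claim that reading $a_1\cdots a_j$ from the root returns to $\vertex{j,k}$. Its longest suffix agreeing with an initial segment of $a$ has length $j$, while its longest suffix agreeing with an initial segment of $b$ has length exactly $k$: it is at least $k$ because the length-$k$ suffix of $w$, namely $b_1\cdots b_k$, is contained in its last $j\ge k$ symbols, and it is no larger because any longer such suffix would also be a suffix of $w$, contradicting $k_2(w)=k$. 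Thus $a_1\cdots a_j$ reaches $\vertex{j,k}$, so $\vertex{j,k}$ lies on the $a$-path, and it is the initial segment $a_1\cdots a_j$ of $a$ that labels the path to it.

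The step I expect to be the main obstacle is this last bookkeeping: verifying that truncating a word $w$ reaching $\vertex{j,k}$ to its maximal $a$-matching (or $b$-matching) suffix returns to the very same vertex. This rests on $\Gamma$ being deterministic—each vertex has at most one outgoing edge carrying a given label, so a word determines at most one path from $\vertex{0,0}$—together with the follower-set identification \eqref{eqn:alphaBetaFollower}, which pins down the vertex reached by a word $u$ as the one indexed by $(k_1(u),k_2(u))$. Once that identification is available the two coordinate computations above are routine, and the $a$-path and $b$-path visibly cover the vertices with $k\le j$ and with $j\le k$ respectively, which between them are all of the vertices.
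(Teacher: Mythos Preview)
Your argument is correct and is essentially the paper's proof written out in full: the paper simply observes that if $\vertex{j,k}$ is a vertex then some word $w$ has both $(a_1,\dots,a_j)$ and $(b_1,\dots,b_k)$ as suffixes, and that whichever of these two is longer labels the required path from $\vertex{0,0}$. Your additional bookkeeping---checking that $k_2(a_1\cdots a_j)=k$ when $k\le j$, and flagging that this relies on the endpoint of a path labelled $u$ being $\vertex{k_1(u),k_2(u)}$---just makes explicit what the paper leaves tacit (and formalizes a few lines later as Lemma~\ref{lem:graphpropone}).
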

\begin{proof}
If $\vertex{j,k}$ is a vertex then there is a word $w$ such that $(w_{|w|-j+1},\dots,w_{|w|}) = (a_1,\dots,a_j)$ and $(w_{|w| - k + 1},\dots,w_{|w|}) = (b_1,\dots,b_k)$.
If $j \ge k$ then $(a_1,\dots,a_j)$ labels the desired path, otherwise use $(b_1,\dots,b_k)$.
\end{proof}

\begin{lemma}
The only vertex of the form $\vertex{k,k}$ is $\vertex{0,0}$.
\end{lemma}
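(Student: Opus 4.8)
The plan is to exploit the fact that the extremal sequences $a$ and $b$ already disagree in their very first coordinate. Since $0 \in J_0$ we have $a_1 = \Omega(0)_1 = 0$, while every $x$ sufficiently close to $1$ lies in $J_\ell$, so that $b = \lim_{x \nearrow 1}\Omega(x)$ satisfies $b_1 = \ell$. Because $\beta > 1$ we have $\ell = \ceil{\alpha + \beta} - 1 \ge 1$, and therefore
\[
a_1 = 0 \ne \ell = b_1 .
\]
This single inequality is the entire engine of the proof: $a$ and $b$ cannot share an initial block of positive length.

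Next I would argue by contradiction. Suppose $\vertex{k,k}$ is a vertex with $k \ge 1$. Exactly as in the proof of the preceding lemma, the existence of $\vertex{k,k}$ furnishes a word $w \in \lang$ whose terminal block of length $k$ agrees simultaneously with the initial block of length $k$ of $a$ and with that of $b$; that is,
\[
(w_{|w|-k+1},\dots,w_{|w|}) = (a_1,\dots,a_k)
\quad\text{and}\quad
(w_{|w|-k+1},\dots,w_{|w|}) = (b_1,\dots,b_k).
\]
The crucial observation is that both equations refer to the \emph{same} length-$k$ suffix of $w$, so their right-hand sides must coincide: $(a_1,\dots,a_k) = (b_1,\dots,b_k)$. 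Since $k \ge 1$, reading off the first coordinate gives $a_1 = b_1$, contradicting the inequality of the first paragraph. Hence no vertex $\vertex{k,k}$ with $k \ge 1$ can exist, and as $\vertex{0,0}$ is itself a vertex this establishes the claim.

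The argument is short, so there is little genuine obstacle; the one place to be careful is the extraction of the word $w$ attached to a diagonal vertex. I would either invoke the construction used in the previous lemma verbatim, or equivalently recall that every vertex is a follower set $\foll(w)$ indexed by $(k_1(w),k_2(w))$, so that $k_1(w) = k_2(w) = k$ forces the length-$k$ suffix of $w$ to be at once an initial segment of $a$ and of $b$. Either route reduces the lemma to the elementary fact that $a_1 \ne b_1$.
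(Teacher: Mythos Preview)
Your proof is correct and follows essentially the same line as the paper's: both reduce the claim to the single observation that $a_1 \ne b_1$, so no word can have a length-$k$ suffix agreeing simultaneously with $(a_1,\dots,a_k)$ and $(b_1,\dots,b_k)$ for $k \ge 1$. You have simply spelled out in more detail why $a_1 = 0 \ne \ell = b_1$ and how to extract the word $w$ attached to a putative vertex $\vertex{k,k}$, but the argument is the same.
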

\begin{proof}
No word has a tail segment of length $k \in \N$ agreeing with $(a_1,\dots,a_k)$ and $(b_1,\dots,b_k)$ because $a_1 \ne b_1$.
\end{proof}

\begin{lemma}
\label{lem:uniqueLeaver}
For every $N \in \N$ there is only one vertex of the form $\vertex{N,j}$ with $j \le N$.
\end{lemma}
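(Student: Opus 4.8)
The plan is to exploit determinism of the edges that ``follow $a$'': I will show that reading the symbols $a_1,a_2,\dots$ out of the root $\vertex{0,0}$ traces a single well-defined path, so that any vertex reachable from $\vertex{0,0}$ by a label sequence which is an initial segment of $a$ is uniquely determined by the length of that segment. Combining this with the lemma above that every vertex $\vertex{j,k}$ lies at the end of a path labeled by an initial segment of $a$ or of $b$ (henceforth the path lemma) will immediately pin down the vertices of the desired form.

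First I would trace the $a$-path. Suppose a path from $\vertex{0,0}$ labeled $(a_1,\dots,a_m)$ has reached a vertex $\vertex{m,k}$, and consider the edge it takes on reading $a_{m+1}$. Inspecting the edge rules, the only edge out of $\vertex{m,k}$ labeled $a_{m+1}$ is the edge \ref{edge:both} to $\vertex{m+1,k+1}$ when $a_{m+1} = b_{k+1}$, and the edge \ref{edge:a} to $\vertex{m+1,0}$ when $a_{m+1} < b_{k+1}$: the edge \ref{edge:b} carries the label $b_{k+1} > a_{m+1}$, and every \ref{edge:reset} edge carries a label strictly between $a_{m+1}$ and $b_{k+1}$, so neither can be labeled $a_{m+1}$. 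In both admissible cases the first coordinate advances from $m$ to $m+1$. Hence there is a unique path out of $\vertex{0,0}$ whose label sequence is $(a_1,\dots,a_N)$, and it terminates at a single vertex with first coordinate $N$, which I denote $\vertex{N,s_N}$; here $s_N$ is nothing but the length of the longest suffix of $(a_1,\dots,a_N)$ that is an initial segment of $b$, though for the lemma I only need that $\vertex{N,s_N}$ is determined by $N$.

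The lemma then follows at once. Let $\vertex{N,j}$ be any vertex with $j \le N$. Because its first coordinate is at least its second, the path lemma furnishes a path from $\vertex{0,0}$ to $\vertex{N,j}$ labeled by the initial segment $(a_1,\dots,a_N)$; by the uniqueness just established this is the path terminating at $\vertex{N,s_N}$, so $\vertex{N,j}=\vertex{N,s_N}$ and $j=s_N$. Since $s_N$ depends only on $N$, there is at most one such vertex. I expect the single delicate point to be the determinism claim, namely checking against rules \ref{edge:both}--\ref{edge:reset} that the label $a_{m+1}$ selects exactly one outgoing edge and that this edge raises the first coordinate by one; once that is in place the rest is a direct application of the path lemma.
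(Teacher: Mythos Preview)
Your proof is correct. However, the paper's argument is considerably more direct: it works straight from the coordinates $k_1,k_2$ rather than tracing paths. After using the previous lemma to reduce to $j<N$, the paper simply observes in one sentence that any word $w$ with $k_1(w)=N$ and $k_2(w)<N$ ends in $a_1\cdots a_N$, so $k_2(w)$ --- being the length of the longest suffix of those last $N$ letters that matches a prefix of $b$ --- depends only on $N$. This is exactly your $s_N$, so you have identified the same quantity; but you reach it by establishing determinism of the $a$-labeled edges and then invoking the path lemma, whereas the paper just reads it off the definition of $k_2$. Your route has the virtue of making the graph structure explicit (and essentially anticipates the first-coordinate half of Lemma~\ref{lem:graphpropone}), but it is longer than needed here. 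One small point worth making explicit in your inductive step: at any vertex $\vertex{m,k}$ one has $a_{m+1}\le b_{k+1}$ (from nonemptiness of the follower set), which is why one of \ref{edge:both} or \ref{edge:a} is always available; you assume this implicitly when you speak of ``both admissible cases.''
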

\begin{proof}
By the previous lemma, if $\vertex{N,j}$ is a vertex with $j \le N$ then $j < N$.
All words $w$ with $k_1(w) = N$ and $k_2(w) < N$ have the same $k_2$ coordinate.
\end{proof}

\begin{lemma}
\label{lem:graphpropone}
Let $\gamma$ be a path in $\Gamma$ starting at $\vertex{0,0}$ and ending at $\vertex{i,j}$.
If $w$ is the associated word over $\{0,\dots,\ell\}$ then $i$ is maximal such that $(w_{n-i+1},\dots,w_n) = (a_1,\dots,a_i)$ and $j$ is maximal such that $(w_{n-j+1},\dots,w_n) = (b_1,\dots,b_j)$.
\end{lemma}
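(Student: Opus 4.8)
The plan is to argue by induction on the number of edges of $\gamma$, equivalently on $n = |w|$. Writing $k_1(w)$ and $k_2(w)$ for the two coordinates defined just before \eqref{eqn:alphaBetaFollower}, the assertion to prove is precisely that a path from $\vertex{0,0}$ labeled by $w$ terminates at the vertex $\vertex{k_1(w),k_2(w)}$. The base case is the empty path, which sits at $\vertex{0,0}$ and is labeled by the empty word, for which $k_1 = k_2 = 0$. For the inductive step I suppose a path labeled $w$ ends at $\vertex{i,j}$ with $i = k_1(w)$ and $j = k_2(w)$, append one further edge carrying label $c$ to arrive at a vertex $\vertex{i',j'}$, and show $i' = k_1(wc)$ and $j' = k_2(wc)$. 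Since $\vertex{i,j}$ is a vertex we have $a_{i+1} \le b_{j+1}$, so exactly one of \ref{edge:both}--\ref{edge:reset} applies according to how $c$ compares with $a_{i+1}$ and $b_{j+1}$, and the four edge types exhaust the possibilities.

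The ``follow'' half of each case is immediate from the characterization of $k_1, k_2$ as longest matching suffixes. For an edge of type \ref{edge:both} or \ref{edge:a} the label is $c = a_{i+1}$, so appending it to $w$ --- whose longest suffix agreeing with a prefix of $a$ has length $i$ --- creates the suffix $(a_1,\dots,a_{i+1})$ and hence $k_1(wc) \ge i+1$; equality holds because a strictly longer such suffix of $wc$ would, upon deleting its final symbol, yield a suffix of $w$ agreeing with a prefix of $a$ of length exceeding $i$, contradicting $k_1(w) = i$. The identical argument gives $k_2(wc) = j+1$ along \ref{edge:both} and \ref{edge:b}.

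The crux, and the step I expect to be the main obstacle, is the collection of ``reset'' assertions: $k_2(wc) = 0$ along \ref{edge:a} and \ref{edge:reset}, and $k_1(wc) = 0$ along \ref{edge:b} and \ref{edge:reset}. Here the appended symbol could a priori manufacture a spurious suffix match with the other extremal sequence. I plan to rule this out using that $a = \inf \Sigma$ and $b = \sup \Sigma$ are shift extremal: as $\sigma^m(a), \sigma^m(b) \in \Sigma$ for all $m$, one has $a \preceq \sigma^m(a)$ and $\sigma^m(b) \preceq b$, and in particular $a_{m+1} \ge a_1$ and $b_{m+1} \le b_1$. Concretely, to see $k_2(wc) = 0$ when $c = a_{i+1} < b_{j+1}$, suppose $k_2(wc) = m \ge 1$. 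Then $c = b_m$ and, for $m \ge 2$, the last $m-1$ symbols of $w$ equal $(b_1,\dots,b_{m-1})$, so $m - 1 \le k_2(w) = j$; comparing this suffix with the length-$j$ suffix $(b_1,\dots,b_j)$ of $w$ shows that $\sigma^{j-m+1}(b)$ opens with $(b_1,\dots,b_{m-1})$ and carries $b_{j+1}$ in position $m$. Shift maximality $\sigma^{j-m+1}(b) \preceq b$ then forces $b_{j+1} \le b_m$ (the inequality $b_{j+1} \le b_1$ covers $m = 1$ uniformly), whence $b_{j+1} \le b_m = c < b_{j+1}$, a contradiction. The assertions for \ref{edge:b} and \ref{edge:reset} follow by the same computation with the roles of $a$ and $b$ interchanged, using shift minimality of $a$ and the strict inequality $c > a_{i+1}$ in place of $c < b_{j+1}$.
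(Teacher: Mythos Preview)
Your argument is correct and follows essentially the same inductive scheme as the paper: both proceed by induction on the length of the path, split the inductive step into the four edge types \ref{edge:both}--\ref{edge:reset}, dispose of the ``follow'' parts by maximality of $k_1,k_2$, and handle the ``reset'' parts via the shift extremality of $a$ and $b$. Your treatment of the reset case invokes $\sigma^m(b)\preceq b$ directly, while the paper routes the same inequality through Corollary~\ref{cor:lexword}; these are the same computation in different dress.
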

\begin{proof}
We proceed by induction on the length $n$ of the path.
When $n = 1$ there are four cases to check.
If $w_1 = a_1$, then $w$ labels an edge from $\vertex{0,0}$ to $\vertex{1,0}$.
If $a_1 < w_1 < b_1$ then $\gamma$ is a loop at $\vertex{0,0}$.
If $w_1 = b_1$, then $w$ labels an edge from $\vertex{0,0}$ to $\vertex{0,1}$.

Now suppose the conclusion of the lemma holds for paths of length $n$.
Let $\gamma$ be a path in $\Gamma$ of length $n+1$ starting at $\vertex{0,0}$.
Let $\vertex{i,j}$ be the penultimate vertex of $\gamma$ and let $w$ be the corresponding word over $\{0,\dots,\ell\}$.
There are four cases to consider according to which of the rules \ref{edge:both} through \ref{edge:reset} determines the last edge of $\gamma$. 

In case \ref{edge:both} the path $\gamma$ ends at $\vertex{i+1,j+1}$.
By the induction hypothesis, $i$ is maximal such that $(w_{n-i+1},\dots,w_n) = (a_1,\dots,a_i)$.
Hence, $a_{i+1} = w_{n+1}$ implies $i+1$ is maximal such that $(w_{n-i+1},\dots,w_{n+1}) = (a_1,\dots,a_{i+1})$.
By a similar argument $j+1$ is maximal with $(w_{n-j+1},\dots,w_{n+1}) = (b_1,\dots,b_{j+1})$.

In case \ref{edge:a} our path $\gamma$ ends at $\vertex{i+1,0}$.
As in the previous case $i+1$ is maximal such that $(w_{n-i+1},\dots,w_{n+1}) = (a_1,\dots,a_{i+1})$.
What needs to be shown is that $(w_k,\dots,w_{n+1}) \ne (b_1,\dots,b_{n-k+2})$ for all $1\le k\le n+1$.
That is, the word $w$ does not end with an initial sub-word of $b$.
By the induction hypothesis, $j$ is maximal such that $(w_{n-j+1},\cdots,w_n) = (b_1,\dots,b_j)$.
This implies $(w_k,\dots,w_{n+1}) \ne (b_1,\dots,b_{n-k+2})$ for all $1\le k \le n-j$.
For $n-j+1\le k \le n+1$, we have $w_k^{n} = b_{k+j-n}^j$.  This leads to
\[
w_{k}^{n+1}
=
b_{k+j-n}^{j}w_{n+1}
\prec
b_{k+j-n}^{j+1}
\preceq
b_1^{n-k+2} 
\]
with the second inequality holding because $b_1^{j+1}$ satisfies \eqref{eqn:wordorder}.

The remaining cases \ref{edge:b} and \ref{edge:reset} are similar.
\end{proof}

\begin{lemma}
\label{lem:graphproptwo}
A word $w$ over $\{0,\dots,\ell\}$ is in $\lang$ if and only if the letters of $w$ label a path in $\Gamma$ that begins at $\vertex{0,0}$.
\end{lemma}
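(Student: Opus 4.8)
The plan is to prove the two implications separately, each by induction on length, taking Corollary~\ref{cor:lexword} as the working criterion for membership in $\lang$ and Lemma~\ref{lem:graphpropone} to identify the endpoint of any path. The one structural fact I would isolate first is that, reading off the edge rules \ref{edge:both}--\ref{edge:reset} (and the edges out of $\vertex{0,0}$), the edges leaving a vertex $\vertex{i,j}$ are labeled by precisely those letters $c$ with $a_{i+1} \le c \le b_{j+1}$; there is no edge exactly when $a_{i+1} > b_{j+1}$, which cannot occur since $\vertex{i,j}$ being a vertex forces $\sigma^i(a) \preceq \sigma^j(b)$. The base case of each induction, a single letter, is immediate from $a_1 = 0$ and $b_1 = \ell$.

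For the forward implication, suppose $w$ of length $n+1$ lies in $\lang$. Its prefix $w_1^n$ is then also in $\lang$, so by the inductive hypothesis it labels a path from $\vertex{0,0}$, which by Lemma~\ref{lem:graphpropone} ends at $\vertex{i,j}$ with $i = k_1(w_1^n)$ and $j = k_2(w_1^n)$. I would extend this path by an edge labeled $w_{n+1}$, which exists as soon as $a_{i+1} \le w_{n+1} \le b_{j+1}$. Both bounds follow from \eqref{eqn:wordorder} for $w$: the choice $k = n-i+1$ gives $(a_1,\dots,a_{i+1}) \preceq (a_1,\dots,a_i,w_{n+1})$ and hence $a_{i+1} \le w_{n+1}$, while $k = n-j+1$ gives the companion inequality $w_{n+1} \le b_{j+1}$.

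For the reverse implication, let $w$ of length $n+1$ label a path whose initial segment, of word $w_1^n$, ends at $\vertex{i,j}$ with $i = k_1(w_1^n)$ and $j = k_2(w_1^n)$ by Lemma~\ref{lem:graphpropone}; existence of the final edge gives $a_{i+1} \le w_{n+1} \le b_{j+1}$, and $w_1^n \in \lang$ by induction. It remains to verify \eqref{eqn:wordorder} for $w$. Fixing $k$ and writing $m = n-k+1$, membership of $w_1^n$ yields $(a_1,\dots,a_m) \preceq (w_k,\dots,w_n)$. When this is strict the deciding discrepancy sits among the first $m$ coordinates and survives appending $w_{n+1}$, so $(a_1,\dots,a_{m+1}) \prec (w_k,\dots,w_n,w_{n+1})$ and the left inequality holds; the right inequality with $b$ is identical.

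The remaining, and genuinely delicate, situation is equality $(w_k,\dots,w_n) = (a_1,\dots,a_m)$, which I expect to be the only real obstacle. Here $(w_k,\dots,w_n)$ is a length-$m$ tail of $w_1^n$ agreeing with an initial segment of $a$, so $m \le k_1(w_1^n) = i$; comparing it with the length-$i$ tail $(w_{n-i+1},\dots,w_n) = (a_1,\dots,a_i)$, of which it is a suffix, forces the self-overlap $(a_1,\dots,a_m) = (a_{i-m+1},\dots,a_i)$. Since $a \in \Sigma$, Lemma~\ref{lem:lexshift} gives $a \preceq \sigma^{i-m}(a)$; as the first $m$ coordinates of these two sequences now coincide, the next coordinate forces $a_{m+1} \le a_{i+1} \le w_{n+1}$, which is exactly the left inequality of \eqref{eqn:wordorder}. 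The right inequality is obtained symmetrically from $\sigma^{j-m}(b) \preceq b$, and the case $k = n+1$ is the instance $m = 0$ of this argument. Everything outside this equality case — the strict comparisons and the entire forward direction — is routine bookkeeping with \eqref{eqn:wordorder}.
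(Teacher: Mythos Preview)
Your proof is correct and follows essentially the same approach as the paper: induction on length, reducing via Lemma~\ref{lem:graphpropone} to the equivalence ``$w_1^{n+1}\in\lang$ iff $a_{i+1}\le w_{n+1}\le b_{j+1}$'' given $w_1^n\in\lang$ with $\vtx(w_1^n)=\vertex{i,j}$. The only cosmetic difference is that the paper splits the backward direction by the index range $k\le n-i$ versus $k>n-i$ and appeals to Corollary~\ref{cor:lexword} applied to $a_1^{i+1}$, whereas you split by strict versus equality in $(a_1,\dots,a_m)\preceq(w_k,\dots,w_n)$ and invoke Lemma~\ref{lem:lexshift} for $a\preceq\sigma^{i-m}(a)$; these are the same self-comparison of $a$ (respectively $b$) phrased at the word and sequence levels.
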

\begin{proof}
We proceed by induction on $n$.
The case $n=1$ is immediate: for every $i$ in $\{0,\dots,\ell\}$ there is an edge labeled $i$ leaving $\vertex{0,0}$.

Suppose the conclusion of the lemma holds for words of length $n$ over $\{0,\dots,\ell\}$.
Let $w$ be a word of length $n+1$ over $\{0,\dots,\ell\}$.
If the initial subword $(w_1,\dots,w_n)$ is not in $\lang$, then by the induction hypothesis $(w_1,\dots,w_n)$ does not spell a path in $\Gamma$.
Hence, neither is $w$ in $\lang$ nor does $w$ spell a path in $\Gamma$.
If $(w_1,\dots,w_n)$ is in $\lang$ the by induction there is a corresponding path $\gamma$ of length $n$ in $\Gamma$ starting at $\vertex{0,0}$.
Let $\vertex{i,j}$ be the terminal vertex of $\gamma$.
Certainly $a_{i+1} \le w_{n+1} \le b_{j+1}$.
We conclude by using Corollary \ref{cor:lexword} to show that if $w_1^n$ is in $\lang$, then $w_1^{n+1}$ is in $\lang$ if and only if $a_{i+1}\le w_{n+1}\le b_{j+1}$.

If $(w_1,\dots,w_{n+1})$ is in $\lang$, then Corollary~\ref{cor:lexword} implies $(a_1,\dots,a_{i+1}) \preceq (w_{n-i+1},\dots,w_{n+1})$.
The path corresponding to $(w_1,\dots,w_n)$ ends at the vertex $\vertex{i,j}$ so Lemma \ref{lem:graphpropone} implies $i$ is maximal such that $w_{n-i+1}^n = a_1^i$.
Evidently
\[
(a_1,\dots,a_{i+1}) \preceq (a_1,\dots,a_i,w_{n+1})
\]
hence it must be that $w_{n+1}\ge a_{i+1}$.  A similar argument shows that $w_{n+1}\le b_{j+1}$.

Suppose $w_{n+1} \ge a_{i+1}$.
By the induction hypothesis and Corollary~\ref{cor:lexword} we have
\[
(a_1,\dots,a_{k-n+1}) \preceq (w_k,\dots,w_n)
\]
for $1\le k \le n$.
By maximality of $i$, we have $(a_1,\dots,a_{k-n+1}) \prec (w_k,\dots,w_n)$.  Hence
\[
(a_1,\dots,a_{k-n+2}) \prec (w_k,\dots,w_{n+1})
\]
for all $1\le k \le n-i$.
If $n-i+1\le k \le n+1$ then
\[
(a_1,\dots,a_{n-k+2})
\preceq
(a_{i+k-n},\dots,a_{i+1})
\preceq
(a_{i+k-n},\dots,a_i,w_{n+1})
=
(w_k,\dots,w_{n+1})
\]
with the first inequality holding because $a_1^{i+1}$ satisfies \eqref{eqn:wordorder}.
A similar argument shows that $w_{n+1}\le b_{j+1}$ implies $(w_k,\dots,w_{n+1}) \preceq (b_1,\dots,b_{n-k+2})$ for all $1\le k \le n+1$.
Hence, $a_{i+1}\le w_{n+1}\le b_{j+1}$ which implies $w$ is in $\lang$. 
\end{proof}

To every path in $\Gamma$ we can associate a word over $\{0,\dots,\ell\}$ by reading off the labels of the edges in the path.
By Lemma~\ref{lem:graphproptwo}, every word $w$ in the language $\lang$ of $\Sigma$ corresponds to a path $\pth(w)$ in $\Gamma$ beginning at $\vertex{0,0}$.
Write $\vtx(w)$ for the terminal vertex of the path $\pth(w)$.
Thus $\vtx(w) = \vertex{k_1(w),k_2(w)}$.
We also write $\pth(x)$ for the infinite path in $\Gamma$ starting at $\vertex{0,0}$ determined by $x \in \Sigma$.

By a \define{flat} in the graph we mean a path either of the form
\[
\vertex{r,0} \to \vertex{r+1,0} \to \cdots \to \vertex{s,0}
\]
or of the form
\[
\vertex{0,r} \to \vertex{0,r+1} \to \cdots \to \vertex{0,s}
\]
for some $r < s$ in $\N \cup \{0\}$.
Paths of the first kind are called \define{vertical flats} while paths of the second kind are called \define{horizontal flats}.
Note that the requirement $r < s$ precludes a single vertex from being considered a flat.

\begin{lemma}
\label{lem:flatRestrictions}
No edge in a horizontal flat is labeled $0$ and no edge in a vertical flat is labeled $\ell$.
\end{lemma}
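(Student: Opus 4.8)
The plan is to reduce the statement to the two facts $a_1 = 0$ and $b_1 = \ell$ and then read the forbidden labels directly off the edge rules. First I would record these two facts. Since $a = \Omega(0)$ and $0$ lies in $J_0 = [0,(1-\alpha)/\beta)$, the definition of $\Omega$ gives $a_1 = 0$. For $b = \lim_{x \nearrow 1}\Omega(x)$, the inequality $\ell = \ceil{\alpha+\beta}-1 < \alpha+\beta$ yields $(\ell-\alpha)/\beta < 1$, so every $x$ close enough to $1$ lies in $J_\ell = [(\ell-\alpha)/\beta,1)$ and hence $b_1 = \ell$.

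Next I would identify which edges can appear in a vertical flat. A step of such a flat has the form $\vertex{m,0} \to \vertex{m+1,0}$, so I would check, among the rules \ref{edge:both}--\ref{edge:reset} and the distinguished edges out of $\vertex{0,0}$, which can terminate at $\vertex{m+1,0}$ when issued from $\vertex{m,0}$. Rule \ref{edge:both} raises the second coordinate to $\vertex{m+1,1}$, rule \ref{edge:b} lands on the vertical axis $\vertex{0,\cdot}$, and rule \ref{edge:reset} returns to $\vertex{0,0}$, so none of these continues the flat. The only possibility is the rule \ref{edge:a} edge, which exists precisely when $a_{m+1} < b_1$ and carries the label $a_{m+1}$ (for $m = 0$ this is exactly the distinguished edge $\vertex{0,0} \to \vertex{1,0}$ labeled $0$). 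Since $b_1 = \ell$, the existence condition $a_{m+1} < b_1 = \ell$ shows the label is strictly below $\ell$, so no edge of a vertical flat is labeled $\ell$.

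The horizontal case is symmetric: a step $\vertex{0,m} \to \vertex{0,m+1}$ can only be a rule \ref{edge:b} edge, which exists precisely when $a_1 < b_{m+1}$ and is labeled $b_{m+1}$ (for $m = 0$ this is the distinguished edge $\vertex{0,0} \to \vertex{0,1}$ labeled $\ell$). Because $a_1 = 0$, the condition $0 = a_1 < b_{m+1}$ forces the label to be positive, so no edge of a horizontal flat is labeled $0$. The only real content here is the bookkeeping in the second paragraph: once $a_1 = 0$ and $b_1 = \ell$ are in hand, the very inequality $a_{j+1} < b_{k+1}$ that guarantees an edge of type \ref{edge:a} or \ref{edge:b} exists is exactly what excludes the forbidden label, so I anticipate no serious obstacle beyond verifying that each step of a flat is forced to be a single edge type.
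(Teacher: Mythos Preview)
Your argument is correct and is essentially the same as the paper's: both hinge on $a_1 = 0$, $b_1 = \ell$, and the edge rules \ref{edge:both}--\ref{edge:reset}. The paper phrases it contrapositively (an edge labeled $0$ out of $\vertex{0,k}$ must land at $\vertex{1,k+1}$ or $\vertex{1,0}$, hence outside any horizontal flat), whereas you argue directly that each step of a flat is forced to be an \ref{edge:a} or \ref{edge:b} edge and then read off the label constraint from the existence inequality; these are two sides of the same observation.
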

\begin{proof}
If an edge with source $\vertex{0,k}$ is labeled $0$ then its target is either $\vertex{1,k+1}$ or $\vertex{1,0}$.
Neither of these vertices can belong to a horizontal flat.
Similarly, if an edge with source vertex $\vertex{j,0}$ is labeled $\ell$ then its target is either $\vertex{j+1,1}$ or $\vertex{1,0}$.
\end{proof}

By a \define{diagonal} in the graph we mean a path of the form
\begin{equation}
\label{path:diagonal}
\vertex{p,q} \to \vertex{p+1,q+1} \to \cdots \to \vertex{p+d,q+d}
\end{equation}
for some $p,q,d \in \N$.

\begin{lemma}
If $\vertex{p,q} \to \vertex{p+1,q+1}$ is an edge then no other edge has $\vertex{p,q}$ as a source.
\end{lemma}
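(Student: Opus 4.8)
The plan is to play off the one edge-creation rule that advances both coordinates at once, namely \ref{edge:both}, against the three rules \ref{edge:a}, \ref{edge:b}, \ref{edge:reset}, each of which sends its source to a vertex having a zero coordinate. First I would pin down which rule produced the given edge. Its target $\vertex{p+1,q+1}$ has both coordinates positive, whereas \ref{edge:a}, \ref{edge:b}, \ref{edge:reset} carry their source to $\vertex{j+1,0}$, $\vertex{0,k+1}$, $\vertex{0,0}$ respectively, and the three distinguished edges issuing from $\vertex{0,0}$ terminate at $\vertex{1,0}$, $\vertex{0,1}$, $\vertex{0,0}$. None of these targets has both coordinates positive, so the edge $\vertex{p,q}\to\vertex{p+1,q+1}$ must come from rule \ref{edge:both}, and the defining condition of that rule forces $a_{p+1} = b_{q+1}$.

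The key step is then to observe that any further edge leaving $\vertex{p,q}$ would have to be governed by \ref{edge:a}, \ref{edge:b} or \ref{edge:reset}, and each of these rules requires the \emph{strict} inequality $a_{p+1} < b_{q+1}$ in order to fire. Since we have just established $a_{p+1} = b_{q+1}$, that strict inequality fails, so none of these three rules applies at $\vertex{p,q}$; hence no edge of type \ref{edge:a}, \ref{edge:b} or \ref{edge:reset} has $\vertex{p,q}$ as its source, and the edge supplied by \ref{edge:both} is the only one.

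Finally I would dispose of the distinguished edges out of $\vertex{0,0}$ by ruling out $\vertex{p,q} = \vertex{0,0}$: the equality $a_{p+1} = b_{q+1}$ would then read $a_1 = b_1$, contradicting $a_1 \ne b_1$. (Equivalently, $\vertex{1,1}$ is not a vertex by the lemma identifying $\vertex{0,0}$ as the unique vertex on the diagonal, so $\vertex{0,0}$ is never the source of a \ref{edge:both} edge in the first place.) I do not expect a genuine obstacle here; the only point demanding care is to make sure that every edge-creation rule, including the three special edges at $\vertex{0,0}$, has been accounted for when using the positivity of both coordinates of $\vertex{p+1,q+1}$ to isolate rule \ref{edge:both}.
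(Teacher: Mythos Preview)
Your argument is correct and follows essentially the same route as the paper: identify that the edge $\vertex{p,q}\to\vertex{p+1,q+1}$ can only arise via rule \ref{edge:both}, deduce $a_{p+1}=b_{q+1}$, and observe that this equality blocks \ref{edge:a}, \ref{edge:b}, \ref{edge:reset}. The paper's proof is the same two-line idea; you have simply been more explicit in justifying why the edge must come from \ref{edge:both} and in disposing of the root case $\vertex{0,0}$.
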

\begin{proof}
Since there is an edge from $\vertex{p,q}$ to $\vertex{p+1,q+1}$ we must have $a_{j+1} = b_{k+1}$.
Thus none of the possibilities \ref{edge:a}, \ref{edge:b} or \ref{edge:reset} occur at $\vertex{p,q}$.
\end{proof}

If \eqref{path:diagonal} is a maximal diagonal (i.e.\ not a sub-path of a longer diagonal) then there are at least two edges with source $\vertex{p+d,q+d}$.
These edges have targets $\vertex{p+d+1,0}$ and $\vertex{0,q+d+1}$.
If we have $a_{p+d} < b_{q+d}$ there will also be edges from $\vertex{p+d,q+d}$ to $\vertex{0,0}$.

By a \define{vertical reset} in the graph we mean any edge of the form $\vertex{p,q} \to \vertex{0,q+1}$ and by a \define{horizontal reset} we mean any edge of the form $\vertex{p,q} \to \vertex{p+1,0}$.
The infinite path in $\Gamma$ that corresponds to $b$ is made up of horizontal flats, diagonals and vertical resets.
It may be that some of these flats are empty (in otherwords a diagonal may immediately follow a reset) and it may be that $b$ is coterminal with an infinite horizontal flat or diagonal.

\subsection{Characterizing specification}

Fix $\beta > 2$.
The purpose of this section is to characterize in terms of $a$ and $b$ when the subshift $\Sigma$ has specification to prove i.e.\ to prove Theorem~\ref{thm:specChar}.
We begin with some preliminary results.

\begin{proposition}
\label{prop:rts}
If $\beta > 2$ then for every interval $I \subset [0,1)$ there is $\tau \in \N$ and a subinterval $L \subset I$ such that $F^{\tau}(L) = [0,1)$ and $F^{\tau}$ restricted to $L$ is continuous.
\end{proposition}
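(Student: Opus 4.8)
The plan is to exploit that $F$ is piecewise affine with constant slope $\beta$. On each partition interval $J_i$ the map is continuous, and a one-line computation shows that the interior pieces are \emph{full branches}: $F(J_i) = [0,1)$ for every $1 \le i \le \ell - 1$, since on $J_i$ one has $\floor{\beta x + \alpha} = i$ and the endpoints of $J_i$ map to $0$ and (in the limit) to $1$. Because $\beta > 2$ and $\alpha \ge 0$ we have $\alpha + \beta > 2$, so $\ell = \ceil{\alpha + \beta} - 1 \ge 2$ and at least one full branch exists. This reduces the proposition to the claim that it suffices to produce $n \in \N$ and a subinterval $L' \subseteq I$ on which $F^n$ is continuous with $F^n(L') \supseteq J_i$ for some $1 \le i \le \ell - 1$. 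Indeed, pulling $J_i$ back through the continuous affine map $F^n|_{L'}$ gives a subinterval $L \subseteq L'$ with $F^n(L) = J_i$ and $F^n|_L$ continuous, whence $\tau = n+1$ works: $F^\tau(L) = F(J_i) = [0,1)$ and $F^\tau|_L$ is continuous.

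To establish the claim I would track how the forward images of a well-chosen subinterval grow. I construct intervals $V_0, V_1, V_2, \dots$, each contained in a single continuity piece of $F$, starting from $V_0 = I \cap J_i$ for some $i$ with $|I \cap J_i| > 0$. Given $V_k$ inside a single piece, $F|_{V_k}$ is affine, so $F(V_k)$ is an interval of length $\beta |V_k|$. If $F(V_k)$ already contains a full interior branch we stop; otherwise I claim $F(V_k)$ meets at most two of the pieces $J_0, \dots, J_\ell$. This is the key geometric observation: an interval meeting three consecutive pieces must contain the middle one entirely, and that middle piece, being flanked on both sides, is interior and hence a full branch, contradicting the current case. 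Thus one of the (at most two) intersections $F(V_k) \cap J$ has length at least $\tfrac12 |F(V_k)|$, and I take $V_{k+1}$ to be such an intersection, so that $V_{k+1}$ again lies in a single piece and $|V_{k+1}| \ge \tfrac{\beta}{2} |V_k|$.

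Termination is where $\beta > 2$ is essential: since $\tfrac{\beta}{2} > 1$ we get $|V_k| \ge (\beta/2)^k |V_0|$, which cannot stay bounded by $1$, so the construction must halt at some finite stage $m$ with $F(V_m) \supseteq J_i$ for an interior $i$. Pulling this full branch back successively through the affine homeomorphisms $F|_{V_m}, F|_{V_{m-1}}, \dots, F|_{V_0}$ yields a subinterval $L' \subseteq V_0 \subseteq I$ with $F^{m+1}(L') = J_i$ and $F^{m+1}|_{L'}$ continuous, which is exactly the claim with $n = m+1$. The main obstacle is precisely this growth-and-termination step: isolating the ``at most two pieces'' dichotomy so that restricting to one piece costs only a factor $2$, which the hypothesis $\beta > 2$ exactly offsets; the remaining work is the routine bookkeeping of the affine pullbacks and checking nondegeneracy of $L$.
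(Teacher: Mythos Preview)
Your proof is correct and follows essentially the same approach as the paper: both iterate forward along a sequence of subintervals each contained in a single continuity piece, observe that either the image already contains a full interior branch (equivalently, two discontinuities of $F$) or it meets at most two pieces so the larger piece has length at least $\tfrac{\beta}{2}$ times the previous, and use $\beta > 2$ to force termination; the desired $L$ is then obtained by pulling back. Your write-up is a bit more explicit about why the interior $J_i$ are full branches and why $\ell \ge 2$, but the structure and the key $\beta/2$ growth estimate are identical to the paper's.
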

\begin{proof}
Without loss of generality, we may assume $I$ is contained in a single interval of continuity of $F$.
Let $|I|$ denote the length of $I$.
We define two sequences $I_0, I_1, I_2,\dots$ and $L_0 \supset L_1\supset L_2\supset \cdots$ of intervals by the following recursive process.

Put $I_0 = I$ and $L_0 = I$.
Define $I_1$ to be a subinterval of $F(I_0)$ of maximal length that is contained in an interval of continuity of $F$.
The interval $L_1 \subset L_0$ then consists of those points in $L_0$ that map to $I_1$ under $F$.
Inductively, if $I_n$ and $L_n$ have been defined, let $I_{n+1}$ be an subinterval of $F(I_n)$ of maximal length that is contained in an interval of continuity of $F$ and let  $L_{n+1} \subset L_n$ be those points that map to $I_{n+1}$ under $F^{n+1}$.

By design $F^{n+1}$ is continuous when restricted to $L_n$ for all $n$.
If, for some $n$, the interval $F(I_n)$ contains two or more points of discontinuity of $F$ then $F^{n+2}(I_{n+1}) = [0,1)$ and we get the result with $L = L_{n+1}$ and $\tau = n+2$.
If $F(I_n)$ contains fewer than two points of discontinuity of $F$, then $|I_{n+1}| \ge \frac{\beta}{2}|I_n|$.
It must then be the case that $F(I_n)$ contains two or more points of discontinuity of $F$ for some $n$, otherwise $\beta > 2$ implies the size of the intervals $I_n$ would grow without bound.
\end{proof}

\begin{corollary}
\label{cor:symbrts}
If $\beta > 2$ then for every word $w \in \lang$ there exists a word $u \in \lang$ such that $w u x \in \Sigma$ for every $x \in \Sigma$.
\end{corollary}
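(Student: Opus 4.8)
The plan is to transfer the geometric conclusion of Proposition~\ref{prop:rts} into the symbolic world via the coding map $\Omega$. Given $w \in \lang$, I would first consider the set $I_w = \{ x \in [0,1) : \Omega(x)_1^{|w|} = w \}$ of points whose itinerary begins with $w$. Because $\Omega$ is monotone and the first $|w|$ symbols of $\Omega(x)$ are determined by the locations of $x, F(x), \dots, F^{|w|-1}(x)$ relative to the partition \eqref{eqn:alphaBetaPartition}, the set $I_w$ is an interval; it is nonempty because $w \in \lang$ and $\Sigma$ is the closure of $\Omega([0,1))$, so some $\Omega(z)$ begins with $w$ and hence $z \in I_w$.

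Next I would apply Proposition~\ref{prop:rts} to $I = I_w$, producing $\tau \in \N$ and a subinterval $L \subset I_w$ with $F^{\tau}(L) = [0,1)$ and $F^{\tau}$ continuous on $L$. The key point is that continuity of $F^{\tau}$ on the interval $L$ forces each $F^{j}(L)$, for $0 \le j < \tau$, to lie inside a single cell of the partition \eqref{eqn:alphaBetaPartition}; equivalently the length-$\tau$ word $v := \Omega(y)_1^{\tau}$ is the same for every $y \in L$. Since $L \subseteq I_w$ this common word begins with $w$, and moreover $\tau \ge |w|$: otherwise every $y \in L$ would satisfy $\Omega(F^{\tau} y)_1 = \Omega(y)_{\tau+1} = w_{\tau+1}$, forcing every point of $[0,1) = F^{\tau}(L)$ into a single partition cell, which is impossible since \eqref{eqn:alphaBetaPartition} has at least two cells. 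Writing $v = wu$, I would take this $u$ as the required word.

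To verify that $u$ works, I would use the identity $\Omega(y) = \Omega(y)_1^{\tau} \cdot \Omega(F^{\tau} y) = wu\, \Omega(F^{\tau} y)$ for $y \in L$. As $y$ ranges over $L$, surjectivity gives that $F^{\tau} y$ ranges over all of $[0,1)$, so $wu\, \Omega(z) \in \Sigma$ for every $z \in [0,1)$. Finally, prepending the fixed word $wu$ is a continuous self-map of $\{0,\dots,\ell\}^{\N}$, so it carries $\Sigma = \overline{\Omega([0,1))}$ into the closure of $\{ wu\, \Omega(z) : z \in [0,1) \} \subseteq \Sigma$; since $\Sigma$ is closed this yields $wux \in \Sigma$ for every $x \in \Sigma$.

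The step I expect to require the most care is exactly this passage from geometry to symbols: establishing that continuity of $F^{\tau}$ on $L$ makes the length-$\tau$ itinerary constant, and then upgrading from continuations of the special form $\Omega(z)$ with $z \in [0,1)$ — which are only dense in $\Sigma$ — to arbitrary $x \in \Sigma$ by invoking closedness. The remaining bookkeeping, namely that $I_w$ is a nonempty interval and that $\tau \ge |w|$, is routine.
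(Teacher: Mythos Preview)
Your proposal is correct and follows essentially the same approach as the paper: pass from the cylinder $[w]$ to the interval $I_w$, apply Proposition~\ref{prop:rts} to obtain $\tau$ and $L$, read off the common length-$\tau$ itinerary $wu$ on $L$, and then extend from $\Omega([0,1))$ to all of $\Sigma$ by closedness. The only cosmetic differences are that the paper deduces $\tau \ge |w|$ from the length estimate $|I_w| \le \beta^{-|w|}$ rather than your symbolic argument, and it phrases the final closedness step via arbitrarily long initial words of $x$ rather than via continuity of the prepend map.
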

\begin{proof}
Fix $w,v \in \lang$.
The cylinder sets $[w]$ and $[v]$ correspond to intervals $I_w$ and $I_v$ in $[0,1)$ of points whose encodings begin with $w$ and $v$, respectively.
Apply Proposition~\ref{prop:rts} to the interval $I_w$ to get $\tau \in \N$ and $L \subset I_w$ with the stated properties.
Since the intervals of continuity of $F$ have length at most $\frac{1}{\beta}$, we must have $|I_w| \le \frac{1}{\beta^{|w|}}$.
Hence $\tau \ge |w|$.

From $F^{\tau}(L) = [0,1)$ we get, for every $v \in \lang$, some $u\in \lang_{\tau-|w|}$ with $w u v \in \lang$.
Continuity of $F^{\tau}$ restricted to $L$ implies $u$ is independent of $v$.
Given $x \in \Sigma$ we can choose $v$ to be an arbitrarily long initial subword of $x$.
This implies $w u x$ is in $\Sigma$ because $\Sigma$ is closed.
\end{proof}

\begin{corollary}
\label{cor:graphrts}
If $\beta > 2$ then for every vertex $\xi$ in $\Gamma$ there is a path from $\xi$ to $\vertex{0,0}$. 
\end{corollary}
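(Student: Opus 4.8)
The plan is to realize the vertex $\xi$ as $\vtx(w)$ for an actual word $w \in \lang$, use Corollary~\ref{cor:symbrts} to extend $w$ by a word $u$ whose follower set is all of $\Sigma$, and then observe that $\beta > 2$ forces the terminal vertex $\vtx(wu)$ to carry a reset edge back to $\vertex{0,0}$. Concatenating the portion of $\pth(wu)$ that runs from $\xi$ to $\vtx(wu)$ with this reset edge then produces the desired path from $\xi$ to $\vertex{0,0}$.

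First I would fix a vertex $\xi$ and invoke the lemma stating that $\xi$ lies at the end of a path from $\vertex{0,0}$ labeled by an initial segment of either $a$ or $b$. Reading off that label gives a word $w \in \lang$ with $\vtx(w) = \xi$. Applying Corollary~\ref{cor:symbrts} to $w$ yields $u \in \lang$ with $wux \in \Sigma$ for every $x \in \Sigma$; in particular $wu \in \lang$, so $\pth(wu)$ is a genuine path from $\vertex{0,0}$, and since it extends $\pth(w)$ the subpath following $u$ runs from $\xi$ to $\vtx(wu)$.

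Next I would pin down $\vtx(wu) = \vertex{k_1(wu),k_2(wu)}$. The conclusion of Corollary~\ref{cor:symbrts} says exactly that $\foll(wu) = \Sigma$. Comparing this with \eqref{eqn:alphaBetaFollower}, which gives $\foll(wu) = \{ x \in \Sigma : \sigma^{k_1(wu)}(a) \preceq x \preceq \sigma^{k_2(wu)}(b) \}$, and using that $a$ and $b$ are respectively the infimum and supremum of $\Sigma$, the lower endpoint must satisfy $\sigma^{k_1(wu)}(a) \preceq a$; since $\sigma^{k_1(wu)}(a) \in \Sigma$ this forces $\sigma^{k_1(wu)}(a) = a$, and symmetrically $\sigma^{k_2(wu)}(b) = b$. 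In particular $a_{k_1(wu)+1} = a_1$ and $b_{k_2(wu)+1} = b_1$.

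Finally, I would use $\beta > 2$. One checks $a_1 = 0$ and $b_1 = \ell$ directly from the definitions of $a$ and $b$, and that $\beta > 2$ gives $\ell = \ceil{\alpha+\beta} - 1 \ge 2$. Hence any letter $c$ with $0 < c < \ell$ satisfies $a_{k_1(wu)+1} < c < b_{k_2(wu)+1}$, so rule~\ref{edge:reset} provides an edge from $\vtx(wu)$ to $\vertex{0,0}$; appending it to the subpath from $\xi$ to $\vtx(wu)$ completes the argument. The main obstacle is the middle step: recognizing that the full-follower-set conclusion of Corollary~\ref{cor:symbrts}, together with the extremality of $a$ and $b$, is precisely what guarantees that a \emph{reset} edge rather than merely some outgoing edge is available at $\vtx(wu)$. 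The role of $\beta > 2$ is then just to ensure, via $\ell \ge 2$, that a reset letter actually exists.
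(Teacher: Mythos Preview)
Your proof is correct and follows essentially the same route as the paper: realize $\xi$ as $\vtx(w)$, apply Corollary~\ref{cor:symbrts} to reach a vertex $\vtx(wu)$ with full follower set, and then use $\ell \ge 2$ (from $\beta > 2$) to obtain a reset edge to $\vertex{0,0}$. The only cosmetic difference is that the paper phrases the middle step as ``$\vtx(wu)$ has an outgoing edge for every symbol in $\{0,\dots,\ell\}$'' and then counts edges, whereas you go through \eqref{eqn:alphaBetaFollower} to get $a_{k_1(wu)+1}=0$ and $b_{k_2(wu)+1}=\ell$ directly; both arguments are equivalent.
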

\begin{proof}
Given a vertex $\xi \in \Gamma$ let $w \in \lang$ be any word with $\xi = \vertex{k_1(w),k_2(w)}$.
By Corollary~\ref{cor:symbrts} there is a word $u \in \lang$ such that the path $\pth(wu)$ in $\Gamma$ ends at a vertex that has an outgoing edge for every member of $\{0,\dots,\ell\}$.
Now $\ell \ge 2$ so \ref{edge:reset} implies that whenever a vertex has at least three outgoing edges, one of the edges terminates at $\vertex{0,0}$.
\end{proof}

We are now ready for the proof of Theorem~\ref{thm:specChar}.

\begin{proof}[Proof of Theorem~\ref{thm:specChar}]
Let $L = \max \diag(a)$ and let $N = \max \diag(b)$.
By Lemma~\ref{lem:graphpropone} every vertex $\vertex{i,j}$ of $\Gamma$ satisfies $i < N+1$ or $j < L+1$.
We claim that $\vtx(a_1^{N+1})$ is the only vertex with first coordinate $N+1$ and $\vtx(b_1^{L+1})$ is the only vertex with second coordinate $L+1$.
We prove the statement for $\vtx(a_1^{N+1})$; the proof for $\vtx(b_1^{L+1})$ is similar.
Lemma~\ref{lem:graphpropone} implies $\vtx(a_1^{N+1}) = \vertex{N+1,j}$ where $j$ is maximal such that $a_{N-j+2}^{N+1}=b_1^j$.
Lemma~\ref{lem:graphpropone} also implies that a word $w \in\lang_n$ with $\vtx(w) = \vertex{N+1,j'}$ satisfies $w_{n-N}^n = a_1^{N+1}$ and $j'$ is maximal such that $w_{n-j'+1}^n = b_1^{j'}$.
Since $b$ does not contain $a_1^{N+1}$ it must be that $j'<N-1$, but then $j'$ is maximal such that $b_1^{j'}$ is an ending subword of $a_1^{N+1}$ so $j'=j$.

From \ref{edge:a} we see that from a vertex $\vertex{i,j}$ with $i<N+1$ the word $a_{i+1}^{N+1}$ spells a path to a vertex with first coordinate $N+1$.
By the claim in the previous paragraph, this vertex is $\vtx(a_1^{N+1})$.
Similarly, from a vertex $\vertex{i,j}$ with $j<L+1$ the word $b_{j+1}^{L+1}$ spells a path to $\vtx(b_1^{L+1})$.
By Corollary~\ref{cor:graphrts}, there are paths $\eta_a$ and $\eta_b$ from $\vtx(a_1^{N+1})$ and $\vtx(b_1^{L+1})$ to $\vertex{0,0}$ with lengths we denote by $\tau_a$ and $\tau_b$.
These observations imply that from every vertex in $\Gamma$ there is a path to $\vertex{0,0}$ of length at most $\tau = \max\{N+1,L+1\} + \max\{\tau_a,\tau_b\}$.
By Lemma \ref{lem:graphproptwo}, the language of $\Sigma$ has specification with gap time $\tau$.

On the other hand, suppose $a$ contains arbitrarily long initial subwords of $b$.
(When $b$ contains arbitrarily long initial subwords of $a$ the argument is similar.)
We fix $\tau \in \N$ and show that there are words $w,v \in\lang$ for which $wuv \notin \lang$ for all $u\in\lang_{\tau}$.
By assumption, we can find $b_1^K$ in $a$ with $K > \tau$.  Assume the initial subword $b_1^K$ of $b$ begins in $a$ at index $q+1$.
That is, assume $a = a_1^{q} b_1^K a_{q+K+1}\cdots$.
By Lemma~\ref{lem:graphpropone}, if $q$ is chosen as small as possible, then $\vtx(a_1^{q+K}) = \vertex{q+K,K}$.
It follows that $\vtx(a_1^q) = \vertex{q,0}$ and that $b_1^K$ spells a path from $\vertex{q,0}$ to $\vertex{q+K,K}$.
Since both coordinates are increasing along this path, it follows that none of the vertices seen have any other outgoing edges.
By Lemma \ref{lem:graphproptwo}, if we let $w=a_1^q$, then $u=b_1^\tau$ is the only word in $\lang_{\tau}$ such that $wu\in\lang$ and $b_{\tau+1}$ is the only letter in $\{0,\dots,\ell\}$.
Thus we get the result by letting $v$ be any word in $\lang$ whose first letter is not $b_{\tau+1}$. 
\end{proof}

\section{Proof of Theorem~\ref{thm:mainTheorem}}
\label{sec:proof}

In this section we prove Theorem~\ref{thm:mainTheorem}.
We therefore fix $0 \le \alpha < 1$ and $\beta > 2$ and a \Holder{} continuous function $\phi : \Sigma \to \R$.
By hypothesis there is a bound $L \in \N$ on the set $\diag(a)$.
Our main tool in the proof of Theorem~\ref{thm:mainTheorem} is the following result giving a sufficient condition for a subshift to have unique equilibrium states.

\begin{theorem}
[{\cite[Theorem~C]{MR3046278}}]
\label{thm:use}
Let $\Sigma$ be a subshift on a finite alphabet with language $\lang$.
Suppose $\pref,\good,\suff \subset \lang$ satisfy $\pref \good \suff = \lang$.
Put
\[
\good(M) = \{ vwu \in \lang : v \in \pref, w \in \good, u \in \suff, |v| \le M , |u| \le M \}
\]
for each $M \in \N$.
A continuous function $\phi : \Sigma \to \R$ has a unique equilibrium state if all of the following conditions hold.
\begin{enumerate}
[label=\textup{\textbf{CT\arabic*}}.,ref=\textup{\textbf{CT\arabic*}}]
\item
\label{ct:spec}
The set $\good(M)$ has specification for all $M \in \N$.
\item
\label{ct:bowen}
The potential $\phi$ has the Bowen property.
\item
\label{ct:pressGap}
$\press(\phi,\pref \cup \suff) < \press(\phi,\lang)$.
\end{enumerate}
\end{theorem}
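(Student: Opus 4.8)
The statement is the abstract Climenhaga--Thompson uniqueness criterion, so the plan is to prove it by adapting Bowen's classical argument: under global specification an equilibrium state is forced to be a Gibbs measure and Gibbs measures are unique; here specification is available only on the good core $\good(M)$, and the prefix and suffix collections $\pref,\suff$ are tamed purely through the pressure gap \ref{ct:pressGap}. Write $P = \press(\phi,\lang)$. The first step is to use \ref{ct:pressGap} to show the good core carries the full pressure, $\press(\phi,\good(M)) = P$ for all large $M$. Every $w \in \lang_n$ factors as $w = pgs$ with $p \in \pref$, $g \in \good$, $s \in \suff$, and the Bowen property \ref{ct:bowen} converts $\phi$ of such a concatenation into $\phi(p) + \phi(g) + \phi(s)$ up to a uniform additive constant. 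Summing $e^{\phi(w)}$ over words whose prefix or suffix has length exceeding $M$ is therefore controlled by products of the form $\Lambda_j(\phi,\pref\cup\suff)\,\Lambda_{n-j}(\phi,\lang)$ with $j > M$; by \ref{ct:pressGap} the first factor is exponentially smaller than $e^{jP}$, so summing the resulting geometric tail shows these words contribute a vanishing fraction of $e^{nP}$ once $M$ is large. Hence $\good(M)$ has pressure $P$.

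Next I would construct a candidate equilibrium state. For each $w \in \good(M)_n$ choose $x_w \in [w]$ and form the atomic measures $\nu_n = \sum_{w} e^{\phi(w) - nP}\,\delta_{x_w}$ together with their \Cesaro{} averages $\mu_n = \frac1n\sum_{k=0}^{n-1}(\sigma^k)_\ast\nu_n$; let $\mu$ be a weak$^*$ subsequential limit, which is $\sigma$-invariant. The specification hypothesis \ref{ct:spec} is the engine behind a lower Gibbs bound: given a good word $w$ it lets me splice $w$ at a prescribed coordinate into a definite positive proportion of the length-$n$ good words using gap words of bounded length, and \ref{ct:bowen} makes the $\phi$-weights add up across the splice. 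This produces a uniform $C > 0$ with $\mu([w]) \ge C\,e^{\phi(w) - |w|P}$ for every $w \in \good(M)$. Combining the first step — that $\good(M)$ carries full pressure — with the standard entropy estimate for such weighted constructions yields $\entropy(\sigma,\mu) + \int \phi \intd\mu = P$, so $\mu$ is an equilibrium state by the variational principle \eqref{eqn:variational}.

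For uniqueness, let $\nu$ be any equilibrium state; since $\nu \mapsto \entropy(\sigma,\nu) + \int\phi\intd\nu$ is affine, ergodic decomposition lets me assume $\nu$ is ergodic. The variational principle forces a matching upper Gibbs bound: applying the Shannon--McMillan--Breiman theorem to $\nu$ and using $\entropy(\sigma,\nu) + \int\phi\intd\nu = P$, for $\nu$-almost every $x$ one has $\nu([x_1^n]) \le e^{S_n\phi(x) - n(P - \epsilon)}$ along all large $n$. Moreover the first step guarantees that a $\nu$-typical point spends an asymptotically full fraction of its time inside good words — otherwise $\nu$ would draw positive pressure from $\pref\cup\suff$, contradicting \ref{ct:pressGap} through \eqref{eqn:variational}. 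Comparing this upper bound against the lower Gibbs bound for $\mu$ on the good cylinders gives $\nu([w]) \le C^{-1}e^{\epsilon|w|}\,\mu([w])$ on a family of cylinders of full $\nu$-measure; letting $\epsilon \to 0$ yields $\nu \ll \mu$, and ergodicity forces $\nu = \mu$.

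The main obstacle is converting the abstract pressure gap \ref{ct:pressGap} into the two quantitative counting statements above: that long-prefix-or-suffix words are exponentially negligible (the first step) and that a typical orbit of an equilibrium state dwells in the good collection for a full fraction of its time (the uniqueness step). Both are exactly where the three hypotheses must interact, and both require care in bookkeeping the Bowen constant as it accumulates across concatenations. The lower Gibbs bound, by contrast, is comparatively routine once \ref{ct:spec} and \ref{ct:bowen} are available, following Bowen; the one subtlety there is ensuring the constant $C$ is uniform over all good words rather than degrading with $M$ or with $|w|$.
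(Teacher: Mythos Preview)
The paper does not prove this theorem at all: it is quoted verbatim as \cite[Theorem~C]{MR3046278} and used as a black box in Section~\ref{sec:proof}, so there is no argument in the paper to compare your proposal against. Your sketch is an outline of the original Climenhaga--Thompson proof rather than of anything the present authors do.

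That said, as a sketch of the cited result your outline captures the right architecture --- full pressure on $\good(M)$, construction of a Gibbs-like equilibrium state via specification on $\good(M)$ and the Bowen property, and an absolute-continuity argument for uniqueness --- but you should be aware that the actual proof in \cite{MR3046278} does not obtain uniqueness by showing $\nu \ll \mu$ directly from a pointwise Shannon--McMillan--Breiman comparison. The genuine difficulty you gloss over is that an arbitrary ergodic equilibrium state $\nu$ need not charge good cylinders in the uniform way your last paragraph assumes: the passage from ``typical orbits spend a full fraction of time in good words'' to a cylinder-by-cylinder bound $\nu([w]) \le C^{-1} e^{\epsilon |w|} \mu([w])$ is exactly the step that requires the more delicate machinery (the adapted partition sums and the ``upper Gibbs'' estimate proved in \cite{MR3046278}) rather than a straightforward appeal to SMB. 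If you were writing this up for real you would need to fill that gap; for the purposes of the present paper, however, the statement is simply imported and no proof is expected.
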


\subsection{A Language decomposition}
\label{sec:lang}

Let $\good \subset \lang$ consist of all words $w$ such that $\vtx(w)$ has second coordinate equal to $0$.
Let $\suff \subset \lang$ consist of the initial subwords of $b$.
Thus $\suff_n = \{ b_1^n \}$.
Allowing for the empty word, we have the decomposition $\lang = \good \suff$.
For $M \in \N$ we denote by $\good(M) \subset \lang$ the set of all words $w$ that can be written $w = gs$ with $g\in\good$ and $s \in \suff$ with $|s| \le M$.
Equivalently, $w \in \good(M)$ if and only if the second coordinate of $\vtx(w)$ is at most $M$.  

\begin{proposition}
\label{prop:specset}
If $\beta > 2$ and $\diag(a)$ is bounded then $\good(M)$ has specification for all $M \in \N$.
\end{proposition}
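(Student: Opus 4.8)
The plan is to argue entirely in the graph $\Gamma$ of Section~\ref{subsec:graph}, translating between words and paths via Lemma~\ref{lem:graphproptwo}. Given $w^0,\dots,w^n \in \good(M)$, each path $\pth(w^i)$ ends at the vertex $\vtx(w^i)=\vertex{k_1(w^i),k_2(w^i)}$ with $k_2(w^i)\le M$. My strategy is to glue consecutive words by routing each such terminal vertex back to the root $\vertex{0,0}$ and then spelling the next word from $\vertex{0,0}$, which is legitimate because $w^{i+1}$ labels a path starting at $\vertex{0,0}$; the concatenation $w^0 v^1 w^1\cdots v^n w^n$ then labels a single walk from $\vertex{0,0}$ and so lies in $\lang$. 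For this to give specification I need every connecting word $v^i$ to have the \emph{same} length $\tau$, which I will arrange by padding with turns around the self-loop at $\vertex{0,0}$, whose existence is guaranteed because $\beta>2$ forces $\ell\ge 2$. The whole proposition therefore reduces to producing a bound $R=R(M)$, independent of the word, on the length of a return path to $\vertex{0,0}$ from any vertex $\vertex{i,j}$ with $j\le M$; I then set $\tau=R$.

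The heart of the matter is this uniform bound, and the obstacle is that infinitely many vertices satisfy $j\le M$. Corollary~\ref{cor:graphrts}, the only place $\beta>2$ enters, supplies a finite return path from each individual vertex, so the real content is uniformity. Put $L=\max\diag(a)$. The key structural input is that any vertex $\vertex{i,j}$ with $j\le i$ forces $b_1^j$ to be a suffix of $a_1^i$, whence $j\in\diag(a)$ and so $j\le L$; equivalently, once $j>L$ we must have $i<j$. Consequently every vertex with $j\le M$ and $i>M$ automatically has $j\le L$, while all remaining vertices with $j\le M$ lie in the fixed finite box of vertices with $i\le M$. I take $\tau_0$ to be the maximum return length over the finitely many vertices in $\{\vertex{i,j}:i\le M,\ j\le M\}$ together with the finitely many vertices $\{\vertex{0,k}:k\le L+1\}$, each return length being finite by Corollary~\ref{cor:graphrts}.

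It remains to handle the infinite family of vertices with $i>M$ and $j\le M$; by the structural input these all satisfy $j\le L<i$, in particular $i>j$. From such a vertex I follow the diagonal: whenever $a_{i'+1}=b_{j'+1}$ rule~\ref{edge:both} forces the unique edge $\vertex{i',j'}\to\vertex{i'+1,j'+1}$, and since $i'-j'$ stays constant and positive the inequality $j'\le L$ is maintained throughout; hence the diagonal has length at most $L$ and ends at a vertex $\vertex{i',j'}$ with $j'\le L$ and $a_{i'+1}<b_{j'+1}$. At that vertex either $b_{j'+1}-a_{i'+1}\ge 2$, so rule~\ref{edge:reset} gives a single edge to $\vertex{0,0}$, or $b_{j'+1}=a_{i'+1}+1$, so rule~\ref{edge:b} gives an edge to $\vertex{0,j'+1}$ with $j'+1\le L+1$, from which the return length is bounded by $\tau_0$. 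Thus every vertex with $j\le M$ reaches $\vertex{0,0}$ within $R:=L+1+\tau_0$ steps, which establishes the uniform bound and completes the argument once each $v^i$ is taken to spell a return path padded by self-loops at $\vertex{0,0}$ to length $\tau=R$. I expect the diagonal control of this infinite family to be the main difficulty; the gluing, the padding to a common gap length, and the appeal to Lemma~\ref{lem:graphproptwo} are straightforward.
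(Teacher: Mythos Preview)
Your argument is correct, but it takes a different route from the paper's. The paper first reduces to $M>L$, then proves a uniqueness statement: for every $K>L$ the only vertex with second coordinate $K$ is $\vtx(b_1^K)$. This lets it treat \emph{all} vertices $\vertex{i,j}$ with $j\le M$ uniformly: from any such vertex, spell $b_{j+1}^M$; each step raises the second coordinate by one (via \ref{edge:both} or \ref{edge:b}), so after $M-j$ steps one lands at the unique vertex $\vtx(b_1^M)$, and from there a single fixed return path to $\vertex{0,0}$ (furnished by Corollary~\ref{cor:graphrts}) finishes the job with gap at most $M+\tau_M$.

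Your approach instead splits off the finite box $\{\vertex{i,j}:i\le M,\ j\le M\}$ and handles the infinite strip $i>M$ by following forced diagonals, using the observation $j\le i\Rightarrow j\le L$ to bound their length and then escaping via \ref{edge:b} or \ref{edge:reset} into the finite set $\{\vertex{0,k}:k\le L+1\}$. This is sound and in one respect more explicit than the paper (you spell out the padding by the self-loop at $\vertex{0,0}$, which the paper leaves implicit). The trade-off is that the paper's ``funnel along $b$'' avoids any case analysis and makes the dependence on $M$ transparent, while your argument requires separately bounding two finite collections of return times and tracking the diagonal.
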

\begin{proof}
Let $b_1^L$ be the longest initial subword of $n$ that appears in $a$.
It suffices to consider the case $M>L$.
By Lemma~\ref{lem:graphproptwo} it suffices to show that there exists a $\tau \in \N$ such that from any vertex with second coordinate at most $M$, there is a path to $\vertex{0,0}$ of length at most $\tau$.  

It follows from Lemma \ref{lem:graphpropone} that for all $K>L$ the only vertex with second coordinate equal to $K$ is $\vtx(b_1^K)$.
Indeed, if $\vtx(w) = \vertex{i,K}$ for some $w\in\lang$, then $i$ and $K$ are maximal such that $a_1^i$ and $b_1^K$ are ending subwords of $w$.
Since $a$ does not contain $b_1^K$, it must be that $i < K$.  Hence, $a_1^i$ is an ending subword of $b_1^K$ and is maximal in this regard.
This implies the first coordinate of $\vtx(b_1^K)$ is also $i$, hence $\vtx(w) = \vtx(b_1^K)$.
In particular, $\vtx(b_1^{M})$ is the only vertex with second coordinate $M$.

By Corollary~\ref{cor:graphrts}, there is a path $\eta$ from  $\vtx(b_1^{M})$ to $\vertex{0,0}$.
Let the length of this path be $\tau_M$.
For every $j < M$ the word $b_{j+1}^M$ spells a path from any vertex $\vertex{i,j}$ to a vertex with second coordinate $M$, which must be $\vtx(b_1^{M})$.
This shows that from any vertex with second coordinate at most $M$, there is a path to $\vertex{0,0}$ of length at most $\tau_M+M$.
\end{proof}

\begin{corollary}
\label{cor:specset}
If $x \in \Sigma$ does not contain arbitrarily long initial subwords of $b$, then $x$ belongs to a subshift of $\Sigma$ that has specification.  
\end{corollary}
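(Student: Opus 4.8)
The plan is to exhibit an explicit subshift $\Sigma' \subseteq \Sigma$ containing $x$ and to verify specification for it using the graph $\Gamma$. By hypothesis there is $M \in \N$ such that $b_1^{M+1}$ is not a factor of $x$; enlarging $M$ I may assume $M > L$, where $L = \max\diag(a)$. Let $\Sigma'$ be the subshift obtained from $\Sigma$ by forbidding the word $b_1^{M+1}$, that is, $\Sigma' = \{ y \in \Sigma : b_1^{M+1} \textup{ is not a factor of } y \}$. This is a closed, shift invariant subset of $\Sigma$, and $x \in \Sigma'$ by the choice of $M$. Writing $\Gamma_M$ for the subgraph of $\Gamma$ spanned by the vertices $[i,j]$ with $j \le M$, a word lies in the language of $\Sigma'$ precisely when it is extendable and spells a path in $\Gamma_M$ beginning at $[0,0]$: a factor $b_1^{M+1}$ ending at position $n$ is exactly what forces the second coordinate of $\vtx$ to reach $M+1$.

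By Lemma~\ref{lem:graphproptwo}, verifying that the language of $\Sigma'$ has specification reduces to producing a single $\tau \in \N$ so that from every vertex of $\Gamma_M$ that occurs as $\vtx(w)$ for some $w$ in the language of $\Sigma'$, there is a path back to $[0,0]$ lying entirely in $\Gamma_M$ and of length at most $\tau$; one then reads the label of such a path, padded to length $\tau$ using the loop at $[0,0]$, as the connecting word. I would build this return path in two easy stages. First, from a vertex $[i,j]$ one lowers the second coordinate to $0$: if $a_{i+1} < b_{j+1}$ one takes the ``follow $a$ alone'' edge~\ref{edge:a} to $[i+1,0]$, while if $a_{i+1} = b_{j+1}$ one is forced along a diagonal which, because the vertex lies in the language of $\Sigma'$, must branch off to second coordinate $0$ before the second coordinate exceeds $M$. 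Second, from a vertex $[i',0]$ one resets the first coordinate: reading successive letters of $b$ follows $b$, and since $\diag(a)$ is bounded by $L$ the sequences $\sigma^{i'}(a)$ and $b$ disagree within $L$ symbols, so a ``follow $b$ alone'' edge~\ref{edge:b} returns to first coordinate $0$ at a vertex $[0,t]$ with $t \le L+1 \le M$, all within $\Gamma_M$.

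The main obstacle is the final stage: passing from a vertex of the form $[0,t]$ with $t \le L+1$ back to $[0,0]$ while staying in $\Gamma_M$. Every edge into $[0,0]$ is a reset edge~\ref{edge:reset}, so this amounts to reaching, inside $\Gamma_M$, a vertex $[p,q]$ admitting a gap $a_{p+1} < c < b_{q+1}$. The two reductions above do not obviously close up to produce such a vertex, since iterating them may push the second coordinate past $M$, and this is exactly the point at which the hypothesis $\beta > 2$ is indispensable; it enters, as already remarked, only through Corollary~\ref{cor:graphrts}, which supplies a path to $[0,0]$ in the full graph $\Gamma$. The task is to refine Corollary~\ref{cor:graphrts} to a path whose second coordinate stays bounded, so that for $M$ taken large enough the path lies in $\Gamma_M$. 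I expect this refinement to come from applying the expansion argument of Proposition~\ref{prop:rts} to the closed $F$ invariant set cut out by forbidding $b_1^{M+1}$, checking that the connecting itineraries it produces themselves avoid long initial segments of $b$. Granting the resulting uniform return bound $\tau$, the language of $\Sigma'$ has specification, and since $x \in \Sigma'$ the corollary follows.
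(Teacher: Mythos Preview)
Your proposal is not a complete proof: you explicitly leave the final stage unproved, writing that you ``expect'' a refinement of Corollary~\ref{cor:graphrts} and then proceed ``granting the resulting uniform return bound''.  That is the entire content of the corollary, so the argument as written is a sketch with its main step missing.

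There is also an error in your Stage~1.  You assert that if $[i,j]$ lies on a diagonal then the diagonal ``must branch off to second coordinate $0$ before the second coordinate exceeds $M$'' because the vertex ``lies in the language of $\Sigma'$''.  But membership of $[i,j]$ in $\Gamma_M$ only records that the path \emph{arriving} at $[i,j]$ stayed in $\Gamma_M$; it says nothing about where the unique outgoing diagonal edge leads.  If $\pth(b)$ happens to be on a diagonal at step $M+1$ (equivalently $a_{i_M+1}=b_{M+1}$ where $\vtx(b_1^M)=[i_M,M]$), then the only edge out of $[i_M,M]$ goes to $[i_M+1,M+1]\notin\Gamma_M$, and your Stage~1 fails.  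This is fixable by choosing $M$ so that step $M+1$ of $\pth(b)$ is a flat or reset (such $M$ exist, else $\pth(b)$ would eventually be an infinite diagonal, contradicting Corollary~\ref{cor:graphrts}), but you do not make this choice.

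The paper's proof is far shorter and avoids all of this: having already established Proposition~\ref{prop:specset}, one simply observes that $\lang(\Sigma')\subset\good(M)$ and cites that proposition.  You are essentially trying to reprove Proposition~\ref{prop:specset} from scratch under the additional constraint that all connecting paths stay in $\Gamma_M$; this is harder, and the paper sidesteps it.  Your instinct that one should worry about the connecting words staying inside $\Sigma'$ is reasonable, and one can indeed tighten the paper's argument by choosing $M$ large enough that the fixed return path $\eta$ of Proposition~\ref{prop:specset} itself has second coordinate bounded by $M$ throughout; but this is a one--line adjustment to the choice of $M$, not a new three--stage construction.
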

\begin{proof}
Let $b_1^K$ be the longest initial subword of $b$ that appears in $x$ and choose $M > \max\{ K, L \}$.
Let $\Sigma' \subset \Sigma$ contain every sequence with the property that the corresponding path in $\Sigma$ never visits vertices with second coordinate greater than $M$.
By Lemma~\ref{lem:graphpropone}, the set $\Sigma'$ is shift invariant and contains $x$.
If a sequence $y$ is not in $\Sigma'$, then again by Lemma~\ref{lem:graphpropone}, we have
\[
(y_n,\dots,y_{n+M+1}) = (b_1,\dots,b_{M+1})
\]
for some $n \in \N$.
The open cylinder
\[
\{z \in \Sigma : (z_n,\dots,z_{n+M+1}) = (b_1,\dots,b_{M+1}) \}
\]
contains $y$ and is disjoint from $\Sigma'$.
Hence, $\Sigma'$ is closed.
The language of $\Sigma'$ has specification because the language of $\Sigma'$ is contained in $\mathcal{G}(M)$ and this has specification by Proposition~\ref{prop:specset}. 
\end{proof}

\subsection{Comparison sequences}
\label{subsec:compseq}

The sequence $b$ does not end in an infinitely long diagonal by Corollary~\ref{cor:graphrts}.
Therefore the path corresponding to $b$ has either infinitely many resets or ends in an infinite flat.
In other words $b$ has the form
\begin{equation}
\label{eqn:bseq}
b
=
\underbrace{b_1 \cdots b_{n_1}}_{\textup{flat}\vphantom{dgfl}}
\underbrace{a_1 \cdots a_{m_1}}_{\textup{diagonal}\vphantom{dgfl}}
\underbrace{b_{n_1 + m_1 + 1}}_{\textup{reset}\vphantom{dgfl}}
\underbrace{b_{n_1 + m_1 + 2} \cdots b_{n_2}}_{\textup{flat}\vphantom{dgfl}}
\underbrace{a_1 \cdots a_{m_2}}_{\textup{diagonal}\vphantom{dgfl}}
\underbrace{b_{n_2 + m_2 + 1}}_{\textup{reset}\vphantom{dgfl}}
\cdots
\end{equation}
for some $m_i,n_i \in \N$ with $n_{i+1} \ge n_i + m_i + 1$ for all $i$.
Note that we may have $n_{i+2} = n_i + m_i + 1$ in which case $b$ transitions immediately from a reset to a diagonal, and that $b$ may have only finitely many diagonals.
The sequence $b$ therefore satisfies (exactly) one of the following criterion.
\begin{enumerate}
[label=\textup{\textbf{B\arabic*}}.,ref=\textup{\textbf{B\arabic*}}]
\item
\label{b:flats}
There are infinitely many $n$ with $b_n$ belonging to a flat of $b$ and $b_n \ne 1$.
\item
\label{b:manyDiags}
All but finitely many of the $b_n$ belonging to flats equal $1$.
\end{enumerate}

We next define auxiliary sequences $c$ and $d$ that will be used in proving Theorem~\ref{thm:mainTheorem} according to whether we are in case \ref{b:flats} or \ref{b:manyDiags} respectively.
First, define $c$ by editing $b$ as follows: change every letter of $b$ belonging to a flat to a $1$.
Formally, if $Q$ is the set of indices $n$ such that $b_n$ corresponds to a flat in $\Gamma$, define
\[
c_n = \begin{cases} 1 & n \in E \\ b_n & \textup{otherwise} \end{cases}
\]
for all $n \in \N$.

\begin{lemma}
\label{lem:cAllowed}
The sequence $c$ belongs to $\Sigma$.
\end{lemma}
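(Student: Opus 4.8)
The plan is to verify the order condition of Lemma~\ref{lem:lexshift}: it suffices to prove $a \preceq \sigma^m(c) \preceq b$ for every $m \ge 0$. Two elementary observations set up the whole argument. Since $0 \in J_0$ we have $a_1 = 0$, so $a_1$ is the smallest symbol of the alphabet; and by Lemma~\ref{lem:flatRestrictions} every letter of $b$ belonging to a (horizontal) flat is at least $1$, so replacing each such letter by $1$ never increases it. Consequently $c_n \le b_n$ for all $n$. Moreover the decomposition~\eqref{eqn:bseq} assigns to each index $n$ exactly one of three types, according to whether the $n$-th edge of $\pth(b)$ lies in a flat, lies in a diagonal, or is a reset; on diagonal and reset indices $c$ agrees with $b$, while on flat indices $c$ carries the symbol $1$.

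First I would dispatch the upper bound. From $c_n \le b_n$ for all $n$ we get $\sigma^m(c) \preceq \sigma^m(b)$ coordinatewise, hence lexicographically, and since $b \in \Sigma$ Lemma~\ref{lem:lexshift} gives $\sigma^m(b) \preceq b$. Transitivity yields $\sigma^m(c) \preceq b$.

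For the lower bound I would fix $m$ and split on the type of the index $m+1$. If $m+1$ is a flat or a reset index then $c_{m+1} \ge 1 > 0 = a_1$ (a reset label strictly exceeds the corresponding letter of $a$, so is positive), and therefore $a \prec \sigma^m(c)$ immediately. The remaining case is the heart of the matter: suppose $m+1$ is the $j$-th index of a diagonal, which spells $a_1 \cdots a_{m_i}$ and is immediately followed by a reset letter $r$. Then $\sigma^m(c)$ begins $a_j \cdots a_{m_i}\, r \cdots$, while $\sigma^{j-1}(a)$ begins $a_j \cdots a_{m_i}\, a_{m_i+1} \cdots$; the two agree through $a_{m_i}$, and at the next coordinate $c$ carries $r$. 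Because the edge leaving the diagonal is a reset rather than a continuation of the diagonal, its label satisfies $r > a_{m_i+1}$, so $\sigma^m(c) \succ \sigma^{j-1}(a)$. As $a \in \Sigma$, Lemma~\ref{lem:lexshift} gives $\sigma^{j-1}(a) \succeq a$, and hence $a \prec \sigma^m(c)$.

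I expect this diagonal case to be the main obstacle: one has to see that a diagonal run of $c$ reproduces an initial segment of $a$, and that the comparison with $a$ is decided---in $c$'s favour---precisely at the reset letter ending the diagonal, whose value strictly exceeds the symbol $a_{m_i+1}$ that $a$ places there. No separate treatment is needed when $b$ ends in an infinite flat (case~\ref{b:manyDiags}, where $c$ is eventually $1^\infty$), since every index of that tail is a flat index and falls under the trivial sub-case.
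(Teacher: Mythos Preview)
Your argument is correct and follows a genuinely different route from the paper's. The paper works through the graph $\Gamma$: starting from $b_1^n$, it locates the rightmost flat index $m$ with $b_m \ne 1$, observes that $\pth(b_1^{m-1})$ ends at a vertex $\vertex{0,m-1}$ on the horizontal axis, and uses the \ref{edge:reset} edge there (labelled $1$, since $0 < 1 < b_m$) to redirect the path to $\vertex{0,0}$; the tail $b_{m+1}^n$ then labels a valid continuation from $\vertex{0,0}$ because it is a subword of $b$. Iterating this edit from right to left yields $c_1^n \in \lang$.

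You instead verify the criterion of Lemma~\ref{lem:lexshift} directly. The upper bound $\sigma^m(c) \preceq b$ drops out of the coordinatewise inequality $c_n \le b_n$, and the lower bound is a case split on whether the index $m+1$ lies in a flat, a reset, or a diagonal of $b$; only the diagonal case needs thought, and there you compare $\sigma^m(c)$ with $\sigma^{j-1}(a) \succeq a$, the strict inequality appearing precisely at the reset letter $r > a_{m_i+1}$ that terminates the diagonal. This is more self-contained---it needs only Lemma~\ref{lem:flatRestrictions} and the reset inequality---while the paper's route leans on the graph machinery of Section~\ref{subsec:graph} (in particular Lemma~\ref{lem:graphproptwo}). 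Both are short; yours makes the role of the reset letter in controlling the lower bound more explicit, whereas the paper's buries that comparison inside the graph description.
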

\begin{proof}
We prove that $c_1 \cdots c_n$ belongs to $\lang$ for all $n \in \N$.
Fix $n \in \N$ and let $m \le n$ be maximal with $m \in Q$ and $b_m \ne 1$.
(If no such $m$ exists then $c_1 \cdots c_n = b_1 \cdots b_n$ which certainly belongs to $\lang$.)
By \ref{edge:reset} there is an edge from $\vertex{0,m}$ to $\vertex{0,0}$ labeled 1.
Since every word corresponds to some path starting at $\vertex{0,0}$ the concatenation $b_1 \cdots b_{m-1} 1 b_{m+1} \cdots b_n$ belongs to $\lang$.
In particular $b_j \cdots b_{m-1} 1 b_{m+1} \cdots b_n$ belongs to $\lang$ for all $j < m-1$.
Repeating this argument for all indices $k$ at which $b_k \ne 1$ corresponds to an edge in a flat gives $c_1 \cdots c_n$ in $\lang$.
\end{proof}

The sequence $c$ has the form
\begin{equation}
\label{eqn:cseq}
c
=
\underbrace{1 \cdots 1\vphantom{b_{m_1}}}_{\textup{``flat''}\vphantom{dgfl}}
\underbrace{a_1 \cdots a_{m_1}}_{\textup{``diagonal''}\vphantom{dgfl}}
\underbrace{b_{n_1 + m_1 + 1}}_{\textup{``reset''}\vphantom{dgfl}}
\underbrace{1 \cdots 1\vphantom{b_{m_1}}}_{\textup{``flat''}\vphantom{dgfl}}
\underbrace{a_1 \cdots a_{m_2}}_{\textup{``diagonal''}\vphantom{dgfl}}
\underbrace{b_{n_2 + m_2 + 1}}_{\textup{``reset''}\vphantom{dgfl}}
\cdots
\end{equation}
where our labels of flat, diagonal and reset are now only heuristic, as $\pth(c)$ may not truly traverse the flats and diagonals of $\pth(b)$.

Our second sequence $d$ will be used in case \ref{b:manyDiags} and is defined as follows.
Fix by Corollary~\ref{cor:graphrts} a path $\gamma$ in $\Gamma$ from $\vertex{k_1(b_1^{L+1}),L+1}$ to $\vertex{0,0}$ and let $\eta$ be the corresponding word.
Define a sequence $d$ in $\{0,\dots,\ell\}^\N$ by simultaneously replacing every reset $b_{n_i + m_i + 1}$ in $b$ with the word $a_{m+i + 1} b_1 \cdots b_{L+1} \eta$.
Write $B_{n_i + m_i + 1} = a_{m+i + 1} b_1 \cdots b_{L+1} \eta$ for brevity.
The sequence $d$ has the form
\begin{equation}
\label{eqn:dseq}
d
=
\underbrace{b_1 \cdots b_{n_1}}_{\textup{``flat''}\vphantom{dgfl}}
\underbrace{a_1 \cdots a_{m_1}}_{\textup{``diagonal''}\vphantom{dgfl}}
\underbrace{B_{n_1 + m_1 + 1}}_{\textup{to } \vertex{0,0}\vphantom{dgfl}}
\underbrace{b_{n_1 + m_1 + 2} \cdots b_{n_2}}_{\textup{``flat''}\vphantom{dgfl}}
\underbrace{a_1 \cdots a_{m_2}}_{\textup{``diagonal''}\vphantom{dgfl}}
\underbrace{B_{n_2 + m_2 + 1}}_{\textup{to } \vertex{0,0}\vphantom{dgfl}}
\cdots
\end{equation}
where again our labels of flats and diagonals are now only heuristic, as $\pth(d)$ may not truly traverse the flats and diagonals of $\pth(b)$.

\begin{lemma}
\label{lem:dAllowed}
The sequence $d$ belongs to $\Sigma$.
\end{lemma}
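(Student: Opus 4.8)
The plan is to exhibit $d$ as the label of an infinite path in $\Gamma$ starting at $\vertex{0,0}$. By Lemma~\ref{lem:graphproptwo} every prefix of such a $d$ then lies in $\lang$, and since $\Sigma$ is closed this gives $d \in \Sigma$. I would build the path block by block along the decomposition \eqref{eqn:dseq}, showing that each inserted word $B_{n_i+m_i+1}$ drives the path back to the root $\vertex{0,0}$; thereafter the next flat--diagonal portion can be read afresh from $\vertex{0,0}$ and the argument repeats.

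First I would observe that each flat--diagonal portion $b_{n_{i-1}+m_{i-1}+2}\cdots b_{n_i+m_i}$ is a contiguous subword of $b$, hence lies in $\lang$ and spells a path from $\vertex{0,0}$. By Lemma~\ref{lem:graphpropone} this path ends at a vertex $\vertex{m_i,q}$: the first coordinate equals $m_i$ because the portion terminates in the diagonal $a_1\cdots a_{m_i}$, which is preceded either by nothing or by a flat letter different from $a_1=0$ (Lemma~\ref{lem:flatRestrictions}), so the terminal agreement with $a$ has length exactly $m_i$; and $q$ denotes the length of the longest terminal segment agreeing with an initial segment of $b$.

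The crucial step is to check that reading $a_{m_i+1}$ from $\vertex{m_i,q}$ is a horizontal reset, i.e.\ that $a_{m_i+1} < b_{q+1}$, so that rule~\ref{edge:a} sends the path to $\vertex{m_i+1,0}$. Here I would use that the terminal segment of length $q$ equals $b_1\cdots b_q$ and occupies positions $n_i+m_i-q+1,\dots,n_i+m_i$ of $b$; thus $\sigma^{n_i+m_i-q}(b)$ opens with $b_1\cdots b_q$ and is followed by the reset letter $b_{n_i+m_i+1}$, so $\sigma^{n_i+m_i-q}(b)\preceq b$ (Lemma~\ref{lem:lexshift}) yields $b_{n_i+m_i+1}\le b_{q+1}$, and the defining inequality $a_{m_i+1}<b_{n_i+m_i+1}$ of a reset then gives $a_{m_i+1}<b_{q+1}$. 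From $\vertex{m_i+1,0}$ the word $b_1\cdots b_{L+1}$ can be read one coordinate at a time, since from any vertex of second coordinate $j$ the letter $b_{j+1}$ is legal and advances the second coordinate to $j+1$. Because $a_{m_i+1}<b_1=\ell$ and, as $\diag(a)$ is bounded by $L$, the word $b_1\cdots b_{L+1}$ does not occur in $a$, Lemma~\ref{lem:graphpropone} identifies the terminal vertex precisely as $\vertex{k_1(b_1^{L+1}),L+1}$, from which $\eta$ returns the path to $\vertex{0,0}$ by construction.

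Assembling these steps yields the infinite path and hence $d\in\Sigma$; the case in which $b$ has only finitely many resets is the same, with the eventual infinite flat read directly from $\vertex{0,0}$. I expect the reset inequality $a_{m_i+1}<b_{q+1}$ to be the main obstacle: because $\pth(d)$ arrives at $\vertex{m_i,q}$ with $q$ typically far smaller than the second coordinate $n_i+m_i$ that $\pth(b)$ reaches at the same point, one cannot simply import the reset inequality from $b$'s own path, but must instead recover it from the self-domination $\sigma^k(b)\preceq b$ at the repeated initial segment.
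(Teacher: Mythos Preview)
Your argument is correct and follows the paper's overall strategy: build $\pth(d)$ block by block, showing inductively that after each inserted word $B_{n_i+m_i+1}$ the path returns to $\vertex{0,0}$, so that the next flat--diagonal segment can be read afresh from the root and Lemma~\ref{lem:graphproptwo} applies. The difference lies in the key step, namely verifying that from the post-diagonal vertex $\vertex{m_i,q}$ the letter $a_{m_i+1}$ labels an \ref{edge:a} edge. The paper does this indirectly: it invokes Lemma~\ref{lem:cAllowed} to conclude that $b_{n_i+m_i+1}$ is already a legal outgoing label at that vertex, and then uses $a_{m_i+1}<b_{n_i+m_i+1}$ to force at least two outgoing edges, so the one labelled $a_{m_i+1}$ must be of type \ref{edge:a}. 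You instead derive $a_{m_i+1}<b_{q+1}$ directly from the self-domination $\sigma^{n_i+m_i-q}(b)\preceq b$ combined with the reset inequality. Both routes encode the same observation; yours is more self-contained and sidesteps the paper's somewhat tangled cross-references between $c$ and $d$.
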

\begin{proof}
The proof is similar to the proof of Lemma~\ref{lem:cAllowed}.
In particlar, it suffices to prove that if $x$ is obtained from $c$ by replacing one $b_{n_i+m_i+1}$ with the corresponding block $B_{n_i+m_i+1}$, then $x$ is allowed and the initial subword of $x$ through $B_{n_i+m_i+1}$ gives a path in $\Gamma$ that ends at $\vertex{0,0}$.
It suffices to show that the path in $\Gamma$ corresponding to the initial subword of $c$ through $a_1^{m_i}$ ends at a vertex $\vertex{m_i,j}$ for some $j$ and that $a_{m_i+1}$ labels an edge  from this vertex to $\vertex{m_i+1,0}$.
We proceed by induction on $i$.
For $i=1$, the initial subword $c_1\cdots a_1^{m_1}$ is some number of $1$'s followed by $a_1^{m_1}$ so the path corresponding to this subword ends at the same vertex the path corresponding to $a_1^{m_1}$ does, which is $\vertex{m_1,j}$ for some $j$.

By Lemmas \ref{lem:cAllowed} and \ref{lem:graphproptwo}, there is an outgoing edge at $\vertex{m_i,j}$ labeled $b_{n_1+m_1+1}$.
Since $b_{n_1+m_1+1}$ is lexicographically greater than $a_{m_1+1}$, we see from \ref{edge:a}, \ref{edge:b} that there are at least two outgoing edges at $\vertex{m_1,j}$, one labeled $a_{m_i+1}$ going to $\vertex{m_i+1,0}$ and one labeled $b_{n_1+m_1+1}$ to a vertex with first coordinate equal to $0$.  

If for some $i$ the path corresponding to the initial subword of $c$ through $b_{n_i+m_i+1}$ ends at a vertex with first coordinate equal to $0$, then it follows that the path corresponding to the initial subword of $c$ through $a_1^{m_{i+1}}$ ends at a vertex with first coordinate $m_{i+1}$.
There is always an outgoing edge labeled $a_{m_{i+1}+1}$ from such a vertex.
By Lemmas~\ref{lem:cAllowed} and \ref{lem:graphproptwo}, there is also an outgoing edge labeled $b_{n_{i+1}+m_{i+1}+1} > a_{m_{i+1}}$.
It must be that the former edge points to $\vertex{m_{i+1}+1,0}$ and the latter edge points to a vertex with first coordinate equal to $0$.
\end{proof}

\begin{lemma}
\label{lem:dspec}
The sequence $d$ is contained in a subshift of $\Sigma$ that has specification.
\end{lemma}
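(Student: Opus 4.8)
The plan is to deduce the lemma from Corollary~\ref{cor:specset}: since $d \in \Sigma$ by Lemma~\ref{lem:dAllowed}, it suffices to show that $d$ does not contain arbitrarily long initial subwords of $b$. By Lemma~\ref{lem:graphpropone}, an occurrence of $b_1^K$ in $d$ forces $\pth(d)$ to pass through a vertex whose second coordinate is at least $K$, so the whole problem reduces to producing a uniform bound $M$ on the second coordinates of the vertices visited by $\pth(d)$. I would track these second coordinates block by block, using that we are in case \ref{b:manyDiags}, so that there is a threshold past which every flat letter of $b$ equals $1$.

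The key structural input is that $\pth(d)$ returns to $\vertex{0,0}$ after each inserted block $B_i$; this is exactly what the proof of Lemma~\ref{lem:dAllowed} establishes. Assuming $b$ has at least one reset, I would analyze $\pth(d)$ one return-block at a time. For all but finitely many $i$ the flat preceding the $i$-th diagonal consists entirely of $1$'s; read from $\vertex{0,0}$ such a flat stays at $\vertex{0,0}$, because $\ell \ge 2$ (forced by $\beta > 2$) provides a loop labeled $1$ there while $b_1 = \ell \ne 1$ rules out climbing. The following diagonal spells $a_1 \cdots a_{m_i}$ from $\vertex{0,0}$, i.e.\ the path $\pth(a_1^{m_i})$, whose vertices $\vtx(a_1^k) = \vertex{k,j_k}$ satisfy $j_k \le L$ since any initial segment of $b$ occurring in $a$ has length at most $L$. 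Inside $B_i = a_{m_i+1} b_1 \cdots b_{L+1} \eta$ the horizontal reset drops the second coordinate to $0$, the segment $b_1 \cdots b_{L+1}$ raises it to at most $L+1$, and the fixed finite word $\eta$ contributes a bounded amount. The finitely many remaining blocks, together with the initial flat and diagonal, form a finite prefix of $d$ over which the second coordinate takes only finitely many values. If $b$ ends in an infinite flat, its eventually-$1$ tail is read starting from $\vertex{0,0}$ immediately after the last $B_i$, where the second coordinate is $0$; because $b_1 = \ell \ne 1$ a tail of $1$'s cannot build up a long match with an initial segment of $b$, so the second coordinate stays bounded along the tail as well. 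Combining these bounds yields the desired $M$ and Corollary~\ref{cor:specset} applies.

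The one case this argument misses is when $b$ has no resets at all, i.e.\ $\pth(b)$ is a single infinite horizontal flat and $d = b$ has unbounded second coordinate. This is where I expect the main difficulty, since Corollary~\ref{cor:specset} cannot be invoked directly. I would dispatch it separately: by Lemma~\ref{lem:flatRestrictions} a horizontal flat contains no edge labeled $0$, so $b$ never uses the symbol $0 = a_1$; hence $a_1$ does not occur in $b$, the set $\diag(b)$ is empty, and since $\diag(a)$ is bounded by hypothesis Theorem~\ref{thm:specChar} shows that $\Sigma$ itself has specification. In that case $d$ lies in the specification subshift $\Sigma$ and the lemma holds trivially.
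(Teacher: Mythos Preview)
Your proof is correct and follows the same strategy as the paper: invoke Corollary~\ref{cor:specset} by showing that the second coordinate along $\pth(d)$ stays bounded, using that $\pth(d)$ returns to $\vertex{0,0}$ after each block $B_i$. The paper's argument is much terser --- it simply asserts that the maximal second coordinate along $\pth(d)$ equals the maximal second coordinate along the fixed path $\eta$ --- whereas you carry out the block-by-block accounting explicitly, make the reliance on case~\ref{b:manyDiags} overt, and separately dispatch both the finitely many initial blocks and the degenerate situation where $b$ has no resets at all (a case the paper's one-line justification does not visibly address).
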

\begin{proof}
By Corollary \ref{cor:specset} it suffices to show that $d$ does not contain arbitrarily long initial segments of $b$.
This is equivalent to $\pth(d)$ only visiting vertices in $\Gamma$ with bounded second coordinate.
That this is the case is clear from the construction of $d$.
Indeed, the largest second coordinate of a vertex visited by $\pth(d)$ is the largest second coordinate of a vertex visited by the path $\eta$ in the definition of the $B_{n_i+m_i+1}$.
\end{proof}

\begin{lemma}
\label{lem:cspec}
Either $c$ is contained in a subshift of $\Sigma$ that has specification, or $b_i=c_i$ for all $i>L+1$. 
\end{lemma}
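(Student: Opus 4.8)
The plan is to invoke Corollary~\ref{cor:specset}: it suffices to prove that if $c$ \emph{does} contain arbitrarily long initial subwords of $b$, then $b_i = c_i$ for all $i > L+1$. By Lemma~\ref{lem:graphpropone}, containing arbitrarily long initial subwords of $b$ is exactly the statement that $\pth(c)$ visits vertices of arbitrarily large second coordinate, so I would assume the latter and argue inside the graph $\Gamma$.

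First I would record the only mechanism by which the second coordinate can grow. As in the proof of Proposition~\ref{prop:specset}, for every $K > L$ the only vertex of $\Gamma$ with second coordinate $K$ is $\vtx(b_1^K)$, and this vertex lies on $\pth(b)$. Inspecting its outgoing edges via the rules \ref{edge:both}--\ref{edge:reset} gives a climb-or-crash dichotomy: from $\vtx(b_1^K)$ with $K > L$ the path $\pth(c)$ either reads the symbol $b_{K+1}$ (using \ref{edge:both} or \ref{edge:b}) and moves to second coordinate $K+1$, or it takes \ref{edge:a} or \ref{edge:reset} and its second coordinate drops to $0$. Thus the second coordinate increases only one step at a time and only when $\pth(c)$ follows the next symbol of $b$. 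Since it is assumed unbounded while every descent returns it to $0$, $\pth(c)$ must eventually track $\pth(b)$ along an infinite ascent, reading $b_1 b_2 b_3 \cdots$ consecutively. This yields $\sigma^q(c) = b$ for some $q$, and $q \ge 1$ since $c_1 = 1 \ne \ell = b_1$ rules out $q = 0$.

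The heart of the argument is then a short monotonicity observation. Flattening only lowers symbols: every flat symbol of $b$ is $\ne 0$ by Lemma~\ref{lem:flatRestrictions}, hence at least $1$, so $c_i \le b_i$ for all $i$. Combining $c_i \le b_i$ with $\sigma^q(c) = b$ gives $b_j = c_{q+j} \le b_{q+j}$ for every $j$; that is, $\sigma^q(b)$ dominates $b$ coordinatewise. On the other hand $\sigma^q(b) \preceq b$ by Lemma~\ref{lem:lexshift}. A coordinatewise inequality in one direction together with a lexicographic inequality in the other forces equality: were $\sigma^q(b) \ne b$, the first disagreement would satisfy $b_{q+k} < b_k$, contradicting $b_k \le b_{q+k}$. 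Hence $\sigma^q(b) = b$, so $b$ is periodic with period $q$, and $\sigma^q(c) = b = \sigma^q(b)$ then gives $c_i = b_i$ for all $i > q$; in particular every flat position $i > q$ has $b_i = 1$.

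The main obstacle is the last mile: upgrading ``$c_i = b_i$ for $i > q$'' to the stated threshold $i > L+1$. Here I would exploit that the positions at which $b_i = 0$ are periodic modulo $q$ (as $b$ is), so by the analysis of flat versus diagonal edges in Section~\ref{subsec:graph} the flat set is eventually periodic modulo $q$, with a pre-period controlled by the length of the longest common segment of $a$ and $b$ and hence at most $L+1$. For a flat position $r$ with $L+1 < r \le q$ this makes $r+q$ flat as well; since $r + q > q$ we already know $b_{r+q} = 1$, and periodicity $b_r = b_{r+q}$ then forces $b_r = 1$. Making the pre-period bound precise, and (a secondary point) justifying the single infinite ascent in the second paragraph rather than infinitely many finite ones—which I would handle by a compactness argument showing the right shifts of $c$ that begin with ever longer prefixes of $b$ accumulate on a genuine tail of $c$ equal to $b$—are the two places where care is required.
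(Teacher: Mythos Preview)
Your monotonicity step—combining the pointwise inequality $c_i \le b_i$ with the lexicographic bound $\sigma^q(b) \preceq b$ to force $\sigma^q(b) = b$—is a nice observation, but it rests on having $\sigma^q(c) = b$ for some fixed $q$, and that is where the argument breaks. The sentence ``since it is assumed unbounded while every descent returns it to $0$, $\pth(c)$ must eventually track $\pth(b)$ along an infinite ascent'' is a non sequitur: a second-coordinate sequence that repeatedly climbs higher and then crashes (like $0,1,0,1,2,0,1,2,3,0,\dots$) is unbounded yet has no infinite ascent. Your proposed compactness fix does not close the gap either: compactness only tells you that $b$ lies in the $\omega$-limit set of $c$, not that it equals some tail $\sigma^q(c)$. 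Nothing you have written rules out the starting positions of the long $b$-prefixes in $c$ tending to infinity, and indeed in the paper's argument they typically do. Since $\sigma^q(c) = b$ would force $b$ to be eventually periodic—strictly stronger than the conclusion $b_i = c_i$ for $i > L+1$ and not something one should expect in general—this is a genuine obstruction rather than a missing detail. Your ``last mile'' bound on the pre-period of the flat set is likewise only sketched, but it becomes moot once the infinite-ascent step fails.

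The paper takes a different route that never asserts $\sigma^q(c) = b$ or periodicity of $b$. For each $K > L$ one fixes a copy of $b_1^K$ in $c$ beginning at position $N(K)+1$. Since $b_1 = \ell \ne 1$ and every flat symbol of $c$ equals $1$, the position $N(K)+1$ must be a diagonal or reset position of $b$; the bound on $\diag(a)$ then forces it to lie within the last $L$ symbols of its diagonal. Pigeonhole fixes this offset $V \le L$ for infinitely many $K$. From position $N(K)+1$ onward the sequence $c$ has a completely explicit block structure—tail of a diagonal, a single reset symbol, then repeating blocks of the form $1^{k}\,a_1^{j}\,r$—so the equality $b_1^K = c_{N(K)+1}^{N(K)+K}$ for $K \to \infty$ writes all of $b$ past index $V+1$ in this form. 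A final lexicographic check using \eqref{eqn:sequenceorder} shows the $a$-segments in this description cannot protrude into the genuine flats of $\pth(b)$, so every flat symbol of $b$ beyond index $V+1$ equals $1$, giving $b_i = c_i$ for $i > V+1$ and hence for $i > L+1$. The key difference from your approach is that the paper exploits the explicit block structure of $c$ directly rather than attempting to identify a single tail of $c$ with $b$.
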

\begin{proof}
If $c$ is not contained in a subshift with specification then by Corollary \ref{cor:specset} it contains arbitrarily long initial segments of $b$.
For each $K > L$ fix an occurrence $(c_{N(K)+1},\dots,c_{N(K)+K})$ of the initial segment $b_1^K$ in $c$.
Since $b_1 = \ell$ we must have $c_{N(K)+1} \ne 1$ and therefore the edge corresponding to $b_{N(K)+1}$ belongs to either a diagonal of $b$ or a reset of $b$.
If that edge belongs to a diagonal of $b$ the it must be within the last $L$ edges of that diagonal because $K > L$ and we cannot have an initial segment of $b$ of length strictly greater than $L$ appearing in $a$.

By discarding some $K$ if necessary we may assume that $b_{N(K) + 1}$ is either in a reset of $b$ for all $K$ or the same distance $V$ from the end of a diagonal of $b$ for all $K$.
By then taking $K$ arbitrarily large we conclude that
\[
b
=
b_1 \cdots b_{V+1} 1^{k_1} a_1 \cdots a_{j_1} b_{l_1} 1^{k_2} a_1 \cdots a_{j_2} b_{l_2}\cdots
\]
holds.
It is possible that $k_1=0$ or that $b$ ends in infinitely many $1$'s.

A priori it may be that the segments $a_1 \cdots a_{j_i}b_{l_i}$ in the above description of $b$ overlap with the flats of $\pth(b)$.
To conclude that they do not, and therefore that all flats of $b$ after index $V+1$ are labeled by 1, it suffices to show that these segments are disjoint from the flats of $b$.
So suppose that $a_1 \cdots a_{j_i}$ appears somewhere in $b$.
Since $a_1 = 0$ and both flats and resets in $b$ are never labeled by zero, it must begin to appear in a maximal diagonal $b_{s+1} \cdots b_{s + m} = a_1 \cdots a_m$ of $b$, say at some position $b_{s+q}$ with $1 \le q \le m$.
We wish to verify that $j_i \le m-q+1$ for then our segment $a_1 \cdots a_{j_i} b_{l_i}$ will its end occurrence in $b$ before the next flat of $b$ begins.

Consider the letter $b_{s+m+1}$ that immediately follows the diagonal in which our segment begins to occur.
If $j_i > m-q+1$ then
\[
a_{m+1} < b_{s+m+1} = a_{m-q+2}
\]
whence $(a_q,\dots,a_{m+1}) \preceq (a_1,\dots,a_{m-q+1})$ contradicting the lexicographical condition \eqref{eqn:sequenceorder} that defines the language of $\Sigma$.
\end{proof}

We are now ready for the proof of Theorem~\ref{thm:mainTheorem}.
If $\good \subset \lang$ is as described in Section \ref{sec:lang}, then \ref{ct:spec} in Theorem \ref{thm:use} is satisfied by Proposition \ref{prop:specset}.
Certainly \ref{ct:bowen} is satisfied.
It remains to verify \ref{ct:pressGap}.
Let $V$ be the Bowen constant for $\phi$ and let $W=\max\{V,\nbar \phi \nbar_\infty \}$.
By Lemma~\ref{lem:sup} the inequality
\begin{equation}
\label{eqn:lastGoal}
\limsup_{n \to \infty} \frac{1}{n} (S_n \phi)(b) < \press(\phi,\lang)
\end{equation}
implies \ref{ct:pressGap}.
The proof of \eqref{eqn:lastGoal} depends on whether we are in case \ref{b:flats} or \ref{b:manyDiags}.

\subsection{Proof of Theorem~\ref{thm:mainTheorem} in case \ref{b:flats}}

Fix $\epsilon>0$ so that 
\begin{equation}
\label{eqn:pressureboundc}
\frac{1}{n} (S_n \phi)(c) < \press(\phi,\lang) - 5 \epsilon W 
\end{equation}
for sufficiently large $n$; this is possible by Lemma \ref{lem:sup} and Corollary \ref{cor:positiveentropy}.  

For each $n \in \N$ let $e_n$ be the number of letters in $b_1^n$ that are strictly larger than $1$ and correspond to edges in flats of $\pth(b_1^n)$.
By Lemma \ref{lem:cAllowed}, we can independently change these letters to $1$s and still have a word in $\lang$.
Let $\mathcal{W}$ be the set of all words that can be obtained by such edits.
We remark that $|\mathcal{W}_n| = 2^{e_n}$.

Fix $n \in \N$ and $w \in \mathcal{W}_n$.
Let $x \in \Sigma$ be any sequence with $x_1^n = w_1^n$.
When comparing $(S_n\phi)(b)$ and $(S_n\phi)(x)$ we can apply the Bowen property to the block of terms before the first edit, the block of terms after the last edit, and to the blocks of terms between consecutive edits.
Doing so, we see that the estimate
\begin{equation}
\label{eqn:bowenCedits}
|(S_n\phi)(b) - (S_n\phi)(x)|
\le
(k+1)V + 2k \nbar \phi \nbar_\mathsf{u}
\le
(3k+1)W
\end{equation}
holds where $k$ is the number of indices where $w_1^n$ and $b_1^n$ differ.

Now, we consider separately whether $e_n \ge \epsilon n$ or $e_n < \epsilon n$.
Suppose first that $e_n\ge\epsilon n$.
Then
\begin{align*}
\Lambda_n(\phi,\lang)
\ge
\Lambda_n(\phi,\mathcal{W})
&
\ge
\sum_{k=0}^{e_n} \binom{e_n}{k} e^{(S_n\phi)(b) - (3k+1)W}
\\
&
=
e^{(S_n\phi)(b)} e^{-W} \sum_{k=0}^{e_n} \binom{e_n}{k}e^{-3k}
\\
&
=
e^{(S_n\phi)(b)} e^{-W} (1+e^{-3W})^{e_n}
\end{align*}
by applying \eqref{eqn:bowenCedits} to each summand.
We obtain
\begin{equation}
\label{eqn:cequationone}
\begin{aligned}
\frac{1}{n}\log \Lambda_n(\phi,\lang)
&
\ge
\frac{1}{n} (S_n\phi)(b) - \frac{W}{n} + \frac{e_n}{n}\log(1+e^{-3W})
\\
&
\ge
\frac{1}{n} (S_n\phi)(b) - \frac{W}{n} + \epsilon\log(1+e^{-3W}) 
\end{aligned}
\end{equation}
after taking logs and dividing by $n$.

Suppose now that $e_n < \epsilon n$.
In this case we compare $(S_n \phi)(b)$ and $(S_n\phi)(c)$.
The words $b_1^n$ and $c_1^n$ differ in exactly $e_n$ places.
We obtain
\[
|(S_n\phi)(b) - (S_n\phi)(c)|
\le
(e_n+1)V + 2 e_n\nbar \phi \nbar_\mathsf{u}
\le
4e_nW
\]
by applying the Bowen property to the blocks between edits.
Consequently
\begin{equation}
\label{eqn:cequationtwo}
\frac{1}{n} (S_n\phi)(b)
\le
\frac{1}{n} (S_n\phi)(c) + \frac{4e_nW}{n}
\le
\frac{1}{n} (S_n\phi)(c) + 4 \epsilon W
\le
\press(\phi,\lang) - \epsilon W
\end{equation}
with the last inequality is an application of \eqref{eqn:pressureboundc}.

For all sufficiently large $n \in \N$ either \eqref{eqn:cequationone} or \eqref{eqn:cequationtwo} holds.
Therefore
\[
\limsup_{n \to \infty} \frac{1}{n} (S_n\phi)(b)
\le
\press(\phi,\lang) - \epsilon \min \{W, \log(1+e^{-3W}) \}
\]
giving \eqref{eqn:lastGoal}.

\subsection{Proof of Theorem~\ref{thm:mainTheorem} in case \ref{b:manyDiags}}

We will make use of the following estimate, which is a consequence of Stirling's formula.

\begin{lemma}
[{\cite[Lemma 5.4]{MR3624405}}]
\label{lem:logbinomial}
If $\delta n\le k\le \frac{n}{2}$ then $\log\binom{n}{k}\ge n\delta\log\frac{1}{\delta} - 2\log n$.
\end{lemma}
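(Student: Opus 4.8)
The plan is to reduce the estimate to two facts: a lower bound on the binomial coefficient in terms of the entropy function, and a lower bound on that entropy function over the relevant range. Throughout I write $H(p) = -p\log p - (1-p)\log(1-p)$ and set $p = k/n$, so that the hypothesis $\delta n \le k \le \tfrac{n}{2}$ becomes $\delta \le p \le \tfrac12$.

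First I would establish the bound $\binom{n}{k} \ge \tfrac{1}{n+1}\, e^{nH(p)}$ with $p = k/n$. This follows from a standard largest-term argument that sidesteps any precise use of Stirling's formula: consider the binomial distribution on $\{0,\dots,n\}$ with success probability $p = k/n$, whose mass at $j$ is $\binom{n}{j}p^j(1-p)^{n-j}$. Because $k \le \tfrac{n}{2}$ forces the mode $\lfloor (n+1)p \rfloor$ to equal $k$, the term at $j = k$ is the largest among the $n+1$ terms; since they sum to $1$, it is at least $\tfrac{1}{n+1}$. Rearranging $\binom{n}{k}p^k(1-p)^{n-k} \ge \tfrac{1}{n+1}$ and using the identity $p^k(1-p)^{n-k} = e^{-nH(p)}$ yields the claim. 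Taking logarithms gives $\log\binom{n}{k} \ge nH(p) - \log(n+1)$, and since $\log(n+1) \le 2\log n$ for $n \ge 2$ the correction is absorbed into the $-2\log n$ of the statement.

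It then remains to show $H(p) \ge \delta\log\tfrac1\delta$ for all $p \in [\delta,\tfrac12]$. Here I would differentiate: $H'(p) = \log\tfrac{1-p}{p} > 0$ for $p < \tfrac12$, so $H$ is increasing on $(0,\tfrac12)$ and attains its minimum over $[\delta,\tfrac12]$ at the left endpoint. Thus $H(p) \ge H(\delta) = \delta\log\tfrac1\delta + (1-\delta)\log\tfrac{1}{1-\delta} \ge \delta\log\tfrac1\delta$, the final inequality obtained by discarding the nonnegative second summand. Combining this with the previous paragraph gives $\log\binom{n}{k} \ge n\delta\log\tfrac1\delta - 2\log n$, as desired.

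I expect the only genuine subtlety to lie in the first step: the target permits a correction of only $2\log n$, so any crude estimate (for instance $\binom{n}{k} \ge (n/k)^k$, which degrades badly as $p \to \tfrac12$ and fails near $k = \tfrac{n}{2}$) is too lossy, and a naive appeal to Stirling's formula must be handled carefully to keep the polynomial correction of the correct order. The largest-term argument is the clean route, guaranteeing a correction of exactly $\log(n+1)$. It is worth emphasising that the restriction $k \le \tfrac{n}{2}$, which at first glance looks like a harmless normalisation, is precisely what makes both the mode computation and the monotonicity of $H$ go through.
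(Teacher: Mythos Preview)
Your proof is correct. The paper does not actually prove this lemma; it merely cites \cite[Lemma~5.4]{MR3624405} and remarks that the estimate is ``a consequence of Stirling's formula.'' Your route via the mode of the binomial distribution is a genuine alternative: it produces the clean correction $\log(n+1)$ without tracking Stirling error terms, and the monotonicity of $H$ on $(0,\tfrac12]$ handles the entropy lower bound directly. One small inaccuracy in your closing commentary: the mode computation $\lfloor (n+1)p\rfloor = k$ only requires $0 < k < n$, which already follows from $\delta n \le k$ with $\delta > 0$ and $k \le n/2$; the constraint $k \le n/2$ is genuinely needed only for the monotonicity of $H$ in your second step. This does not affect the validity of the argument.
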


Let $e_n$ be the number of resets in $b_1^n$.
Write $N = |B_{n_i+m_i+1}| - 1 =  L + 1 + |\eta|$. 
When we change every reset $b_{n_i + m_i + 1}$ in $b_1^n$ to $B_{n_i + m_i + 1}$ we get a word of length $n+N e_n$.

We want to compare $(S_n\phi)(d)$ and $(S_n \phi)(b)$.
We begin by comparing $(S_{n+Ne_n}\phi)(d)$ and $(S_n \phi)(b)$.
By Lemma~\ref{lem:cspec}, for all $i$ such that $n_i+m_i+1>L+1$ we have that the block of $b_1^n$ between resets  $b_{n_i+m_i+1}$ and $b_{m_{i+1}+n_{i+1}+1}$ is identical to the block of $d_1^{n+D_nN}$ between $B_{n_i+m_i+1}$ and $B_{m_{i+1}+n_{i+1}+1}$.  There are \(D_n+1\) such blocks and we can apply the Bowen property to these.  All other terms can be bounded above in terms of $\nbar \phi \nbar_\mathsf{u}$, giving the estimate
\begin{align*}
|(S_n\phi)(b) - (S_{n+Ne_n}\phi)(d)|
&
\le
(e_n+1)V + 2(L+1)\nbar \phi \nbar_\mathsf{u} + 2e_n \nbar \phi \nbar_\mathsf{u} + e_nN \nbar \phi \nbar_\mathsf{u}
\\
&
\le
((3+N)e_n + 2(L+1)+1)W
\\
&
\le
3Ne_n W
\end{align*}
where, in the first estimate, the first term comes from the Bowen property, the second from not knowing what $b_1^{L+1}$ looks like, the third from the $e_n$ edits changing $b_{n_i+m_i+1}$ to $a_{m_i+1}$ and the last from the $e_n$ insertions of blocks of length $N$. 
From this we get
\begin{equation}
\label{eqn:bdCompare}
\begin{aligned}
|(S_n\phi)(b) - (S_n \phi)(d)|
&
\le
|(S_n \phi)(b) - (S_{n+Ne_n} \phi)(d)| + |( S_{Ne_n} \phi)(\sigma^n(d))|
\\
&
\le
3Ne_nW + Ne_n \nbar \phi \nbar_\mathsf{u}\\
&
\le
4Ne_nW
\end{aligned}
\end{equation}
by trivially bounding the last term.

Choose by Lemma~\ref{lem:sup}, Corollary~\ref{cor:positiveentropy} and Lemma~\ref{lem:dspec} a value of $\epsilon > 0$ such that
\begin{equation}
\label{eqn:pressureboundd}
\frac{1}{n} (S_n \phi)(d) < \press(\phi,\lang) - \epsilon 5NW
\end{equation}
for all large enough $n \in \N$.

Fix $n \in \N$ large enough for \eqref{eqn:pressureboundd} to hold.
We consider separately the cases $e_n < \epsilon n$ and $e_n \ge \epsilon n$.
If $e_n < \epsilon n$ then
\begin{equation}
\label{eqn:dequationone}
\begin{aligned}
\frac{1}{n} (S_n \phi)(b)
&
\le
\frac{1}{n} (S_n\phi)(d) + \frac{4Ne_nW}{n}
\\
&
<
\frac{1}{n} (S_n\phi)(d) + \epsilon 4NW 
\\
&
<
\press(\phi,\lang) - \epsilon NW 
\end{aligned}
\end{equation}
using \eqref{eqn:bdCompare} and \eqref{eqn:pressureboundd}.

Next suppose that $e_n\ge \epsilon n$.
Write $p = \ell + 1$ and choose $0 <\delta < \frac{1}{4}$ so that 
\begin{equation}
\label{eqn:delta}
\log\left(\frac{1}{\delta}\right) > 4N(\log p + 4W)
\end{equation}
holds.
We also suppose $n$ is large enough for
\begin{equation}
\label{eqn:n}
n > \max\left\{\frac{1}{\epsilon\delta},L\right\}
\end{equation}
to hold.
In particular $\epsilon n > \frac{1}{\delta} > 1$ so there exists a positive integer $t \le e_n$ so that 
\begin{equation}
\label{eqn:t}
\epsilon n \le t \le 2\epsilon n
\end{equation}
holds.
Finally, since $\delta t > 1$ we can find $k \in \N$ with
\begin{equation}
\label{eqn:k}
\delta t < k \le 2\delta t
\end{equation}
and $k < \frac{t}{2}$ since $\delta < \frac{1}{4}$.  

Fix any collection of $t$ resets in $b_1^n$ and let $x$ be any sequence in any cylinder set corresponding to a word obtained by replacing some $k$ of these $t$ resets with thier corresponding blocks.
Such a word has length $n+kN$ and we can compare $(S_ \phi)(b)$ and $(S_{n} \phi)(x)$ to get
\begin{equation}
\label{eqn:insertionBowen}
|(S_n \phi)(b) - (S_n\phi)(x)| < 4NkW
\end{equation}
just as we compared $(S_n \phi)(b)$ and $(S_n \phi)(d)$ above.
The words $x_1^{n+kN}$ we get by choosing different subsets of resets of cardinality $k$ are all different.
Indeed, If $x_1^{n+kN}$ and $y_1^{n+kN}$ are words obtained by making different choices of $k$ replacements, then there is an earliest reset in $b_1^n$ that is edited to make one of these words but not the other.
Thus $x_1^{n+kN}$ and $y_1^{n+kN}$ are different.

We worry that the words $x_1^n$ and $y_1^n$ we get by truncating $x_1^{n+kN}$ and $y_1^{n+kN}$ may be the same for different choices of $k$ replacements.
However, since our alphabet has $p$ letters and we are cutting off $kN$ letters, there are at most $p^{kN}$ words of length $n+kN$ that truncate to any particular $x_1^n$.
Thus,
\[
\Lambda_n(\phi,\lang)
\ge
\binom{t}{k}p^{-kN} e^{(S_n\phi)(b) - 4NkW} 
\]
by applying \eqref{eqn:insertionBowen} to all words obtainable by replacing $k$ resets for all $0 \le k \le t$.

Combining with \eqref{eqn:t} and \eqref{eqn:k} gives
\begin{align*}
\log \Lambda_n(\phi,\lang)
&
\ge
t \delta \log \frac{1}{\delta} - 2\log t - kN\log p + (S_n\phi)(b) - 4NkW
\\
&
\ge
\delta \epsilon n \log \frac{1}{\delta} - 2 \log(2\epsilon n) - 4\delta \epsilon n N \log p + (S_n\phi)(b) - 16\delta\epsilon n NW
\end{align*}
after an application of Lemma~\ref{lem:logbinomial}.
We conclude that
\begin{equation}
\label{eqn:dequationtwo}
\frac{1}{n} \log \Lambda_n(\phi,\lang) \ge \frac{1}{n} (S_n\phi)(b) + \delta \epsilon \left( \log \frac{1}{\delta} - 4N(\log p + 4W) \right) - \frac{2 \log(2\epsilon n)}{n}
\end{equation}
after dividing by $n$ and regrouping.

For all $n$ large enough either \eqref{eqn:dequationone} or \eqref{eqn:dequationtwo} holds.
Therefore
\[
\limsup_{n \to \infty} \frac{1}{n} (S_n\phi)(b)
\le
\press(\phi,\lang) - \epsilon \min\left\{NW, \delta\log\frac{1}{\delta}- 4N\delta(\log p + 4W)  \right\}
\]
which establishes \eqref{eqn:lastGoal} because \eqref{eqn:delta} implies $\delta\log\frac{1}{\delta}- 4N\delta(\log p + 4W)$ is positive.

\section{An example and Theorem~\ref{thm:denseness}}
\label{sec:examples}

In this section we give exemplar $\alpha$ and $\beta$ with the property that the corresponding subshift $\Sigma$ does not have specification but does satisfy the hypothesis of Theorem~\ref{thm:mainTheorem}.
We also prove Theorem~\ref{thm:denseness}.

\subsection{An example}

Throughout this subsection we assume $\alpha \beta = 1$.
This implies $a = 0\overline{1}$ because $F(0) =  \alpha \bmod 1$ and $F^2(0) = \beta \alpha + \alpha \bmod 1 = \alpha \bmod 1$ satisfies
\[
\frac{1-\alpha}{\beta} \le \alpha < \frac{2 - \alpha}{\beta}
\]
via $\alpha \beta = 1$ and $0 \le \alpha < 1$.

Our goal is to choose $\beta > 1$ such the sequence
\begin{equation}
\label{eqn:goodSequence}
c = 32 012 0112 01112 011112 \cdots 0 1^n 2 \cdots = 32 \prod_{n=1}^\infty 01^n 2
\end{equation}
is equal to $b$.
It is natural to attempt to choose $\beta$ according to the relationship
\begin{equation}
\label{eqn:exampleBeta}
1 = \frac{c_1 - \frac{1}{\beta}}{\beta} + \frac{c_2 - \frac{1}{\beta}}{\beta^2} + \frac{c_3 - \frac{1}{\beta}}{\beta^3} + \frac{c_4 - \frac{1}{\beta}}{\beta^4} + \cdots
\end{equation}
but \eqref{eqn:exampleBeta} does not automatically imply that $b$ will have the desired form.
To see this, write $\eta$ for the map
\[
\eta(d) = \frac{d_1 - \frac{1}{\beta}}{\beta} + \frac{d_2 - \frac{1}{\beta}}{\beta^2} + \frac{d_3 - \frac{1}{\beta}}{\beta^3} + \frac{d_4 - \frac{1}{\beta}}{\beta^4} + \cdots
\]
from $\{0,\dots,\ell\}^\N \to \R$.

\begin{example}
With $\beta = 3$ we have
\[
1 = \frac{2 - \frac{1}{3}}{3} + \frac{3 - \frac{1}{3}}{3^2} + \frac{3 - \frac{1}{3}}{3^3} + \frac{3 - \frac{1}{3}}{3^4} + \cdots
\]
and yet $b = 3 \overline{1}$ for this $\beta$.
\end{example}

\begin{lemma}
\label{lem:10positive}
If $\beta > 2$ then $\eta(1 \overline{0}) > 0$.
\end{lemma}
\begin{proof}
One calculates that
\[
\eta(1 \overline{0}) = \frac{\beta^2 - 2\beta}{\beta^2(\beta - 1)}
\]
which is certainly positive when $\beta > 2$.
\end{proof}

\begin{lemma}
\label{lem:negativeCrit}
If $\beta > 2$ and $\eta(d) < 0$ then $d_1 = 0$.
\end{lemma}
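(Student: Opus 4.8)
The plan is to prove the contrapositive: assuming $d_1 \geq 1$, I will show that $\eta(d) > 0$, so that the hypothesis $\eta(d) < 0$ forces $d_1 = 0$. The whole argument rests on the fact that $\eta$ is increasing in each coordinate, so the infimum of $\eta$ over sequences with prescribed first letter is attained by taking all later letters as small as possible.

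First I would exploit this coordinatewise monotonicity. Each summand $\frac{d_n - 1/\beta}{\beta^n}$ is an increasing function of $d_n$, and every letter satisfies $d_n \geq 0$. Hence replacing $d_n$ by $0$ for all $n \geq 2$ can only decrease the total, giving $\eta(d) \geq \eta(d_1 \overline{0})$, where $d_1 \overline{0}$ denotes the sequence with first letter $d_1$ followed by all $0$'s.

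Next I would reduce to the case $d_1 = 1$. The sequences $d_1\overline{0}$ and $1\overline{0}$ agree in every coordinate except the first, where the contribution is $\frac{d_1 - 1/\beta}{\beta}$. Thus $\eta(d_1\overline{0}) = \eta(1\overline{0}) + \frac{d_1 - 1}{\beta}$, and since $d_1 \geq 1$ this is at least $\eta(1\overline{0})$. Combined with the previous step this yields $\eta(d) \geq \eta(1\overline{0})$.

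Finally I would invoke Lemma~\ref{lem:10positive}, which records that $\eta(1\overline{0}) = \frac{\beta^2 - 2\beta}{\beta^2(\beta - 1)} > 0$ for $\beta > 2$, completing the proof. I do not anticipate any real obstacle: the only point requiring care is the reduction to the extremal sequence $1\overline{0}$, and once coordinatewise monotonicity is noted the conclusion is immediate from the already-computed positivity of $\eta(1\overline{0})$.
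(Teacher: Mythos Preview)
Your proof is correct and follows essentially the same approach as the paper: both arguments replace $d_n$ by $0$ for $n\ge 2$ to obtain a lower bound for $\eta(d)$, and then observe this lower bound is positive once $d_1\ge 1$. The only cosmetic difference is that the paper carries out the resulting inequality directly to obtain $d_1 < \tfrac{1}{\beta}\bigl(1+\tfrac{1}{\beta}\bigr)$, whereas you package the same computation by invoking Lemma~\ref{lem:10positive}.
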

\begin{proof}
From $\eta(d) < 0$ we deduce
\[
\frac{d_1 - \frac{1}{\beta}}{\beta} - \left( \frac{1}{\beta^3} + \frac{1}{\beta^4} + \cdots \right)
<
0
\]
since all $x_n \ge 0$.
This gives $x_1 < \frac{1}{\beta} ( 1 + \frac{1}{\beta} )$ forcing $x_1 = 0$.
\end{proof}

\begin{lemma}
If $\beta > 3$ then $\eta(2 \overline{3}) < 1$.
\end{lemma}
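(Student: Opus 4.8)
The plan is to compute $\eta(2\overline{3})$ in closed form. I expect the answer to be exactly $\frac{2}{\beta-1}$, from which the claimed inequality is equivalent to $\beta > 3$; in fact this shows the hypothesis $\beta > 3$ is sharp. Since the sequence $2\overline{3}$ has only two distinct tails (the single leading digit $2$ and then the constant digit $3$), this is purely a geometric-series computation, so I do not anticipate any genuine obstacle beyond careful bookkeeping.

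First I would separate the leading term from the constant tail, writing
\[
\eta(2\overline{3}) = \frac{2 - \frac{1}{\beta}}{\beta} + \left(3 - \frac{1}{\beta}\right)\sum_{n=2}^{\infty}\frac{1}{\beta^n}.
\]
The tail sum converges for $\beta > 1$ and evaluates to $\frac{1}{\beta(\beta - 1)}$, so the second summand becomes $\frac{3\beta - 1}{\beta^2(\beta - 1)}$, while the first summand is $\frac{2}{\beta} - \frac{1}{\beta^2}$.

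Next I would place everything over the common denominator $\beta^2(\beta-1)$ and simplify the numerator. The key point is that the numerator collapses: the terms $-2\beta - \beta + 3\beta$ cancel and the constants cancel, leaving exactly $2\beta^2$. Hence
\[
\eta(2\overline{3}) = \frac{2\beta^2}{\beta^2(\beta - 1)} = \frac{2}{\beta - 1}.
\]

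Finally, since $\beta > 3$ gives $\beta - 1 > 2$, we obtain $\eta(2\overline{3}) = \frac{2}{\beta - 1} < 1$, as desired. The only steps requiring any care are verifying the convergence of the geometric tail and checking the numerator cancellation; both are routine, so I expect no real difficulty here, only the arithmetic.
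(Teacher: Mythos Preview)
Your proposal is correct and follows essentially the same approach as the paper: the paper simply states that one calculates $\eta(2\overline{3}) = \frac{2}{\beta-1}$, which is less than $1$ for $\beta > 3$, and you have supplied exactly that geometric-series computation in detail.
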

\begin{proof}
One calculates that $\eta(2 \overline{3}) = \frac{2}{\beta - 1}$ which is at most 1 when $\beta > 3$.
\end{proof}

\begin{lemma}
If $3 < \beta < 3.73$ and $\eta(d) > 1$ then $d_1 = 3$.
\end{lemma}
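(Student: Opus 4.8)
The plan is to prove the contrapositive: if $d_1 \ne 3$ then $\eta(d) \le 1$. The structural fact driving everything is that
$\eta(d) = \sum_{n} \frac{d_n - 1/\beta}{\beta^n}$
is a sum in which each summand depends on a single coordinate of $d$ and is strictly increasing in that coordinate. Hence, to bound $\eta$ subject to a constraint on $d_1$ alone, one may replace every later coordinate by the largest available letter, which only increases $\eta$. Concretely, since $d_n \le \ell$ for all $n$, the termwise estimate $d_n - 1/\beta \le \ell - 1/\beta$ together with absolute convergence of $\sum_n \beta^{-n}$ gives $\eta(d) \le \eta(d_1 \overline{\ell})$ for every $d$, with no interchange of limits to justify.

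First I would pin down the alphabet. With $\alpha = 1/\beta$ one has $\ell = \ceil{\beta + 1/\beta} - 1$, and the map $\beta \mapsto \beta + 1/\beta$ is increasing for $\beta > 1$. Since $\beta > 3$ gives $\beta + 1/\beta > 10/3 > 3$, while $\beta < 3.73$ gives $\beta + 1/\beta < 4$ (one checks $3.73 + 1/3.73 < 4$), we get $3 < \beta + 1/\beta < 4$ and therefore $\ell = 3$. Thus every coordinate of $d$ lies in $\{0,1,2,3\}$, so $d_1 \ne 3$ means exactly $d_1 \le 2$. This is the only place the upper bound $3.73$ is used, and it is precisely what prevents a letter $4$ from being available.

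With $\ell = 3$ established, the remaining step is short. By the coordinatewise monotonicity above, $\eta(d) \le \eta(d_1 \overline{3})$, and among the admissible first letters $0,1,2$ the first term $\frac{d_1 - 1/\beta}{\beta}$ is largest at $d_1 = 2$, so $\eta(d) \le \eta(2 \overline{3})$ whenever $d_1 \le 2$. The previous lemma evaluates $\eta(2\overline{3}) = \frac{2}{\beta - 1}$, which is strictly less than $1$ because $\beta > 3$ forces $\beta - 1 > 2$. Hence $d_1 \le 2$ implies $\eta(d) < 1$, and so $\eta(d) > 1$ forces $d_1 = 3$. I do not expect any genuine obstacle here: the only point requiring a word of care is the reduction of the infinite tail to the constant sequence $\overline{3}$, and that follows from the crude termwise bound rather than from any monotonicity of $\eta$ in the lexicographic order (which in fact fails in this range of $\beta$).
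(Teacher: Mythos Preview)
Your proof is correct and follows essentially the same approach as the paper: both arguments bound $\eta(d)$ above by replacing every coordinate past the first with the maximal letter $3$, and then observe that with $d_1 \le 2$ this upper bound is $\eta(2\overline{3}) = \tfrac{2}{\beta-1} < 1$ for $\beta > 3$. The paper's version is terser (it records the inequality $1 < \tfrac{d_1 - 1/\beta}{\beta} + \tfrac{3 - 1/\beta}{\beta^2}\cdot\tfrac{1}{1-1/\beta}$ and leaves the final deduction implicit), while you make the reduction to the previous lemma and the determination $\ell = 3$ explicit, but there is no substantive difference.
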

\begin{proof}
If $1 < \eta(d)$ then
\[
1 < \frac{d_1 - \frac{1}{\beta}}{\beta} + \frac{3 - \frac{1}{\beta}}{\beta^2} \frac{1}{1 - \frac{1}{\beta}}
\]
because all $d_n$ are at most $3$.
\end{proof}

\begin{lemma}
If $\beta > 2$ then $\eta(2 \overline{0}) > \eta(\overline{1})$.
\end{lemma}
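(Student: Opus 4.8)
The plan is to reduce the inequality to a direct computation, exploiting the fact that $\eta$ is affine in its argument with a constant offset. Writing out the definition,
\[
\eta(d) = \sum_{n=1}^\infty \frac{d_n - \tfrac{1}{\beta}}{\beta^n} = \left( \sum_{n=1}^\infty \frac{d_n}{\beta^n} \right) - \frac{1}{\beta} \sum_{n=1}^\infty \frac{1}{\beta^n},
\]
and the second sum equals $\frac{1}{\beta(\beta-1)}$ regardless of $d$. First I would record this observation: the subtracted term $\frac{1}{\beta(\beta-1)}$ does not depend on $d$, so for any two sequences the difference $\eta(d) - \eta(d')$ equals $\sum_{n=1}^\infty (d_n - d'_n)\beta^{-n}$, with the offsets cancelling. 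Convergence and term-by-term subtraction are justified by absolute convergence of the geometric series for $\beta > 1$, which certainly holds here.

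Applying this with $d = 2\overline{0}$ and $d' = \overline{1}$, the digit differences are $d_1 - d'_1 = 1$ and $d_n - d'_n = -1$ for every $n \ge 2$. Hence
\[
\eta(2\overline{0}) - \eta(\overline{1}) = \frac{1}{\beta} - \sum_{n=2}^\infty \frac{1}{\beta^n} = \frac{1}{\beta} - \frac{1}{\beta(\beta-1)} = \frac{1}{\beta} \cdot \frac{\beta - 2}{\beta - 1}.
\]
The final step is to observe that each factor on the right is positive precisely because $\beta > 2$, since then $\beta - 1 > 1 > 0$ and $\beta - 2 > 0$; thus the difference is strictly positive, which is the claim.

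There is no serious obstacle here, as the argument is entirely computational; the only point requiring a moment's care is the rearrangement of the series, handled by absolute convergence. As an alternative to the digit-difference route, one may compute $\eta(2\overline{0}) = \frac{2}{\beta} - \frac{1}{\beta(\beta-1)}$ and $\eta(\overline{1}) = \frac{1}{\beta}$ separately and subtract, arriving at the same expression $\frac{1}{\beta}\cdot\frac{\beta-2}{\beta-1}$; I prefer the digit-difference route because the constant offset never needs to be evaluated explicitly.
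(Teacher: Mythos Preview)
Your proof is correct and is essentially the same geometric-series computation the paper carries out; the paper merely packages the result by noting that $\eta(2\overline{0}) - \eta(\overline{1}) = \eta(1\overline{0})$ and then invoking the earlier lemma that $\eta(1\overline{0}) > 0$ for $\beta > 2$, which amounts to the same closed form $\dfrac{\beta-2}{\beta(\beta-1)}$ you obtained.
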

\begin{proof}
Geometric series calculations give $\eta(2 \overline{0}) - \eta(\overline{1}) = \eta(1 \overline{0})$ which is positive by Lemma~\ref{lem:10positive}.
\end{proof}

Fix now $\beta$ satisfying \eqref{eqn:exampleBeta}.
Such a $\beta$ exists by the intermediate value theorem.
Indeed, the right-hand side of \eqref{eqn:exampleBeta} is at most
\[
\eta(3\overline{2}) = \frac{3}{\beta} + \frac{1}{\beta(\beta - 1)}
\]
and, from
\[
\frac{2}{\beta^j} > \frac{1}{\beta^j} + \frac{1}{\beta^{j+1}}
\]
is at least $\eta(3\overline{1}) = \frac{3}{\beta}$.
Morevoer, these inequalities imply that $3 < \beta < 3.73$ whence $3 < \beta + \alpha < 4$.
We claim for this value of $\beta$ that $b = c$.
Certainly $b_1 = 3$.
Since $b_n = \floor{T^n(1)}$ it suffices to prove that $0 \le \eta(\sigma^n c) < 1$ for all $n \in \N$.
By Lemma~\ref{lem:negativeCrit} this can only happen for those $n$ at which $c_n = 0$.
But in all such cases
\[
\eta(\sigma^n c)
\ge
\frac{0 - \frac{1}{\beta}}{\beta} + \frac{1 - \frac{1}{\beta}}{\beta} \frac{1}{1 - \frac{1}{\beta}}
=
\frac{1}{\beta} - \frac{1}{\beta^2} > 0
\]
so we always have $\eta(\sigma^n c) > 0$.
Also
\[
\eta(\sigma^n c)
\le
\frac{2 - \frac{1}{\beta}}{\beta} + \frac{2 - \frac{1}{\beta}}{\beta^2} + \frac{3 - \frac{1}{\beta}}{\beta^3} + \frac{4 - \frac{1}{\beta}}{\beta^4} + \cdots
=
\frac{2 \beta - 1}{\beta(\beta - 1)}
<
1
\]
when $3 < \beta < 4$ so $\eta(\sigma^n c) < 1$ for all $n \in \N$.

This implies $b = c$ with $c$ as in \eqref{eqn:goodSequence}.
Since $\alpha \beta = 1$ we also have $a = 0 \overline{1}$.
In this case the set $\diag(b)$ is unbounded so Theorem~\ref{thm:specChar} implies the subshift $\Sigma$ does not have specification.
However, the set $\diag(a)$ is bounded, so the subshift does have unique equilibrium states for \Holder{} potentials via Theorem~\ref{thm:mainTheorem}.

\subsection{Proof of Theorem~\ref{thm:denseness}}

\begin{proof}
[Proof of Theorem~\ref{thm:denseness}]
For any $\alpha$ and any $\beta$, the existence of a bound on the lengths of initial subwords of $b$ that appear in $a$ is equivalent to the existence of a bound on the distance between one and points in the orbit of zero.
There will certainly be such a bound when the orbit of $0$ is periodic.

Suppose $\beta,\alpha$ are chosen so that $F^n(0)$ gets arbitrarily close to $1$.
If
\[
F^n(0)=1-\delta
\] 
for $\delta > 0$ small then 
\[
F^{n-1}(0) = q_j - \frac{\delta}{\beta}
\]
for some $j \in \{0,\dots,\ell\}$.
That is, if some point in the orbit of $0$ is very close to $1$, then the previous point in the orbit is even closer to a point of discontinuity of $F$ and lies to the left of the discontinuity.
Let $m$ be a time such that $F^m(0)$ is closer to $1$ then ever before.
Then $F^{m-1}(0)$ is closer to a discontinuity of $F$ then ever before while lying to the left of the discontinuity.
Consider $\alpha' = \alpha + \epsilon$ for $\epsilon$ small, and the corresponding transformation $F'$.
Observe that the discontinuities of $F'$ are $\epsilon/\beta$ to the left of the discontinuities of $F$, and for $n \in \{1,2,\dots,m-1\}$ the points $(F')^n(0)$ are $\epsilon(1+\beta+ \beta^2 + \cdots + \beta^{n-1})$ to the right of $F^n(0)$.
Thus, there is a particular $\epsilon$, depending on $m$ and how close $F^m(0)$ is to $1$, such that $(F')^{m-1}(0)$ is equal to a point of discontinuity of $F'$.
This implies $(F')^m(0) = 0$ and hence that the orbit of $0$ is periodic.
By choosing $m$ arbitrarily large, we get $\alpha'$ arbitrarily close to $\alpha$. 
\end{proof}

\printbibliography

\end{document}